\documentclass[10pt]{amsart}
\textwidth=14.5cm \oddsidemargin=1cm
\evensidemargin=1cm
\usepackage{amsmath}
\usepackage{amsxtra}
\usepackage{amscd}
\usepackage{amsthm}
\usepackage{amsfonts}
\usepackage{amssymb}
\usepackage{eucal}

\newtheorem{cor}[subsection]{Corollary}
\newtheorem{lem}[subsection]{Lemma}
\newtheorem{prop}[subsection]{Proposition}

\newtheorem{conj}[subsection]{Conjecture}
\newtheorem{thm}[subsection]{Theorem}


\theoremstyle{definition}

\theoremstyle{remark}

\newcommand{\thmref}[1]{Theorem~\ref{#1}}
\newcommand{\secref}[1]{Sect.~\ref{#1}}
\newcommand{\lemref}[1]{Lemma~\ref{#1}}
\newcommand{\propref}[1]{Proposition~\ref{#1}}
\newcommand{\corref}[1]{Corollary~\ref{#1}}
\newcommand{\conjref}[1]{Conjecture~\ref{#1}}

\numberwithin{equation}{section}

\newcommand{\nc}{\newcommand}
\nc{\renc}{\renewcommand}
\nc{\ssec}{\subsection}
\nc{\sssec}{\subsubsection}
\nc{\on}{\operatorname}

\nc\ol{\overline}
\nc\wt{\widetilde}
\nc\tboxtimes{\wt{\boxtimes}}
\nc{\alp}{\alpha}

\nc{\Fq}{{\mathbb F}_q}
\nc{\Fqb}{\ol{{\mathbb F}_q}}
\nc{\Ql}{\ol{{\mathbb Q}_\ell}}
\nc{\id}{\text{id}}

\nc{\Hom}{\on{Hom}}
\nc{\Lie}{\on{Lie}}
\nc{\Loc}{\on{Loc}}
\nc{\Pic}{\on{Pic}}
\nc{\Bun}{\on{Bun}}
\nc{\IC}{\on{IC}}
\nc{\Aut}{\on{Aut}}
\nc{\pos}{{\on{pos}}}
\nc{\Conv}{\on{Conv}}
\nc{\Sph}{\on{Sph}}
\nc{\Sym}{\on{Sym}}

\nc{\BunBb}{\overline{\Bun}_B}
\nc{\Buno}{\overset{\circ}{\Bun}}
\nc{\BunPb}{{\overline{\Bun}_P}}
\nc{\BunBM}{\Bun_{B(M)}}
\nc{\BunBm}{\Bun_{B^-}}
\nc{\BunBmb}{\ol{\Bun}_{B^-}}
\nc{\BunBMb}{\overline{\Bun}_{B(M)}}
\nc{\BunPbw}{{\widetilde{\Bun}_P}}
\nc{\BunBP}{\widetilde{\Bun}_{B,P}}
\nc{\GUb}{\overline{G/U}}
\nc{\GUPb}{\overline{G/U(P)}}

\nc{\oX}{\overset{\circ}{X}{}}
\nc{\oFS}{\overset{\circ}{\on{FS}}{}}
\nc{\hl}{\overset{\leftarrow}h{}}
\nc{\hr}{\overset{\rightarrow}h{}}
\nc{\D}{{\mathcal D}}
\nc{\Gr}{{\on{Gr}}}

\nc\Spec{\on{Spec}}
\nc\Mod{\on{Mod}}

\nc{\lambdach}{{\check\lambda}}
\nc{\Lambdach}{{\check\Lambda}{}}
\nc{\much}{{\check\mu}}
\nc{\omegach}{{\check\omega}}
\nc{\nuch}{{\check\nu}}
\nc{\etach}{{\check\eta}}
\nc{\alphach}{{\check\alpha}}
\nc{\betach}{{\check\beta}}
\nc{\rhoch}{{\check\rho}}


\emergencystretch=2cm

\nc{\BA}{{\mathbb{A}}}
\nc{\BC}{{\mathbb{C}}}
\nc{\BG}{{\mathbb{G}}}
\nc{\BM}{{\mathbb{M}}}
\nc{\BD}{{\mathbb{D}}}
\nc{\BN}{{\mathbb{N}}}
\nc{\BP}{{\mathbb{P}}}
\nc{\BR}{{\mathbb{R}}}
\nc{\BZ}{{\mathbb{Z}}}
\nc{\BS}{{\mathbb{S}}}
\nc{\BQ}{{\mathbb{Q}}}

\nc{\CA}{{\mathcal{A}}}
\nc{\CB}{{\mathcal{B}}}

\nc{\CE}{{\mathcal{E}}}
\nc{\CF}{{\mathcal{F}}}

\nc{\CL}{{\mathcal{L}}}
\nc{\CC}{{\mathcal{C}}}
\nc{\CM}{{\mathcal{M}}}
\nc{\CN}{{\mathcal{N}}}
\nc{\CK}{{\mathcal{K}}}
\nc{\CO}{{\mathcal{O}}}
\nc{\CP}{{\mathcal{P}}}
\nc{\CQ}{{\mathcal{Q}}}
\nc{\CR}{{\mathcal{R}}}
\nc{\CS}{{\mathcal{S}}}
\nc{\CT}{{\mathcal{T}}}
\nc{\CU}{{\mathcal{U}}}
\nc{\CV}{{\mathcal{V}}}
\nc{\CW}{{\mathcal{W}}}
\nc{\CZ}{{\mathcal{Z}}}
\nc{\CX}{{\mathcal{X}}}
\nc{\CY}{{\mathcal{Y}}}
\nc{\CI}{{\mathcal{I}}}

\nc{\csM}{{\check{\mathcal A}}{}}
\nc{\oM}{{\overset{\circ}{\mathcal M}}{}}
\nc{\obM}{{\overset{\circ}{\mathbf M}}{}}
\nc{\oCA}{{\overset{\circ}{\mathcal A}}{}}
\nc{\obA}{{\overset{\circ}{\mathbf A}}{}}
\nc{\ooM}{{\overset{\circ}{M}}{}}
\nc{\osM}{{\overset{\circ}{\mathsf M}}{}}
\nc{\vM}{{\overset{\bullet}{\mathcal M}}{}}
\nc{\nM}{{\underset{\bullet}{\mathcal M}}{}}
\nc{\oD}{{\overset{\circ}{\mathcal D}}{}}
\nc{\obD}{{\overset{\circ}{\mathbf D}}{}}
\nc{\oA}{{\overset{\circ}{\mathbb A}}{}}
\nc{\op}{{\overset{\bullet}{\mathbf p}}{}}
\nc{\cp}{{\overset{\circ}{\mathbf p}}{}}
\nc{\oU}{{\overset{\bullet}{\mathcal U}}{}}
\nc{\oZ}{{\overset{\circ}{\mathcal Z}}{}}
\nc{\oL}{{\overset{\circ}{\mathcal L}}{}}
\nc{\ofZ}{{\overset{\circ}{\mathfrak Z}}{}}
\nc{\oF}{{\overset{\circ}{\fF}}}
\nc{\oG}{{\overset{\circ}{G}}}

\nc{\fa}{{\mathfrak{a}}}
\nc{\fb}{{\mathfrak{b}}}
\nc{\fd}{{\mathfrak{d}}}
\nc{\fg}{{\mathfrak{g}}}
\nc{\fgl}{{\mathfrak{gl}}}
\nc{\fh}{{\mathfrak{h}}}
\nc{\fj}{{\mathfrak{j}}}
\nc{\fl}{{\mathfrak{l}}}
\nc{\fm}{{\mathfrak{m}}}
\nc{\fn}{{\mathfrak{n}}}
\nc{\fu}{{\mathfrak{u}}}
\nc{\fp}{{\mathfrak{p}}}
\nc{\fr}{{\mathfrak{r}}}
\nc{\fs}{{\mathfrak{s}}}
\nc{\hsl}{{\widehat{\mathfrak{sl}}}}
\nc{\hgl}{{\widehat{\mathfrak{gl}}}}
\nc{\hg}{{\widehat{\mathfrak{g}}}}
\nc{\chg}{{\widehat{\mathfrak{g}}}{}^\vee}
\nc{\hn}{{\widehat{\mathfrak{n}}}}
\nc{\chn}{{\widehat{\mathfrak{n}}}{}^\vee}

\nc{\fA}{{\mathfrak{A}}}
\nc{\fB}{{\mathfrak{B}}}
\nc{\fD}{{\mathfrak{D}}}
\nc{\fE}{{\mathfrak{E}}}
\nc{\fF}{{\mathfrak{F}}}
\nc{\fG}{{\mathfrak{G}}}
\nc{\fK}{{\mathfrak{K}}}
\nc{\fL}{{\mathfrak{L}}}
\nc{\fM}{{\mathfrak{M}}}
\nc{\fN}{{\mathfrak{N}}}
\nc{\fP}{{\mathfrak{P}}}
\nc{\fU}{{\mathfrak{U}}}
\nc{\fV}{{\mathfrak{V}}}
\nc{\fZ}{{\mathfrak{Z}}}
\nc{\fW}{{\mathfrak{W}}}

\nc{\oP}{{\overset{\circ}{\fP}}}

\nc{\bb}{{\mathbf{b}}}
\nc{\bc}{{\mathbf{c}}}
\nc{\bd}{{\mathbf{d}}}
\nc{\be}{{\mathbf{e}}}
\nc{\bj}{{\mathbf{j}}}
\nc{\bn}{{\mathbf{n}}}
\nc{\bp}{{\mathbf{p}}}
\nc{\bq}{{\mathbf{q}}}
\nc{\bu}{{\mathbf{u}}}
\nc{\bv}{{\mathbf{v}}}
\nc{\bx}{{\mathbf{x}}}
\nc{\bs}{{\mathbf{s}}}
\nc{\by}{{\mathbf{y}}}
\nc{\bw}{{\mathbf{w}}}
\nc{\bA}{{\mathbf{A}}}
\nc{\bK}{{\mathbf{K}}}
\nc{\bB}{{\mathbf{B}}}
\nc{\bU}{{\mathbf{U}}}
\nc{\bC}{{\mathbf{C}}}
\nc{\bG}{{\mathbf{G}}}
\nc{\bD}{{\mathbf{D}}}
\nc{\bH}{{\mathbf{H}}}
\nc{\bM}{{\mathbf{M}}}
\nc{\bN}{{\mathbf{N}}}
\nc{\bV}{{\mathbf{V}}}
\nc{\bW}{{\mathbf{W}}}
\nc{\bX}{{\mathbf{X}}}
\nc{\bZ}{{\mathbf{Z}}}
\nc{\bS}{{\mathbf{S}}}

\nc{\sA}{{\mathsf{A}}}
\nc{\sB}{{\mathsf{B}}}
\nc{\sC}{{\mathsf{C}}}
\nc{\sD}{{\mathsf{D}}}
\nc{\sF}{{\mathsf{F}}}
\nc{\sK}{{\mathsf{K}}}
\nc{\sM}{{\mathsf{M}}}
\nc{\sO}{{\mathsf{O}}}
\nc{\sW}{{\mathsf{W}}}
\nc{\sQ}{{\mathsf{Q}}}
\nc{\sG}{{\mathsf{G}}}
\nc{\sP}{{\mathsf{P}}}
\nc{\sZ}{{\mathsf{Z}}}
\nc{\sfp}{{\mathsf{p}}}
\nc{\sr}{{\mathsf{r}}}
\nc{\sk}{{\mathsf{k}}}
\nc{\sg}{{\mathsf{g}}}
\nc{\sff}{{\mathsf{f}}}
\nc{\sfb}{{\mathsf{b}}}
\nc{\sfc}{{\mathsf{c}}}
\nc{\sd}{{\mathsf{d}}}

\nc{\BK}{{\bar{K}}}

\nc{\tA}{{\widetilde{\mathbf{A}}}}
\nc{\tB}{{\widetilde{\mathcal{B}}}}
\nc{\tg}{{\widetilde{\mathfrak{g}}}}
\nc{\tG}{{\widetilde{G}}}
\nc{\TM}{{\widetilde{\mathbb{M}}}{}}
\nc{\tO}{{\widetilde{\mathsf{O}}}{}}
\nc{\tU}{{\widetilde{\mathfrak{U}}}{}}
\nc{\TZ}{{\tilde{Z}}}
\nc{\tx}{{\tilde{x}}}
\nc{\tbv}{{\tilde{\bv}}}
\nc{\tfP}{{\widetilde{\mathfrak{P}}}{}}
\nc{\tz}{{\tilde{\zeta}}}
\nc{\tmu}{{\tilde{\mu}}}

\nc{\urho}{\underline{\rho}}
\nc{\uB}{\underline{B}}
\nc{\uC}{{\underline{\mathbb{C}}}}
\nc{\ui}{\underline{i}}
\nc{\uj}{\underline{j}}
\nc{\ofP}{{\overline{\mathfrak{P}}}}
\nc{\oB}{{\overline{\mathcal{B}}}}
\nc{\og}{{\overline{\mathfrak{g}}}}
\nc{\oI}{{\overline{I}}}

\nc{\eps}{\varepsilon}
\nc{\hrho}{{\hat{\rho}}}

\nc{\one}{{\mathbf{1}}}
\nc{\two}{{\mathbf{t}}}

\nc{\Rep}{{\mathop{\operatorname{\rm Rep}}}}
\nc{\End}{{\mathop{\operatorname{\rm End}}}}
\nc{\Ext}{{\mathop{\operatorname{\rm Ext}}}}
\nc{\length}{{\mathop{\operatorname{\rm length}}}}
\nc{\supp}{{\mathop{\operatorname{\rm supp}}}}
\renc{\mod}{{\text{-mod}}}

\nc{\Fl}{\on{Fl}}

\nc{\reg}{{\text{\rm reg}}}

\nc\wh{\widehat}

\nc{\fq}{\mathfrak q}

\nc{\fqb}{\ol{\fq}{}}
\nc{\fpb}{\ol{\fp}{}}

\nc{\hattimes}{\wh\otimes}
\nc{\FS}{\on{FS}}

\nc{\bh}{{\bar{h}}}
\nc{\bOmega}{{\overline{\Omega(\check \fn)}}}

\nc{\seq}[1]{\stackrel{#1}{\sim}}

\nc{\oCY}{\overset{\circ}{\CY}}
\nc{\oZZ}{\overset{\circ}{Z}}

%
%
%
%

\nc{\cT}{{\check{T}}}
\nc{\cG}{{\check{G}}}
\nc{\cM}{{\check{M}}}
\nc{\cB}{{\check{B}}}

\nc{\ct}{{\check{\mathfrak t}}}
\nc{\cg}{{\check{\fg}}}
\nc{\cb}{{\check{\fb}}}
\nc{\cn}{{\check{\fn}}}

\nc{\cLambda}{{\check\Lambda}}

\nc{\cla}{{\check\lambda}}
\nc{\cmu}{{\check\mu}}
\nc{\cnu}{{\check\nu}}
\nc{\ceta}{{\check\eta}}
\nc{\crho}{{\check\rho}}

\nc{\omegah}{{\omega^{\frac{1}{2}}}}
\nc{\omegacrho}{{\omega^{\crho}}}

\nc{\imathb}{{\ol{\imath}}}

\nc{\Whit}{\on{Whit}}
\nc{\oy}{\overline{y}}
\nc{\ox}{{\overline{x}}}
\nc{\mer}{{\on{mer}}}

\nc{\ssf}{\mathsf f}
\nc{\fqbm}{\ol\fq^-}
\nc{\fpbm}{\ol\fp^-}

\nc{\Const}{\mathsf{Const}}
\nc{\ppart}{(\!(t)\!)}

\nc{\KL}{\on{KL}}

\begin{document}

\title{Twisted Whittaker model and factorizable sheaves}

\author{D.~Gaitsgory}

\address{Department of Mathematics, Harvard University, Cambridge MA 02138 (USA)}

\email{gaitsgde@math.harvard.edu}

\date{May 29, 2007; revised December 25, 2007}

\maketitle

\section*{Introduction}

\ssec{Quantum Langlands duality?}

The idea behind the paper is the quest for Langlands duality for quantum groups.
Let us explain what we mean by this. 

Let $G$ be a simple algebraic group over $\BC$.
Recall that the geometric Satake equivalence realizes the Langlands dual
group $\check{G}$, or rather the category of its representations, explicitly
in terms of $G$ as follows. We consider the affine Grassmannian $\Gr_G=
G\ppart/G[[t]]$ as an ind-scheme acted on (from the left) by $G[[t]]$ and
let $\fD\mod(\Gr_G)^{G[[t]]}$ denote the corresponding category of D-modules.

One endows $\fD\mod(\Gr_G)^{G[[t]]}$ with a monoidal structure, and one
shows that it has a natural commutativity constraint. It is a basic fact,
conjectured by Drinfeld based on an earlier work of Lusztig, and proved
in \cite{Gi} and \cite{MV}, that that
the resulting tensor category is equivalent to $\Rep(\cG)$--the category of
representations of $\cG$ as an algebraic group.

\medskip

Now, $\Rep(\cG)$, considered as a braided monoidal category, admits a 
one-parameter family of deformations to the category $\Rep(U_q(\cG))$,
where by $U_q(\cG)$ we denote the quantum group attached to $G$
(when $q$ is a root of unity, we take Lusztig's quantum group with
$q$-divided powers). It has always been very tempting to try to realize
$\Rep(U_q(\cG))$ also via $\Gr_G$, or some closely related geometric
object, as a category of D-modules or perverse sheaves with a particular
equivariance property.

One has a natural candidate of how to involve the 
parameter $q$. Namely, let $\wt\Gr_G$ be the canonical line bundle
over $G$, i.e., $\wt\Gr_G:=\widehat{G}/G[[t]]$, where $\widehat{G}$ is the
Kac-Moody extension of the loop group $G\ppart$. 
The passage from $\check{G}$ to its quantum deformation should correspond
to replacing D-modules (or perverse sheaves) on $\Gr_G$ by 
D-modules (or perverse sheaves) on $\wt\Gr_G$, which are monodromic 
along the fiber with monodromy $q^2$, i.e., we will consider the corresponding 
category of twisted D-modules on $\Gr_G$ which we will denote by 
$\fD\mod^c(\wt\Gr_G)$ with $q=exp(\pi i c)$.

However, the first naive attempt, i.e., to consider the category 
$\fD\mod^c(\wt\Gr_G)^{G[[t]]}$ leads to a wrong answer. E.g., when $c$
is irrational (i.e.,, when $q$
is not a root of unity) the latter category will have only one irreducible
object, i.e., it cannot be a deformation of $\Rep(\cG)$.

\ssec{Whittaker category}   \label{Whit trouble}

A viable candidate for an appropriate category of twisted D-modules 
on $\Gr_G$ was suggested by Jacob Lurie in October 2006. To explain
his idea, we will first replace the category $\fD\mod(\Gr_G)^{G[[t]]}$
by a different, but still equivalent, category, which, however, admits
a natural quantum deformation.

Namely, let $N$ be a maximal unipotent subgroup of $G$,
and consider the corresponding loop group $N\ppart$. Let
$\chi:N\ppart\to \BG_a$ be a non-degenerate character, normalized
to have conductor $0$. Let us consider the category of D-modules
on $\Gr_G$, equivariant with respect to $N\ppart$ against the
character $\chi$. We shall refer to this category as that of
Whittaker D-modules on $\Gr_G$, and denote it $\Whit(\Gr_G)$,
or simply $\Whit$.

The trouble is, however, that the orbits of the group $N\ppart$ on
$\Gr_G$ are
infinite-dimensional, and if one understands the $N\ppart$-equivariance
condition naively, the category $\Whit(\Gr_G)$ will be empty. There are
two ways to overcome this difficulty:

\medskip

One way is to try to define $\Whit(\Gr_G)$ as a triangulated category,
and then extract a t-structure. This approach has not been worked
out yet, but J.~Lurie and the author hope that this is feasible, and 
leads to a manageable theory.

\medskip

Another approach, which we opt for in the present paper,
was developed in \cite{FGV} and \cite{Ga}, where one
replaces $\Gr_G$ with the group $N\ppart$ acting on it, by a different 
geometric object, which is supposed to mimic its properties. This other
object is the Drinfeld compactification, denoted in this paper by $\fW$,
and it involves a choice of a projective curve $X$. In addition to being less
natural, the category $\Whit(\Gr_G)$, defined in this way, has the
main disadvantage that its local nature (we think of the ring $\BC[[t]]$
and the field $\BC\ppart$ as associated to a formal disc of a point $x$ on
a not necessarily complete curve $X$) is not obvious.

We will explain the definition of $\Whit(\Gr_G)$ via $\fW$ in \secref{surrogate}
and \secref{def cat}. For the purposes of the introduction we will pretend
that the first approach mentioned above works, i.e., that we have a direct
local definition of $\Whit(\Gr_G):=\fD\mod(\Gr_G)^{N\ppart,\chi}$.

\medskip

Having "defined" $\Whit(\Gr_G)$, the main theorem of the paper \cite{FGV}
can be rephrased as follows: there exists an equivalence of {\it abelian}
categories $\fD\mod(\Gr_G)^{G[[t]]}\simeq \Whit(\Gr_G)$. Moreover,
a naturally defined triangulated category, whose core is $\Whit(\Gr_G)$,
is semi-simple, i.e., we have an equivalence of abelian categories
\begin{equation} \label{cl equiv}
\Whit(\Gr_G)\simeq \Rep(\cG),
\end{equation}
which extends to an equivalence of the corresponding triangulated categories.

We emphasize that the latter statement regarding the original equivalence
\begin{equation} \label{satake equiv}
\fD\mod(\Gr_G)^{G[[t]]}\simeq \Rep(\cG)
\end{equation}
would be false: the $G[[t]]$-equivariant derived category of D-modules
on $\Gr_G$ is not at all semi-simple. This can be viewed as another
reason why \eqref{satake equiv} does not have a quantum deformation.

\ssec{Lurie's category}

Now we can explain Jacob Lurie's idea. By the same token as we "define"
$\Whit(\Gr_G)$, we can define its twisted version
$$\Whit^c(\Gr_G):=\fD\mod^c(\Gr_G)^{N\ppart,\chi}.$$
This makes sense since the Kac-Moody extension $\widehat{G}$ canonically 
splits over $N\ppart$. His conjecture can be stated as
\begin{conj}  \label{Lurie}
There exists an equivalence 
$$\Whit^c(\Gr_G)\simeq \Rep(U_q(\cG)),\,\, q=exp(\pi i c).$$
\end{conj}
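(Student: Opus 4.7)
The plan is to establish the equivalence indirectly through the category of factorizable sheaves, following the strategy evoked by the paper's title. On the quantum side, work of Bezrukavnikov--Finkelberg--Schechtman realizes $\Rep(U_q(\cG))$ at $q=\exp(\pi i c)$ as a category $\FS^c$ of factorizable perverse sheaves on the colored configuration spaces $X^{\cLambda^{\pos}}$ of the curve $X$, equipped with a Heisenberg-type local system attached to $c$ and the Killing form on $\cg$. This reduces \conjref{Lurie} to constructing an equivalence
\[
\Whit^c(\Gr_G) \;\simeq\; \FS^c,
\]
where both sides are now defined globally in terms of $X$.

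The functor $\Psi\colon \Whit^c(\Gr_G) \to \FS^c$ I would build via Zastava-type geometry, mirroring the template that works at $c=0$ in \cite{FGV} and \cite{Ga}. First I would equip $\fW$ with its stratification indexed by $\cLambda^{\pos}$, each stratum being fibered over the open configuration space $\oX^{\cLambda^{\pos}}$; the fibers carry the clean Whittaker condition, and the $c$-twisted character sheaf on them, tensored with the pullback of the Kac--Moody determinant gerbe, restricts to precisely the Heisenberg local system that enters the definition of $\FS^c$. For $\CF \in \Whit^c(\Gr_G)$ I would then define $\Psi(\CF)$ by restriction to these strata, with the factorization structure inherited from the compatibilities at disjoint divisors. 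This assignment is manifestly compatible with the factorization on the target.

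To prove $\Psi$ is an equivalence I would match standard generators. On the Whittaker side the natural candidates are the extensions of the Whittaker character sheaf from the strata of $\fW$ indexed by dominant coweights $\cla$; on the FS side one has the standard, costandard and irreducible factorizable sheaves $M^\cla$, $M^{-,\cla}$, $L^\cla$. The central claim is a cleanness statement: the $j_!$- and $j_*$-extensions of the twisted Whittaker character from the relevant open substack of $\fW$ agree with each other (up to shift) and compute $M^\cla$ and $M^{-,\cla}$ respectively, with their canonical map becoming $M^\cla \to M^{-,\cla}$. Granted this, $\Psi$ is t-exact, sends irreducibles to irreducibles bijectively, and preserves $\Ext$-groups because on both sides they reduce via factorization to cohomology of the same Heisenberg local system on $X^{\cLambda^{\pos}}$.

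The main obstacle is precisely this cleanness. At $c=0$ it can be obtained from semi-simplicity arguments inherited from Geometric Satake, but those are unavailable for irrational $c$ or at roots of unity. One has to perform the underlying dimension and vanishing estimates directly on the semi-infinite Iwahori orbits in $\fW$, carefully tracking the twist coming from the determinant gerbe and its interaction with the character $\chi$; the non-degeneracy of $\chi$ should suppress all cohomology away from the stratum indexed by $\cla$. A secondary difficulty arises when $q$ is a root of unity: one must ensure that $\FS^c$ matches Lusztig's integral form of $U_q(\cG)$ with divided powers, which imposes delicate vanishing along partial diagonals in $X^{\cLambda^{\pos}}$; if this lines up with the geometry of $\Psi$, the remainder of the argument is formal.
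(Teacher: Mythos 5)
Your overall strategy is exactly the one the paper follows: reduce \conjref{Lurie} to an equivalence $\Whit^c(\Gr_G)\simeq\FS^c(\cG)$, import the identification of $\FS^c$ with quantum-group representations from \cite{FS}, construct the comparison functor via Zastava geometry, and identify cleanness as the technical crux. Two points, however, are off.

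First, the functor is not obtained by ``restriction to strata'' of $\fW$. That would only produce twisted local systems over the open configuration spaces $\oX^\cmu$, and extending them to all of $X^\cmu_n$ would require exactly the cleanness you are trying to prove. Instead the paper introduces the Zastava spaces $\CZ^\cmu_n$ equipped with maps ${}'\fp^-\colon\CZ^\cmu_n\to\fW_n$ and $\pi^\cmu\colon\CZ^\cmu_n\to X^\cmu_n$, and sets $\sG_n(\CF)^\cmu:=\pi^\cmu_*\bigl({}'\fp^{-,\cmu,\cdot}(\CF)\bigr)$. The identification of the line bundles (\lemref{line bundle on Zastava}) is what makes this a well-defined functor between twisted D-module categories, the factorization of $\CZ^\cmu_n$ over $X^\cmu_n$ (\propref{factorization of Zastava}, \propref{factor of Whit}) gives the factorization isomorphisms, and perversity (\propref{perversity}) and cleanness (\thmref{cleanness}, itself deduced from cleanness of the constant sheaf on $\BunBm^\cmu\hookrightarrow\BunBmb^\cmu$, \thmref{other cleanness}) give exactness and compatibility with Verdier duality. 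Your ``restriction'' picture is the shadow of this after cleanness is known; as a \emph{definition} it is circular.

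Second, your characterization of the root-of-unity case as a ``secondary difficulty'' that can be resolved by matching $\FS^c$ to Lusztig's divided-power form is incorrect. The result of \cite{FS} identifies $\FS^c$ with modules over the \emph{small} quantum group $u_q(\cG)$, not the big one. When $q$ is not a root of unity these coincide, which is why the paper proves the conjecture only for irrational $c$ (\thmref{main}); at a root of unity the categories genuinely differ, and the paper explicitly states it does not know how to modify $\FS^c$ to see $\Rep(U_q(\cG))$. Moreover the cleanness statements themselves (\thmref{cleanness}, \thmref{other cleanness}) are \emph{false} for rational $c$, so the method breaks down there and cannot be rescued by an integrality bookkeeping on the $\FS$ side. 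The honest statement is that \conjref{Lurie} remains a conjecture in general, and the paper establishes it for irrational $c$ via $\Whit^c_n\simeq\FS^c_n$.
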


In this paper we will essentially prove this conjecture for $q$ being
not a root of unity.

\medskip

Let us note that the assertion of the above conjecture 
is inherently transcendental, due to the appearance of
the exponential function relating the parameters on both
sides. We will cure this as follows:

When $q$ is not a root of unity, i.e., when $c$ is irrational, we will eventually
replace the RHS, i.e., $\Rep(U_q(\cG))$, by another category, namely that
of factorizable sheaves, denoted $\FS^c(\cG)$, which will also be equivalent 
to $\Rep(U_q(\cG))$ by a transcendental procedure. In the present paper
we will establish an equivalence
\begin{equation} \label{Whit FS}
\Whit^c(\Gr_G)\simeq \FS^c(\cG),
\end{equation}
which is algebraic; in particular, both sides and the equivalence between
them are defined over an arbitrary ground field of characteristic $0$.

At present we do not know how to modify the definition of
$\FS^c(\cG)$ to obtain a category, equivalent to $\Rep(U_q(\cG))$ for
all $q$. \footnote{A candidate for a such a modification of $\FS^c(\cG)$
has been found when this paper was under revision in December 2007.}
There is, however, another category with this property, see \secref{KL}.

\ssec{Chiral categories}   \label{chiral categories}

Returning to the statement of \conjref{Lurie}, we should explain what are
the additional structures on both sides, which are supposed to be respected
by the conjectural equivalence. (If for generic $q$ we just kept the abelian
category structure on both sides, the statement of the conjecture would not
be very interesting, as both LHS and RHS are semi-simple categories with
naturally identified sets of irreducible objects.)

\medskip

The RHS of the equivalence, i.e.,  $\Rep(U_q(\cG))$, is a braided
monoidal category. The LHS is supposed to have another kind of structure, that
we shall call "fusion" or "chiral" category. The notion of chiral category
is a subject of another work in progress of Jacob Lurie and the author.

Unfortunately, in the present global 
definition of $\Whit^c(\Gr_G)$, the chiral category structure on it is not 
evident; therefore we do not give a formal definition in the main body of 
the paper. Let us, nonetheless, indicate it here.

\medskip

Let $X$ be a smooth curve (not necessarily complete). First, let us recall
that a chiral algebra over $X$ (see \cite{CHA}, Sect. 3.4 for the detailed 
definition) is a rule that assigns to a natural number $n$ a quasi-coherent
sheaf $\CA_n$ over $X^n$, with a certain factorization data. E.g., 
for $n=2$, we must be given an isomorphism between 
$\CA_2|_{X\times X-\Delta(X)}$ and $\CA_1\boxtimes \CA_1|
_{X\times X-\Delta(X)}$, and an isomorphism
$\CA_2|_{\Delta(X)}\simeq \CA_1$. We must be given a compatible
family of such isomorphisms for any partition $n=n_1+...+n_k$. Each 
$\CA_n$ must be endowed with an equivariant structure with respect to
the symmetric group $\Sigma_n$, and the factorization isomorphisms must
respect this structure. Finally, the collection $\CA_n$ must be endowed
with a unit, which is a map from the collection given by $\CA_{unit}^{(n)}:=
\CO_{X^n}$ to $\CA_n$.

\medskip

The definition of a chiral category is similar. A chiral
category consists of a data of categories $\CC_n$ {\it over} 
\footnote{The notion of abelian category over a scheme or stack 
can be found in \cite{Ga1}. However, for a reasonable definition of chiral 
categories, one needs to work at the triangulated level. We refer the reader
to \cite{Lu} or \cite{FG}, where the corresponding notions have been developed.}
$X^n$ defined for each $n$, endowed with a compatible family of equivalences
such as
$$\CC_2|_{X\times X-\Delta(X)}\simeq \CC_1\boxtimes \CC_1|
_{X\times X-\Delta(X)} \text{ and }
\CC_2|_{\Delta(X)}\simeq \CC_1,$$
etc, and a family of unit objects ${\bf 1}_n\in \CC_n$.

For a given chiral category and a point $x\in X$ we shall denote by
$\CC_x$ the fiber of $\CC_1$ at $x$. In most cases of interest, $\CC_x$
will depend on the formal disc around $x$ in $X$ in a functorial way.
So, one can view the notion of chiral category as that of a category
$\CC$ (thought of as $\CC_x$) endowed with an additional structure.

\medskip

Once the local definition of $\Whit^c(\Gr_G)$ becomes available,
the factorization structure of the affine Grassmannian (see \cite{MV}, Sect. 5)
would imply that $\Whit^c(\Gr_G)$ is a chiral category. With the current
non-local definition, we will consider the corresponding categories 
$\Whit^c(\Gr_G)_n$ separately, without discussing their factorization
properties.

\medskip

Assume for a moment that our ground field is $\BC$, the curve $X$ is 
$\BA^1$, and $\CC$ is a braided monoidal category. In this case one expects to have
a {\it transcendental} procedure that endows $\CC$ with a structure of
chiral category.

\medskip

Thus, the equivalence of \conjref{Lurie} should be understood as an
equivalence of chiral categories, where the chiral structure on the RHS 
is transcendental and comes from the braided monoidal structure.

\ssec{Factorizable sheaves} Let us return to the category $\FS^c(\cG)$,
mentioned before. \footnote{We should note that our definition of
the category $\FS^c(\cG)$ differs from the original one in \cite{FS}
in that we work with twisted D-modules on configuration spaces
rather than with ordinary D-modules. This allows to introduce this category
over an arbitrary curve, whereas in \cite{FS} one was 
restricted to genus $0$.} This category was introduced in a series
of works of M.~Finkelberg and V.~Schechtman in order to upgrade
to the level of an equivalence of categories earlier constructions
of various objects related to $U_q(\cG)$ as cohomology of certain 
sheaves on configuration spaces. 

By its very construction, $\FS^c(\cG)$ is a chiral category. In the present
paper we introduce the corresponding categories $\FS^c(\cG)_n$
explicitly, but we do not discuss their factorization properties, although
the latter are, in a certain sense, evident. 

The main result of \cite{FS}
can be interpreted as follows: we have an equivalence of chiral categories
\begin{equation} \label{FS equiv}
\FS^c(\cG)\simeq \Rep(u_q(\cG)), \,\, q=exp(\pi i c).
\end{equation}
Here $u_q(\cG)$ is the {\it small} quantum group, corresponding to
$\cG$, which coincides with the big quantum group $U_q(\cG)$,
when $q$ is not a root of unity, but is substantially different when it is.
(The latter fact is responsible for our inability to pass from the category
$\Whit^c(\Gr_G)$ to $\Rep(U_q(\cG))$ for all $q$.) 

In \eqref{FS equiv} the RHS acquires a chiral category structure from
the monoidal category structure via the procedure mentioned in
\secref{chiral categories}.

\medskip

The main result of the present paper, \thmref{main}, is that we have an
equivalence for $c\notin \BQ$:
$$\Whit^c(\Gr_G)_n\simeq \FS^c(\cG)_n$$
for every $n$. This should be interpreted as an equivalence of chiral
categories 
\begin{equation} \label{Lurie FS}
\Whit^c(\Gr_G)\simeq \FS^c(\cG),
\end{equation}
thereby proving
\conjref{Lurie} for irrational $c$.

\ssec{Kazhdan-Lusztig equivalence}   \label{KL}

Let us now mention another equivalence of categories, namely that of \cite{KL}.
For a simple group $G_1$ and an invariant form $\kappa_1:\fg_1\otimes \fg_1\to \BC$
consider the category of representations of the corresponding
affine Kac-Moody algebra $\widehat{\fg}_1$,
denoted $\widehat{\fg}_1\mod_{\kappa_1}$. Let us denote by $\KL^{\kappa_1}(G_1)$
the subcategory of $\widehat{\fg}_1\mod_{\kappa_1}$, consisting of representations,
on which the action of the Lie subalgebra $\fg_1[[t]]\subset \widehat{\fg}_1$ integrates
to an action of the group $G_1[[t]]$.

\medskip

Let us write $\kappa_1=\frac{c_1-\check{h}_1}{2\check{h}_1}\cdot
\kappa_{Kil(\fg_1)}$, where $\check{h}_1$ is the dual Coxeter number of $\fg_1$ and
$Kil(\fg_1)$ is the Killing form. Let as assume that $c_1\notin \BQ^{\geq 0}$. 

In \cite{KL} it was shown that $\KL^{\kappa_1}(G_1)$ can be endowed with a structure
of braided monoidal category, and us such it is equivalent to the category
$\Rep(U_{q}(G_1))$, where $q$ and $\kappa_1$ are related by the following formula
$q=exp(\frac{\pi i}{c_1d_1})$ for $c_1$ as above, and where $d_1$ is the ratio
of the squares of lengths of the shortest and longest roots in $\fg_1$.

\medskip

This result is also natural to view in the language of chiral categories. Namely,
both $\widehat{\fg}_1\mod_{\kappa_1}$ and $\KL^{\kappa_1}(G_1)$ 
are naturally chiral categories. We expect that the monoidal structure on 
$\KL^{\kappa_1}(\fg_1)$, defined in \cite{KL}, comes from a chiral
category structure by the procedure mentioned in \secref{chiral categories}.
Thus, the equivalence of \cite{KL}
\begin{equation}  \label{KL equiv} 
\KL^{\kappa_1}(G_1) \simeq \Rep(U_{q}(G_1))
\end{equation}
should be understood in the same framework as \eqref{FS equiv} and \conjref{Lurie}:
it is an equivalence of chiral categories, where the corresponding structure 
on the RHS comes from the braided monoidal structure.

\medskip

Note that we can combine \eqref{FS equiv} and \eqref{KL equiv}, bypassing
the quantum group altogether. We propose:
\begin{conj}  \label{KL FS}
For $c_1\notin \BQ$ we have an equivalence
of chiral categories
$$\KL^{\kappa_1}(G_1) \simeq \FS^{\frac{1}{c_1d_1}}(G_1).$$
This equivalence is algebraic, i.e., exists over an arbitrary ground
field of characteristic $0$.
\end{conj}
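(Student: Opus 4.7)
The plan is to factor the desired equivalence through a twisted Whittaker category on the Langlands dual affine Grassmannian, combining \thmref{main} of this paper with an independent quantum Langlands input at the Kac-Moody side. Applying \thmref{main}, i.e., the equivalence \eqref{Lurie FS}, with $G$ taken to be $\cG_1$ (so that the Langlands dual appearing on the right is $G_1$) and with twisting parameter $c = 1/(c_1 d_1)$, we obtain an algebraic equivalence of chiral categories
$$\Whit^{1/(c_1 d_1)}(\Gr_{\cG_1}) \simeq \FS^{1/(c_1 d_1)}(G_1).$$
Note that $c_1\notin\BQ$ implies $1/(c_1d_1)\notin\BQ$, so \thmref{main} applies. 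Consequently, to prove \conjref{KL FS} it suffices to construct an algebraic equivalence of chiral categories
$$\KL^{\kappa_1}(G_1) \simeq \Whit^{1/(c_1 d_1)}(\Gr_{\cG_1}),$$
matching the Kac-Moody level $\kappa_1$ on $G_1$ with the twisting parameter $1/(c_1 d_1)$ on the Langlands dual side. This is precisely the shape of a local quantum geometric Langlands equivalence at a non-critical level.

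I would construct this second equivalence by a twisted affine Skryabin-type functor: for $M\in\KL^{\kappa_1}(G_1)$, take its semi-infinite (Whittaker) cohomology with respect to $(\fn_1\ppart,\chi)$ at a moving point $x\in X$, and identify the resulting factorizable object with a twisted D-module on $\Gr_{\cG_1}$ via quantum geometric Satake. The structural compatibility I would exploit is that the Hecke action of $G_1[[t]]$-equivariant twisted D-modules on $\Gr_{G_1}$ at level $\kappa_1$ on the left-hand side is intertwined with the Hecke action of $\cG_1[[t]]$-equivariant twisted D-modules on $\Gr_{\cG_1}$ at twisting $1/(c_1 d_1)$ on the right. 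I would then verify that the functor is an equivalence on generators: Weyl modules $V_\lambda^{\kappa_1}$, for $\lambda$ dominant in $G_1$, should go to the standard Whittaker sheaves labeled by $\lambda$ viewed as a coweight of $\cG_1$, and the computation of $\Hom$-spaces as semi-infinite cohomology, for $c_1\notin\BQ^{\geq 0}$, should collapse to exactly the combinatorial answer that governs $\FS^{1/(c_1 d_1)}(G_1)$ through \eqref{FS equiv}.

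The main obstacle is precisely this second step: constructing the equivalence $\KL^{\kappa_1}(G_1)\simeq\Whit^{1/(c_1d_1)}(\Gr_{\cG_1})$ is substantially deeper than \thmref{main}. Making it rigorous requires first a genuine \emph{local} definition of $\Whit^c(\Gr_{\cG_1})$ of the kind flagged in \secref{Whit trouble} (so that the output of the semi-infinite cohomology functor actually lives in the target), and second, careful control of the convergence and of the twist in the semi-infinite cohomology complex. Once the functor is built algebraically, its compatibility with the chiral category structure should be formal, since on both sides the chiral structure is controlled by the factorization of formal discs on $X$ and the Skryabin construction proceeds pointwise with respect to these discs. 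The resulting equivalence $\KL^{\kappa_1}(G_1)\simeq\FS^{1/(c_1 d_1)}(G_1)$ is then algebraic, as each of the two inputs in the composition is algebraic.
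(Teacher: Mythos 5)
The statement you are addressing is a \emph{conjecture}: the paper does not prove it. Gaitsgory explicitly writes that \conjref{KL FS} ``is currently the subject of a work in progress,'' and the conjecture is motivated, not proven, by composing the two \emph{transcendental} equivalences \eqref{FS equiv} (Bezrukavnikov--Finkelberg--Schechtman) and \eqref{KL equiv} (Kazhdan--Lusztig) through $\Rep(U_q(G_1))$, with the nontrivial assertion being that the composite admits an \emph{algebraic} realization. There is therefore no paper proof to compare your argument against.

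Your reduction step is logically fine but circular with respect to this paper. After applying \thmref{main} with $G = \cG_1$, you correctly note that it remains to establish
$$\KL^{\kappa_1}(G_1) \;\simeq\; \Whit^{1/(c_1 d_1)}(\Gr_{\cG_1}).$$
But this \emph{is} \conjref{Lurie KL}, which the paper itself poses as a conjecture and derives \emph{from} \conjref{KL FS} together with \thmref{main} -- i.e.\ the opposite direction of the implication you want. So you have reduced one open conjecture of the paper to another (equivalent, given \thmref{main}) open conjecture of the paper without advancing either. The sketch you give for the missing equivalence -- a twisted affine Skryabin/semi-infinite cohomology functor, controlled by quantum geometric Satake -- is a reasonable heuristic, but, as you acknowledge yourself, it presupposes a local definition of $\Whit^c(\Gr)$ that the paper explicitly says it does not have (\secref{Whit trouble}), and it leaves the convergence and twist bookkeeping in the semi-infinite complex unaddressed. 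Those are precisely the difficulties that make \conjref{Lurie KL} a conjecture rather than a theorem. In short: this is a re-parametrization of the open problem, not a proof, and the actual mathematical gap is the entirety of the second equivalence.
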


Let us note that neither of \eqref{FS equiv}, \eqref{KL equiv} or \conjref{KL FS}
involves Langlands duality. In fact, it appears that \conjref{KL FS} is not so
far-fetched, and is currently the subject of a work in progress.

\ssec{Combining the equivalences}

Let us now combine the discussion in \secref{KL} with \conjref{Lurie}. As will be 
explained in \secref{parameter c}, it is more natural to replace the parameter
$c$ in the definition of $\Whit^c(\Gr_G)$ by an invariant form
$\kappa:\fg\otimes \fg\to \BC$, related to $c$ by
$\kappa=\frac{c-\check{h}}{2\check{h}}\cdot \kappa_{Kil(\fg)}$. 

\medskip

Let $\kappa$ and $\check\kappa$ be invariant forms on $\fg$ and $\cg$,
respectively, related as follows: the forms
$B_{\fh}:=\kappa+\frac{1}{2}\cdot \kappa_{Kil(\fg)}|_{\fh}$ and $B_{\check\fh}:=
\check\kappa+\frac{1}{2}\cdot  \check\kappa_{Kil(\check\fg)}|_{\check\fh}$ 
are non-degenerate and satisfy
$$B_{\check\fh}=B_{\fh}^{-1}.$$
This makes sense, since the Cartan subalgebras $\fh\subset \fg$ and 
$\check\fh\subset \cg$ are mutually dual vector spaces. Assume,
in addition, that the corresponding scalar $\check{c}$ is not in 
$\BQ^{\geq 0}$. (Note that the scalars
$c$ and $\check{c}$ are related by $\check{c}=\frac{1}{cd}$.)

Combining \conjref{KL FS} and \eqref{Lurie FS} we propose:
\begin{conj} \label{Lurie KL}
There exists an equivalence of chiral categories
$$\Whit^c(\Gr_G)\simeq \KL^{\check\kappa}(\cG).$$
\end{conj}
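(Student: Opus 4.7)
The plan is to deduce \conjref{Lurie KL} by composing the two equivalences already in play: \thmref{main} of the present paper, and \conjref{KL FS}, using the category $\FS^c(\cG)$ of factorizable sheaves on the dual group as the bridge. Concretely, \thmref{main} gives an algebraically defined equivalence of chiral categories $\Whit^c(\Gr_G)\simeq \FS^c(\cG)$ for $c\notin\BQ$, while \conjref{KL FS} applied to $G_1=\cG$, $\kappa_1=\check\kappa$, and corresponding scalar $c_1=\check c$ yields $\KL^{\check\kappa}(\cG)\simeq \FS^{1/(\check c\,\check d)}(\cG)$. Composing these would give the desired equivalence, provided the parameters on the two occurrences of $\FS^{(-)}(\cG)$ agree.

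The parameter match is a short computation. The excerpt records the relation $\check c = 1/(cd)$ as a consequence of the normalization $B_{\check\fh}=B_\fh^{-1}$ of the dual invariant forms. Langlands duality for root systems exchanges long and short roots via $\alpha\mapsto 2\alpha/|\alpha|^2$, and a direct verification shows that the ratio of squared lengths of short to long coroots in $\cg$ equals that for roots in $\fg$, i.e.\ $\check d = d$. Substituting, $1/(\check c\,\check d) = c$, as required, so the two instances of $\FS^{(-)}(\cG)$ coincide on the nose and the composition yields an equivalence of chiral categories. Some care is needed about the numerical hypothesis: \thmref{main} requires $c\notin\BQ$, whereas \conjref{Lurie KL} only stipulates $\check c\notin\BQ^{\geq 0}$; since $d\in\BQ$, the former is strictly stronger, and one would either restrict to that setting or need a sharpening of \thmref{main} or of \conjref{KL FS} to partially rational parameters to recover the full statement.

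The main obstacle, by a wide margin, is \conjref{KL FS} itself, which the excerpt describes only as ongoing work. On one side $\FS^{\check c}(\cG)$ is defined via twisted D-modules on configuration spaces of the curve $X$ and is factorizable by construction; on the other, $\KL^{\check\kappa}(\cG)$ is defined via $\widehat{\cg}$-representations integrable under $\cG[[t]]$, whose chiral structure arises from localization over the Ran space. Producing an algebraic functor between these two geometric incarnations and verifying compatibility with all factorization data is the substantive content; a natural route would be a localization-type functor from $\widehat{\cg}\mod_{\check\kappa}$ landing in twisted D-modules on the affine Grassmannian of $\cG$, followed by a passage to factorizable sheaves in the spirit of the constructions of the present paper. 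Until that step is completed, \conjref{Lurie KL} remains conditional, even in the irrational range where both \thmref{main} and the chain above are otherwise functional.
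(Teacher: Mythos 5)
Your proposal reproduces exactly the paper's own reasoning: the statement is labeled a \emph{conjecture} precisely because the paper derives it only conditionally, by composing the equivalence $\Whit^c(\Gr_G)\simeq\FS^c(\cG)$ of \thmref{main} with \conjref{KL FS}, and the paper's sentence ``Combining \conjref{KL FS} and \eqref{Lurie FS} we propose'' makes this explicit. Your parameter check ($\check d = d$, hence $1/(\check c\,\check d)=c$ given $\check c = 1/(cd)$) is correct, and your observation that the composition only functions for $c$ irrational while the conjecture is asserted in the wider range $\check c\notin\BQ^{\geq 0}$ matches the paper's own remark immediately after the conjecture statement.
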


Note that unlike \eqref{Lurie FS} and \conjref{KL FS}, the above \conjref{Lurie FS}
is supposed to hold even for rational (but non-negative) values of $c$.

\ssec{Relation to quantum geometric Langlands correspondence}

We are now going to put the assertion of \conjref{Lurie KL} into the framework
of quantum geometric Langlands correspondence, as was proposed by 
B.~Feigin, E.~Frenkel and A.~Stoyanovsky (see \cite{Sto}), motivated
by an earlier work of Feigin and Frenkel on the duality of W-algebras. 

\medskip

Let $\kappa$ and $\check\kappa$ be as above. For a global curve $X$,
consider the stacks $\Bun_G$ and $\Bun_{\cG}$ of principal bundles
on $X$ with respect to $G$ and $\cG$, respectively,
along with the
corresponding derived categories of twisted D-modules:
$D(\fD\mod{}^c(\Bun_G))$ and $D(\fD\mod{}^{\check{c}}(\Bun_G))$. In 
{\it loc. cit.} the following equivalence was proposed:

\begin{conj} \label{Sto}
$$D(\fD\mod{}{}^{-c}(\Bun_G))\simeq D(\fD\mod{}{}^{\check{c}}(\Bun_{\cG}))$$.
\end{conj}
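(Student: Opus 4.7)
The plan is to deduce \conjref{Sto} from \conjref{Lurie KL} by globalizing the local chiral-category equivalence from a formal disc to the whole curve $X$, and then passing to ``chiral sections'' on $\Bun_G$ and $\Bun_{\cG}$. This relies on the chiral-category framework sketched in \secref{chiral categories}: each chiral category $\CC$ should possess a chiral homology $\int_X \CC$, a triangulated category built out of the $\CC_n$, and one expects $D(\fD\mod^{-c}(\Bun_G))$ and $D(\fD\mod^{\check c}(\Bun_{\cG}))$ to be precisely the chiral homologies of the Whittaker chiral category $\Whit^{-c}(\Gr_G)$ and the Kazhdan-Lusztig chiral category $\KL^{\check\kappa}(\cG)$, respectively. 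Given such a presentation, \conjref{Lurie KL} would produce the desired equivalence by the functoriality of chiral homology.

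First I would address the $\cG$ side: exhibit $D(\fD\mod^{\check c}(\Bun_{\cG}))$ as the chiral homology of $\KL^{\check\kappa}(\cG)$ via the localization functor $\Loc: \KL^{\check\kappa}(\cG)_n\to \fD\mod^{\check c}(\Bun_{\cG})$ attached to $n$-tuples of marked points on $X$. For $\check c\notin \BQ^{\geq 0}$ this functor is exact, and one must show that as $n$ varies the resulting images generate the target; this is the quantum analogue of the Beilinson-Drinfeld localization theorem. Second I would address the $G$ side: construct a Whittaker integral transform from $D(\fD\mod^{-c}(\Bun_G))$ to a global category built out of the $\Whit^{-c}(\Gr_G)_n$, using the Drinfeld compactification of $\Bun_N$ and its exponential sheaf (in the spirit of the definitions alluded to in \secref{Whit trouble}), and establish conservativity. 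The sign reversal $c\mapsto -c$ on this side reflects the twist inversion that accompanies averaging against a non-degenerate character of $N\ppart$. Third, I would glue these two chiral-homology presentations using \conjref{Lurie KL} at each level $n$, and verify compatibility with the factorization isomorphisms that package the $\CC_n$ into a chiral category.

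The main obstacle is not the local equivalence --- that is precisely the content of \conjref{Lurie KL} --- but the globalization step. On the $\cG$ side, the desired presentation of $D(\fD\mod^{\check c}(\Bun_{\cG}))$ as chiral homology of $\KL^{\check\kappa}(\cG)$ is a quantum version of a Beilinson-Drinfeld localization result; at critical level this is a deep theorem, but for generic non-critical $\check{c}$ it is open and presumably requires substantial new input. On the $G$ side, establishing conservativity of the Whittaker reduction at the triangulated level is equally serious: the abelian-level statement underlying \eqref{cl equiv} is essentially the theorem of \cite{FGV}, and the twisted version raises additional difficulties concerning monodromic sheaves on stacks of infinite type. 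Finally, the very foundations --- a definition of chiral homology for chiral categories matching $\fD\mod^c$ on $\Bun_G$ --- are part of the work in progress of the author with Lurie referenced in the introduction and are not yet available. The route above should therefore be read as a roadmap contingent on substantial further machinery, rather than as an argument that can be completed with the tools developed in the present paper.
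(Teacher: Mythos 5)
This statement is a conjecture in the paper, not a theorem: it is the quantum geometric Langlands equivalence, attributed to Feigin, Frenkel and Stoyanovsky (\cite{Sto}), and the paper supplies no proof. What the paper contributes is \conjref{loc glob}, a \emph{compatibility} statement between \conjref{Sto} and \conjref{Lurie KL}: it asserts that the functors $\on{Poinc}$ and $\on{Loc}$ intertwine the two conjectural equivalences. It does not claim, and does not need, that one conjecture implies the other. Your proposal should be read against that backdrop.

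Your roadmap --- realize $D(\fD\mod^{-c}(\Bun_G))$ and $D(\fD\mod^{\check c}(\Bun_{\cG}))$ as chiral homologies of the local chiral categories $\Whit$ and $\KL$, and then apply \conjref{Lurie KL} pointwise and glue --- is a reasonable way one might hope to \emph{deduce} \conjref{Sto} from \conjref{Lurie KL}, and you are right that this is how the diagram in \conjref{loc glob} would be exploited. But it involves two ingredients that are strictly stronger than anything asserted in the paper: (i) that $\on{Loc}$ (a quantum Beilinson--Drinfeld localization at non-critical level) is a chiral-homology presentation, i.e.\ not merely a functor but one realizing the global category as a colimit of the local ones; and (ii) the dual statement for $\on{Poinc}$, which requires a conservativity/generation result for Whittaker averaging at the triangulated level. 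Neither is available, and \conjref{loc glob} only says the square commutes, which by itself gives no equivalence on the bottom row. You flag all of this candidly, so the write-up is honest; but it is a roadmap contingent on machinery the paper explicitly defers, not a proof, and the paper itself makes no attempt at one.

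One small slip: you write that $D(\fD\mod^{-c}(\Bun_G))$ should be the chiral homology of $\Whit^{-c}(\Gr_G)$, but the paper's $\on{Poinc}$ goes from $\Whit^{c}$ into $D(\fD\mod^{-c}(\Bun_G))$ (the sign flip is in the functor, not in the source category; see the footnote in the introduction). To line up with \conjref{Lurie KL}, which relates $\Whit^{c}(\Gr_G)$ to $\KL^{\check\kappa}(\cG)$, you want the chiral homology of $\Whit^{c}$ to land in $\fD\mod^{-c}(\Bun_G)$, so the source category should carry the superscript $c$, not $-c$.
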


It is supposed to degenerate to the "usual" geometric Langlands
equivalence
$$D(\fD\mod(\Bun_G))\simeq D(\on{QCoh}(LocSys_{\cG}))$$
as $c\to 0$ and therefore $\check{c}\to \infty$.

\medskip

We would like to propose yet one more conjecture expressing the compatibility
between \conjref{Lurie KL} and \conjref{Sto}. Let us fix points $x_1,...,x_n$
on the curve. We have the categories $\Whit^c(G)_{x_i}$ and $\KL^{\check\kappa}(\cG)_{x_i}$
attached to each of these points.

We claim that there exists a natural functor
$$\on{Poinc}:\Whit^c(G)_{x_1}\times...\times \Whit^c(G)_{x_n}\to 
D(\fD\mod{}^{-c}(\Bun_G)).$$
This is a geometric analog of the Poincar\'e series operator in the theory
of automorphic functions; 
\footnote{The fact that the functor $\on{Poinc}$ flips the sign of $c$
is related to a certain choice we make when we define the category
$\Whit^c(G)$, see \secref{det line}.}
i.e., the adjoint operator to that associating to
an automorphic function its Whittaker model.  
When we interpret the categories 
$\Whit^c(G)_{x_i}$ by the second method adopted in this paper 
(see \secref{Whit trouble}), 
the functor $\on{Poinc}$ corresponds to the direct image by means of the morphism
of stacks $\fW\to \Bun_G$.

\medskip

In addition, there exists a naturally defined functor
$$\on{Loc}:\KL^{\check\kappa}(\cG)_{x_1}\times...\times
\KL^{\check\kappa}(\cG)_{x_n}\to D(\fD\mod{}^{\check{c}}(\Bun_{\cG})).$$
Namely, given objects $V_i\in \KL^{\check\kappa}(\cG)_{x_i}$,
the fiber of $\on{Loc}(V_1,...,V_n)$ at a $\cG$-bundle $\fF_{\cG}$
is given by
$$H_\bullet(\cg^{\fF_{\cG}}_{out},V_1\otimes...\otimes V_n),$$
where $\cg^{\fF_{\cG}}_{out}$ is the Lie algebra of sections 
of the associated bundle with the adjoint representation over
the punctured curve $X-\{x_1,...,x_n\}$.

\medskip

We propose:

\begin{conj}    \label{loc glob}
The diagram of functors
$$
\CD
\Whit^c(G)_{x_1}\times...\times \Whit^c(G)_{x_n}  @>{\text{\conjref{Lurie KL}}}>>
\KL^{\check\kappa}(\cG)_{x_1}\times...\times
\KL^{\check\kappa}(\cG)_{x_n} \\
@V{\on{Poinc}}VV    @V{\on{Loc}}VV    \\
D(\fD\mod{}^{-c}(\Bun_G))  @>{\text{\conjref{Sto}}}>>  
D(\fD\mod{}^{\check{c}}(\Bun_{\cG}))
\endCD
$$
commutes.
\end{conj}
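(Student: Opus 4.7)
The plan is to reduce this global statement to a factorization-theoretic compatibility between chiral-homology functors. Both $\on{Poinc}$ and $\on{Loc}$ arise as the $\{x_1,\dots,x_n\}$-specializations of $n$-point \emph{factorizable} functors over $X^n$: the functor $\on{Poinc}$ is the direct image along the canonical projection $\fW \to \Bun_G$ from the Drinfeld compactification discussed in \secref{Whit trouble}, and $\fW$ itself fibers over $X^n$ in a factorizable fashion; the functor $\on{Loc}$ admits a chiral-homology description in terms of the Kac--Moody vertex algebra at level $\check\kappa$ on the curve $X$. Once one grants that \conjref{Lurie KL} holds as an equivalence of chiral categories, the problem of commutativity of the square is \emph{universal} in the points $x_i$, and can be restated as a compatibility between two factorization functors attached to the two sides of \conjref{Sto}.

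My first concrete step would be to route the square through the categories of factorizable sheaves, using the main theorem of the present paper (the algebraic equivalence $\Whit^c(\Gr_G)\simeq \FS^c(\cG)$) together with \conjref{KL FS}. This replaces both vertical arrows by composition with global integration functors $\FS^c(\cG) \to D(\fD\mod{}^{-c}(\Bun_G))$ and $\FS^{\check c}(\cG) \to D(\fD\mod{}^{\check c}(\Bun_{\cG}))$. These integration functors admit clean descriptions in terms of twisted D-modules on configuration spaces on $X$ and their direct image to $\Bun_G$ and $\Bun_{\cG}$, so that the problem becomes visibly intrinsic to the quantum Langlands picture at the given dual pair of levels.

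Next, I would anchor a deformation argument at the classical limit $c \to 0$, $\check c \to \infty$. In this limit, \conjref{Lurie KL} degenerates to the composite of the geometric Satake equivalence \eqref{satake equiv} with the equivalence \eqref{cl equiv}, while \conjref{Sto} specializes to the usual geometric Langlands correspondence. The resulting commutativity is essentially the assertion that Poincar\'e series of Whittaker sheaves are Hecke eigensheaves with eigenvalue given by the Langlands parameter, a known consequence of the work of Frenkel--Gaitsgory--Vilonen. One would then seek to propagate the compatibility to nonzero $c$ by a rigidity argument: the space of natural transformations between the two composite functors in the diagram should be a locally free module of rank one over the parameter $c$, reducing the problem to identification of units on a single anchor object.

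The principal obstacle is two-fold. First, \conjref{Sto} is itself open, so strictly speaking \conjref{loc glob} must either presuppose a specific construction of the quantum geometric Langlands equivalence or be used to \emph{characterize} it; I would adopt the latter viewpoint, treating \conjref{loc glob} together with \conjref{Sto} as a compatibility package that uniquely specifies the equivalence on the image of $\on{Loc}$. Second, propagating commutativity away from the classical limit is delicate: while $\Whit^c$ and $\FS^c$ are manifestly algebraic in $c$, the chiral structure on $\KL^{\check\kappa}(\cG)$ inherited from the Kac--Moody vertex algebra degenerates at the critical level, and one must carefully control monodromy in $c$ over the punctured parameter space in order to conclude that the isomorphism at $c=0$ spreads out to a global algebraic isomorphism of functors.
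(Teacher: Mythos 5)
The statement you are attempting to prove is \conjref{loc glob}, which the paper explicitly labels a \emph{conjecture} and does not prove. It is stated in the introduction as a proposed compatibility between two other open conjectures, \conjref{Lurie KL} and \conjref{Sto}, and the paper offers no argument for it beyond the heuristic that Poincar\'e series and localization should intertwine with the (conjectural) quantum geometric Langlands equivalence. There is therefore no ``paper's own proof'' against which your proposal can be checked; any purported proof would be new mathematics well beyond the scope of the paper.

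Taking the proposal on its own terms, the main gap is the deformation step. You propose to anchor at the classical limit $c\to 0$ and propagate by a ``rigidity argument'' claiming the space of natural transformations between the two composites is locally free of rank one in $c$. That claim is unsupported and is in fact the hard content: the two vertical functors land in derived categories of twisted D-modules, which are large non-semisimple categories in which there is no a priori finiteness of $\Hom$'s that would force rigidity. Moreover the classical limit is itself a degeneration rather than a specialization: as $c\to 0$ the dual parameter $\check c\to \infty$, and \conjref{Sto} does not specialize to a D-module/D-module statement but to the equivalence $D(\fD\mod(\Bun_G))\simeq D(\on{QCoh}(LocSys_{\cG}))$, so the right-hand side of your square changes type in the limit. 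The chiral structure on $\KL^{\check\kappa}(\cG)$ also degenerates at the critical level, as you note, so there is no honest family over the $c$-line along which one could spread out an isomorphism of functors. Finally, your first step replaces $\KL$ by $\FS^{\check c}$ via \conjref{KL FS}, and replaces the Whittaker side by $\FS^c$ via \thmref{main}, but then the bottom arrow becomes an equivalence between global D-module categories that is still the full strength of \conjref{Sto}; routing through factorizable sheaves does not remove the dependence on an unproven input. The honest status, which you partially acknowledge, is that \conjref{loc glob} should be read as a \emph{constraint} that any construction of the \conjref{Sto} equivalence ought to satisfy, not as something provable from the results established in this paper.
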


\ssec{Acknowledgments}

Jacob Lurie, whom the main idea of the present paper belongs to,
decided not to sign it in the capacity of author. I would like to express
my gratitude to him for numerous discussions directly and indirectly 
related to the contents of the present paper, as well as a lot of other work 
in progress. 

\medskip

I would also like to express my gratitude to M.~Finkelberg for patient and
generous explanations of the contents of \cite{FS}, which this paper is
based on.

\medskip

The ideas related to quantum geometric Langlands correspondence,
that center around \conjref{loc glob}, have received a crucial impetus 
from communications with A.~Stoyanovsky and from 
discussions with A.~Braverman and E.~Witten.

\medskip

I would also like to thank A.~Beilinson, R.~Bezrukavnikov, E.~Frenkel and 
D.~Kazhdan for useful and inspiring discussions.

\section{Overview}

In this section we shall explain the technical contents of this paper,
section by section.

\ssec{Conventions}

Throughout the paper we work over an arbitrary algebraically closed
ground field of characteristic zero; $X$ will be a smooth projective
curve. We denote by $\omega$ the canonical line bundle on $X$.

\medskip

In the main body of the paper $G$ will be a reductive group with
$[G,G]$ simply connected. We choose a Borel subgroup $B\subset G$,
its opposite $B^-\subset G$ and identify the Cartan quotient
$T:=B/N$ with $B\cap B^-$.

\medskip

By $\cLambda$ we will denote the 
coweight lattice, and by $\Lambda$ its dual--the weight lattice; by $\langle,\rangle$
we will denote the canonical pairing between the two. By 
$\cLambda^{+}$ (resp., $\Lambda^{+}$) we will denote the semi-group
of dominant coweights (resp., weights). By $\Delta$ (resp., $\Delta^+$)
we shall denote the set if roots (resp., positive roots); by $\CI$ we shall
denote the set of vertices of the Dynkin diagram; for $\imath\in \CI$
we let $\alpha_\imath$ (resp., $\check\alpha_\imath$) denote the
corresponding simple root (resp., coroot).
By $\cLambda^{pos}$ (resp., $\Lambda^{pos}$)
we shall denote the positive span of simple co-roots (resp., roots). For $\cla_1,\cla_2\in
\cLambda$ we shall say that $\cla_1\geq \cla_2$ if $\cla_1-\cla_2\in \cLambda^{pos}$.

\medskip

We choose once and for all
a square root $\omegah$ of the canonical bundle $\omega$. For a half-integer
$i$, by $\omega^i$ we will mean $(\omegah)^{\otimes 2i}$. We let
$\omegacrho$ denote the $T$-bundle induced by means of
$2\crho:\BG_m\to T$ from $\omegah$.

\medskip

For  an ind-scheme (or strict ind-stack) by a D-module on it
we shall mean a D-module  supported on some closed
subscheme (or substack).

\ssec{\secref{twisted Whittaker}}  \label{surrogate}

This section is devoted to the surrogate definition of the Whittaker
category $\Whit^c$ using a complete curve $X$. The idea is the following.

Let us think of the field $\BC\ppart$ (resp., the ring $\BC[[t]]$ ) as 
the local field (ring) of a point $x\in X$. Let $N_{out}$ be the 
group-subscheme of $N\ppart$ consisting of maps $(X-x)\to N$.
By construction, the character $\chi:N\ppart\to \BG_a$ is trivial
when restricted to $N_{out}$, so whatever $(N\ppart,\chi)$-equivariant
D-modules are, they should give rise to D-modules on the quotient
$N_{out}\backslash \Gr_G$.

Although $N_{out}\backslash \Gr_G$ makes sense as a functor
on the category of schemes, it is {\it not} a kind of algebraic stack, on which
one can define D-modules directly. However, one can cure this pretty
easily, by embedding it into another object, denoted in this paper
by $\fW_{x}$, the latter being a strict ind-stack and D-modules
on it make sense.

Explicitly, $N_{out}\backslash \Gr_G$ classifies the data of a 
principal $G$-bundle $\fF_G$ over $X$, endowed with a reduction
to $N$ over $X-x$. The stack $\fW_{x}$ replaces the word "reduction"
by "generalized reduction" or "Drinfeld structure". The stacks classifying
these generalized reductions are known as Drinfeld's compactifications;
they are studied in detail, e.g., in \cite{FFKM} or \cite{BG}.

\medskip

Our surrogate Whittaker category $\Whit_x$ is introduced as a category of
D-modules on $\fW_{x}$ that satisfy a certain equivariance condition,
which is supposed to restore the $(N\ppart,\chi)$-equivariance on
$N_{out}\backslash \Gr_G$. Moreover, this equivariance condition
forces objects of $\Whit_x$ to have zero stalks and co-stalks away
from $N_{out}\backslash \Gr_G$. So, our $\Whit_x$ is the "right" 
replacement for $\fD\mod(\Gr_G)^{N\ppart,\chi}$, the only disadvantage
being that the geometric object, on which it is realized, i.e.,  
$N_{out}\backslash \Gr_G$, depends on the choice of the global curve $X$.

The above definition generalizes in a straightforward way to the
case when instead of one point $x$ we have an $n$-tuple $\ox$
of points $x_1,...,x_n$. We obtain a category $\Whit_{\ox}$. One
can prove, but in a somewhat ad hoc way, that the category
$\Whit_{\ox}$ is equivalent to the tensor product 
$\Whit_{x_1}\otimes...\otimes \Whit_{x_n}$. The non-triviality
of the latter comparison is the expression of the non-locality of
our definition of $\Whit_x$.

In the main body of the paper we allow the $n$-tuple $\ox$ vary along
$X^n$; and we will consider the appropriate category of D-modules,
denoted $\Whit_n$. \footnote{Deviating slightly from the notation
pertaining of chiral categories introduced in \secref{chiral categories}, 
our $\Whit_n$ will not be a category over $X^n$, but rather the category 
consisting of objects in the corresponding chiral category, endowed with 
a connection along $X^n$.}

\medskip

The twisted version $\Whit^c_x$ (resp., $\Whit^c_{\ox}$, $\Whit^c_n$) is
defined as follows. By construction, the stack $\fW_x$
(resp., $\fW_{\ox}$, $\fW_n$) is endowed with a forgetful map to
the moduli stack of principal $G$-bundles over $X$, denoted $\Bun_G$.
Instead of ordinary D-modules over $\fW_x$ we consider D-modules,
twisted by means of the $-c$-th power of the pull-back of the determinant line 
bundle over $\Bun_G$. (In the main body of the paper our conventions differ
from those in the introduction, in that for $G$ simple the twisting
parameter $c$ is scaled by $2\check{h}$, i.e., we use as a basic
pairing $\fg\otimes \fg\to \BC$ the Killing form vs. the minimal
integral form.) 

The equivariance condition that singles the Whittaker category among 
all twisted D-modules on $\fW_x$ makes sense, just as it did in the
untwisted case. 

\ssec{\secref{FS cat}}

In this section we review the category of factorizable 
sheaves, introduced in \cite{FS}. 

Let us fix an $n$-tuple of points $\ox$ on our curve $X$. For a coweight $\cmu$,
let $X_{\ox}^\cmu$ denote the space of $\cLambda$-valued divisors on $X$
of total degree $\cmu$,
which are required to be anti-effective away from $\on{supp}(\ox)$. I.e.,
a point of $X_{\ox}^\cmu$ is an expression $\underset{k}\Sigma\, \cmu_k\cdot y_k$,
where $y_k\in X$ are pairwise distinct and $\Sigma\, \cmu_k=\cmu$, and such that
for $y_k\neq x_i$, the corresponding coweight $\cmu_k$ belongs to 
$-\cLambda^{pos}$, i.e., is in the
span of simple coroots with coefficients in $\BZ^{\leq 0}$.

One introduces a line bundle $\CP_{X_{\ox}^\cmu}$ over $X_{\ox}^\cmu$,
which has a local nature, i.e., the fiber of $\CP_{X_{\ox}^\cmu}$ at a point
$\underset{k}\Sigma\, \cmu_k\cdot y_k$ as above is the tensor product of
lines $(\omega^{\frac{1}{2}}_{y_k})^{\otimes N(\cmu_k)}$, 
where $\omega^{\frac{1}{2}}_{y_k}$ denotes the fiber of $\omega^{\frac{1}{2}}$
at $y_k$, and $N(\cmu_k)=(\cmu_k,\cmu_k+2\crho)_{Kil}$. 

\medskip

By definition, a factorizable sheaf consists of a data of a twisted D-module $\CL^\cmu$,
{\it defined for each} $\cmu$, where the twisting is by the $c$-th power of 
$\CP_{X_{\ox}^\cmu}$. The twisted D-modules $\CL^\cmu$
for different $\cmu$ are related by factorization isomorphisms. To explain
what these are it would be easier to pass to the analytic topology. Thus, let
$^1\bU$ and $^2\bU$ be two non-intersecting open subsets of $X$,
such that $\ox\subset {}^1\bU$; let $\cmu=\cmu_1+\cmu_2$. 

We can consider the open subset 
$$^1\bU^{\cmu_1}_{\ox}\times {}^2\bU^{\cmu_2}_\emptyset\subset
X^{\cmu_1}_{\ox}\times X^{\cmu_2}_\emptyset.$$
It admits a natural \'etale map to $X_{\ox}^\cmu$. We require that
the pull-back of $\CL^\cmu$ by means of this map decomposes as a 
product of $\CL^{\cmu_1}$ along the first factor, times a {\it standard}
twisted D-module, denoted $\CL^{\cmu_1}_\emptyset$, along the first
factor.

Thus, the "behavior" of each $\CL^\cmu$ near a point $\underset{k}\Sigma\, 
\cmu_k\cdot y_k$ is the tensor product over $\cmu_k$ of the "behaviors"
of $\CL^{\cmu_k}$ near $\cmu_k\cdot y_k\in X^{\cmu_k}_{\ox}$, and the
latter is pre-determined, unless $y_k$ coincides with one of the $x_i$'s.

\medskip

In this way one obtains a category that we denote by $\wt{\FS}{}^c_\ox$. 
One singles out the desired subcategory $\FS^c_\ox\subset \wt{\FS}{}^c_\ox$
by imposing a condition on the singular support.

\medskip

In \cite{FS} it is shown that the category $\FS^c_\ox$ of factorizable sheaves (when
$\ox=\{x\}$) is equivalent to that of representations of the small quantum
group $u_q(\cG)$, where the lattice $\cLambda$, being the set of coweights
of $G$, plays the role of the weight lattice for $\cG$.

\ssec{\secref{Zast}} 

In this section we show how to construct a functor from $\Whit^c_\ox$ to $\FS^c_\ox$,
i.e., how to pass from twisted D-modules on stacks $\fW_\ox$ to twisted
D-modules on configurations spaces $X^\cmu_\ox$, $\cmu\in \cLambda$.

The idea goes back to the construction of the Satake homomorphism
from the spherical Hecke algebra to the algebra of functions on the 
Cartan subgroup of $\cG$, and whose geometric analog was used in 
\cite{MV} to construct the fiber functor
in the usual case of $\fD\mod(\Gr_G)^{G[[t]]}$. Namely, the $\cla$-weight
space is recovered as cohomology of the orbit of the group $N\ppart$ (or
$N^-\ppart$, as the two are $G[[t]]$-conjugate) passing through the point 
$t^\cla\in \Gr_G$. 

We would like to do the same for $\Whit^c=\fD\mod^c(\Gr_G)^{N\ppart,\chi}$,
namely take cohomology along the orbits of $N^-\ppart$. The latter makes
sense (at least non-canonically), as the line bundle $\CP_{\Gr_G}$ trivializes
over $N^-\ppart$-orbits.

However, in our context, for each $\cla$ we want to obtain not just one vector
space, but a twisted D-module over $X_x^\cla$, with the factorization property.
The idea is to repeat the above procedure of taking cohomology along
$N^-\ppart$-orbits in the family parameterized by $X_x^\cla$. Such families
for all $\cla$ are provided by Zastava spaces, and they are well-adapted
for our definition of $\Whit^c$.

\medskip

Zastava spaces were introduced in \cite{FFKM}. By definition, a point of the
Zastava space $\CZ^\cmu_\ox$ is
a point of $\fW_\ox$, i.e., a $G$-bundle, endowed with a generalized reduction
to $N$ away from $\on{supp}(\ox)$, and additionally, with a reduction to
$B^-$ of degree $\cmu$ (up to a shift by $(2g-2)\cdot \crho$), defined everywhere,
which at the generic point of $X$ is transversal to the given
reduction to $N$. The measure of global non-transversality of the two reductions
is given by a point of $X^\cmu_\ox$, i.e., we have a map $\pi:\CZ^\cmu_\ox\to
X^\cmu_\ox$. 

It is a basic observation of \cite{FFKM} (see also \cite{BFGM}) that the space
$\CZ^\cmu_\ox$ {\it factorizes} over $X^\cmu_\ox$. Namely, for a point
$\cmu_k\cdot y_k\in X^{\cmu_k}_{\ox}$, the fiber of $\pi$ over it is a product
$\underset{k}\Pi\, \CZ^{\cmu_k}_{loc,y_k}$, where each 
$\CZ^{\cmu_k}_{loc,y_k}$ is a subscheme of the affine Grassmannian
$\Gr_G=G\ppart/G[[t]]$ (here $t$ is a local parameter at $y_k$)
that depends only on $\cmu_k$.

\medskip

Another crucial observation is that the line bundle on $\CZ^\cmu_\ox$, 
equal to the pull-back (from $\fW_\ox$ of the pull back) of the determinant
line bundle on $\Bun_G$, is canonically the same as the pull-back by
means of $\pi$ of the line bundle $\CP_{X_{\ox}^\cmu}$ over $X_{\ox}^\cmu$.
Therefore, we have a direct image functor between the corresponding categories
of twisted D-modules.

This allows us to construct the desired functor $\sG:
\Whit^c_\ox\to \FS^c_\ox$. Namely, starting from $\CF\in \Whit^c_\ox$,
we let the $\cmu$-component $\CL^\cmu$ of $\sG(\CF)$ to be the
direct image under $\pi$ of the pull-back of $\CF$ from $\fW_\ox$
to $\CZ^\cmu_\ox$. In order to show that $\CF\mapsto \{\CL^\cmu\}$
defined in this way is indeed a (reasonable) functor $\Whit^c_\ox\to \FS^c_\ox$
we need to check several things.

\medskip

First, in \propref{factor of Whit},
we establish a factorization property of twisted D-modules on $\CZ^\cmu_\ox$
obtained by pull-back from twisted D-modules on $\fW_\ox$ that belong to
$\Whit^c_\ox$. For an object $\CF\in \Whit^c_\ox$, its restriction to the
fiber of $\pi$ over a point of $X^\cmu_\ox$, identified by the above with
a product $\underset{k}\Pi\, \CZ^{\cmu_k}_{loc,y_k}$, decomposes as an
external product $\underset{k}\boxtimes\,\CF^{\cmu_k}_{loc,y_k}$.

This insures, among the rest, that although the forgetful map
$\CZ^\cmu_\ox\to \fW_\ox$ is not in general smooth, the pull-back functor
applied to twisted D-modules that belong to $\Whit^c$ does not produce higher
or lower cohomologies.

\medskip

The next step consists of analyzing the direct image with respect to the morphism
$\pi:\CZ^\cmu_\ox\to X^\cmu_\ox$. \propref{perversity} insures that this
operation does not produce higher or lower cohomology either. Moreover,
\thmref{cleanness} states that when $c\notin \BQ$,
the direct image coincides with the direct image with compact supports.

Finally, we need to establish that the system
$\{\CL^\cmu\}$ satisfies the required factorization property. The fact that
it satisfies {\it some} factorization property (i.e., one, where the standard 
twisted D-module 
$\CL^{\cmu}_\emptyset$ is replaced by a certain $'\CL^{\cmu}_\emptyset$)
is an immediate corollary of \propref{factor of Whit}, mentioned above. The
fact that $'\CL^{\cmu}_\emptyset\simeq \CL^{\cmu}_\emptyset$ is one of the
main technical points of the present paper and is the content of \thmref{F0}.

\ssec{Sections \ref{proofs A} and \ref{sect F0}}

These two sections are devoted to the proofs of \propref{perversity}
and \thmref{F0}.

The proof of \propref{perversity} has two ingredients. One is the
estimate on the dimension of the fibers of the map
$\pi:\CZ^\cmu_\ox\to X^\cmu_\ox$, which amounts to estimating
the dimensions of the schemes $\CZ^{\cmu}_{loc}$. The second
step consists of showing that 
the lowest degree cohomology in
\begin{equation} \label{c est}
H(\CZ^\cmu_{loc,y},(\CF_\emptyset)^\cmu_{loc}) 
\end{equation}
vanishes, where
$\CF_\emptyset$ is the "basic" object of $\Whit$. This amounts to
a calculation that follows from \cite{FGV}, Proposition 7.1.7 coupled with
\cite{BFGM}, Prop. 6.4.

\medskip

\thmref{F0} is concerned with the direct image under $\pi$ of the 
pull-back of the basic object $\CF_\emptyset$ to 
$\CZ^\cmu_\emptyset$ (we denote the resulting twisted D-module
on $X^\cmu_\emptyset$ by $'\CL^\cmu_\emptyset$). We have to
identify it with the standard twisted D-module $\CL^\cmu_\emptyset$.

Part (1) of the theorem asserts that this identification exists 
away from the diagonal divisor on 
$X^\cmu_\emptyset$. This amounts to identifying the cohomology
$H(\BG_m,\chi\otimes \Psi(c))$ with $\BC$, where $\chi$ is the Artin-Schreier
D-module on $\BG_a$, and $\Psi(c)$ is the Kummer D-module on 
$\BG_m$, corresponding to the scalar $c$.

Part (2) of the theorem asserts that $'\CL^\cmu_\emptyset$ is the
Goresky-MacPherson extension of its restriction to the complement of the
diagonal divisor. This is true only under the assumption that $c$
is irrational. The latter amounts to two things: one is the essential
self-duality of $'\CL^\cmu_\emptyset$, which follows from \thmref{cleanness};
the other is the assertion that for $\cmu$, not equal to the negative of
one of the simple co-roots, the cohomology \eqref{c est} vanishes
also in the sub-minimal degree, allowed by dimension considerations.
This is done by a direct analysis.

\ssec{\secref{proof of cleanness}} In this section we prove
\thmref{cleanness}, which states that the functor $\sG:\Whit_n^c\to \FS^c_n$
essentially commutes with Verdier duality. More precisely, we prove
that for a twisted D-module on $\CZ^\cmu_n$, obtained as a
pull-back of an object $\CF\in \Whit^c_n\subset \fD\mod^c(\fW_n)$,
its direct image onto $X^\cmu_n$ under $\pi$ equals the direct
image with compact supports.

\medskip
 
To prove this fact we introduce a compactification $\BunBmb^\cmu$
of the stack $\Bun_{B^-}^\cmu$ along the fibers of the projection
$\Bun_{B^-}^\cmu\to \Bun_G$, by allowing the reduction of a $G$-bundle
to $B^-$ to degenerate to a Drinfeld structure. 

Whereas $\CZ^\cmu_n$ was an open sub-stack of the fiber product
$\fW_n\underset{\Bun_G}\times \Bun_{B^-}^\cmu$, we define 
$\ol\CZ^\cmu_n$ to be the corresponding open sub-stack of
$\fW_n\underset{\Bun_G}\times \BunBmb^\cmu$. The map
$\pi:\CZ^\cmu_n\to X^\cmu_n$ extends to a map 
$\ol\pi:\ol\CZ^\cmu_n\to X^\cmu_n$. The main observation
is that the map $\ol\pi$ is proper.

\medskip

Let $\jmath^-$ denote the open embedding $\Bun_{B^-}^\cmu\hookrightarrow
\BunBmb^\cmu$, and denote by $'\jmath^-$ the base-changed map
$\CZ^\cmu_n\hookrightarrow \ol\CZ^\cmu_n$. \thmref{cleanness} is an
easy corollary of another result, \thmref{thm cleanness}, that states
that for $\CF\in \Whit^c_n$, its pull-back to $\CZ^\cmu_n$ is {\it clean} with
respect to $'\jmath^-$, i.e., the direct image $'\jmath^-_*$ equals 
$'\jmath^-_!$.

\medskip

We deduce \thmref{thm cleanness} from \thmref{other cleanness} that
states the "constant" twisted D-module on $\Bun_{B^-}^\cmu$ is clean
with respect to $\BunBmb^\cmu$. The latter theorem is proved by a 
word-for-word repetition of the calculation of the intersection cohomology
sheaf on $\BunBmb^\cmu$, performed in \cite{BFGM}.

\ssec{\secref{equivalence}} In this section we finish the proof
of the fact that the functor $\sG:\Whit^c_n\to \FS^c_n$ is an 
equivalence.

We first show that the functor $\sG$ induces an equivalence
for a fixed set of pole points $\ox=x_1,...,x_n$:
\begin{equation} \label{open equiv}
\Whit^c_\ox\to \FS^c_\ox.
\end{equation}

This essentially reduces to the fact that the category $\Whit^c_\ox$
is semi-simple for $c$ irrational. 

\medskip

The final step is to show that the equivalences \eqref{open equiv} 
glue together as $\ox$ moves along $X^n$. The essential ingredient
here is \thmref{cleanness} that asserts that the functor $\sG$ is
essentially Verdier self-dual.

\section{The twisted Whittaker category}   \label{twisted Whittaker}

\ssec{}

For $n\in \BZ^{\geq 0}$ we consider the $n$-th power of $X$, and
the corresponding version of the Drinfeld compactification, denoted
$\fW_n$ over $X^n$. By definition, $\fW_n$ is the ind-stack that
classifies the following data:

\begin{itemize}
 
\item An $n$-tuple of points $x_1,...,x_n$ of $X$.

\item A $G$-bundle $\fF_G$ on $X$ (For a dominant weight
$\lambda$ we shall denote by $\CV^\lambda_{\fF_G}$ the vector
bundle associated with the corresponding highest weight
representation.)

\item For each dominant weight $\lambda$ a non-zero map
$$\kappa^\lambda:\omega^{\langle \lambda,\crho\rangle}\to \CV^\lambda_{\fF_G},$$
which is allowed to have poles at $x_1,...,x_n$. The maps $\kappa^\lambda$
are required to satisfy the Pl\"ucker relations (see \cite{BG}, Sect. 1.2.1).

\end{itemize}

Let $\fp$ denote the natural forgetful map $\fW_n\to \Bun_G$. When $n=0$ we shall
use the notation $\fW_\emptyset$, or sometimes simply $\fW$.

\ssec{}    \label{det line}

Let $\CP_{\Bun_G}$ be the determinant line bundle on $\Bun_G$.
We normalize it so that the fiber over $\fF_G\in \Bun_G$
is 
$$\det R\Gamma(X,\fg_{\fF_G})\otimes \left(\underset{\alpha\in \Delta}\bigotimes \det 
R\Gamma\left(X,\omega^{\langle \alpha,\crho\rangle}\right)\right)^{\otimes -1}
\bigotimes
\biggl(\det R\Gamma\left(X,\CO\right)\biggr)^{-\dim({\mathfrak t})},$$
where ${\mathfrak t}$ is the Cartan subalgebra of $G$.
(The second and third factors are lines that do not depend on the point of
$\Bun_G$; the reason
for introducing them will become clear later.)

\medskip

Let $\CP_{\fW_n}$ denote the {\it inverse} of the 
pull-back of $\CP_{\Bun_G}$  to $\fW_n$ by means of
$\fp$. For a scalar we shall denote by $\fD\mod^c(\fW_n)$ the category
of $"(\CP_{\fW_n})^{\otimes c}"$-twisted D-modules on $\fW_n$.  
When $c=0$ (or, more generally, when $c$ is an integer) this category is 
canonically equivalent to $\fD\mod(\fW_n)$.

\ssec{}  \label{good at y}

We are now going to introduce a full subcategory of $\fD^c\mod(\fW_n)$,
denoted $\Whit_n^c$. When $c=0$, this is the Whittaker category of
\cite{FGV}; for an arbitrary $c$ the definition is not much different.
As the definition follows closely \cite{FGV}, Sect. 6.2 and \cite{Ga}, Sect. 4
we shall omit most of the proofs and the refer the reader to {\it loc. cit.}.

\medskip

Fix a point $y\in X$ let $\CB_{y}^{\reg}$ (resp., $\CB_{y}^{\mer}$)
be the group (resp., group ind-scheme) of automorphisms of
the $B$-bundle induced by means of $T\to B$ from $\omegacrho$
over the formal disc $\D_y$ (resp., formal punctured disc $\D^\times_y$) around $y$. 
This group is non-canonically isomorphic to $B(\CO_{y})$
(resp., $B(\CK_{y})$), where $\CO_{y}$ (resp., $\CK_{y}$)
is the completed local ring (resp., field) at $y$.

Let $\CN_{y}^{\reg}\subset \CB_{y}^{\reg}$ (resp., $\CN_{y}^{\mer}\subset \CB_{y}^{\mer}$)
be the kernel of the natural homomorphism $\CB_{y}^\reg\to T(\CO_{y})$
(resp., $\CB_{y}^\mer\to T(\CK_{y})$). Note that 
$$\CN_{y}^{\mer}/[\CN_{y}^{\mer},\CN_{y}^{\mer}]\simeq 
\underset{r\text{ times}}{\underbrace{\omega|_{\D^\times_y}\times...\times \omega|_{\D^\times_y}}},$$
where $r$ is the semi-simple rank of $G$. Taking the residue along each component
we obtain a canonical homomorphism $\chi_y:\CN_{y}^{\mer}\to \BG_a$.

\medskip

Fix a non-empty collection of distinct points $\oy:=y_1,...,y_m$, and set 
$\CN^\reg_{\oy}$ (resp., $\CN^\mer_{\oy}$) to be the product of the corresponding groups
$\CN_{y_j}^{\reg}$ (resp., $\CN_{y_j}^{\mer}$). We shall denote by
$\chi_{\oy}$ the corresponding homomorphism $\CN^\mer_{\oy}\to \BG_a$.

Consider an open substack $\left(\fW_n\right)_{\text{good at }\oy}$
of $\fW_n$ corresponding to the condition that the points $x_1,...,x_n$ 
stay away from $\oy$, and the maps $\kappa^\lambda$ are injective
on the fibers over $y_j$, $j=1,...,m$ (this is equivalent to asking that $\kappa^\lambda$
be an injective bundle map on a neighborhood of these points). 

Note that a point of this substack defines a $B$-bundle over each $\D_{y_j}$, such that the induced $T$-bundle is $\omegacrho|_{\D_{y_j}}$.
We define a $\CN^\reg_{\oy}$-torsor over $\left(\fW_n\right)_{\text{good at }\oy}$ 
that classifies the data as above plus an additional choice of 
identification $\beta_{y_j}$ of this $B$-bundle with 
$B\overset{T}\times \omegacrho|_{\D_{y_j}}$,
which is compatible with the existing identification of the corresponding $T$-bundles. 

Let us denote the resulting stack by $_{\oy}\fW_n$. The standard re-gluing construction
equips $_{\oy}\fW_n$ with an action of the group ind-scheme $\CN^\mer_{\oy}$
(see \cite{FGV}, Sect. 3.2 or \cite{Ga}, Sect. 4.3). 

\medskip

Let $\CP_{_{\oy}\fW_n}$ be the pull-back of the line bundle
$\CP_{\fW_n}$ to $_{\oy}\fW_n$.

\begin{lem}
The action of
$\CN^\mer_{\oy}$ on $_{\oy}\fW_n$ naturally lifts to an action on 
$\CP_{_{\oy}\fW_n}$.
\end{lem}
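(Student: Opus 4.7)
The plan is to reduce the assertion to the canonical splitting of the Kac--Moody central extension over the loop group of a maximal unipotent subgroup. By construction, $\CP_{{}_{\oy}\fW_n}$ is the pullback, along the composite ${}_{\oy}\fW_n\to \left(\fW_n\right)_{\text{good at }\oy}\to \Bun_G$, of the inverse of the determinant line bundle $\CP_{\Bun_G}$. The $\CN^\mer_{\oy}$-action on ${}_{\oy}\fW_n$ is defined by re-gluing the $G$-bundle $\fF_G$ at the points $y_j$ using the trivializations $\beta_{y_j}$; upon forgetting the latter, it descends to the standard Hecke-type modification action of $\CN^\mer_{\oy}\subset G(\CK_{\oy})$ on $\Bun_G$ supported at $\oy$. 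So it suffices to lift this latter action to $\CP_{\Bun_G}$.

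Second, I would invoke the standard dictionary between line bundles on $\Bun_G$ and central extensions of the loop group: in a formal uniformization presentation of $\Bun_G$ near a given $\fF_G$, the obstruction to lifting the $G(\CK_{\oy})$-action on $\Bun_G$ to $\CP_{\Bun_G}$ is precisely the restriction to $\CN^\mer_{\oy}$ of the central extension of $G(\CK_{\oy})$ classifying $\CP_{\Bun_G}$. This central extension is a non-zero multiple of the basic Kac--Moody extension; the normalizing factors $\det R\Gamma(X,\omega^{\langle\alpha,\crho\rangle})^{\otimes -1}$ and $\det R\Gamma(X,\CO)^{-\dim \mathfrak{t}}$ in the definition of $\CP_{\Bun_G}$ are, by the paper's own remark, fixed lines independent of the point of $\Bun_G$, and therefore have no bearing on the lifting problem.

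The third and key step is to exhibit the splitting of this central extension over $\CN^\mer_{\oy}$. At the Lie algebra level, the cocycle at each $y_j$ has the form $(A,B)\mapsto \on{Res}_{y_j}\bigl(\kappa_{Kil(\fg)}(A,dB)\bigr)$ for an invariant symmetric form. Any such form vanishes identically on $\fn\subset\fg$, because for $A,B\in\fn$ the composite $\on{ad}(A)\on{ad}(B)$ is a nilpotent operator on $\fg$ and therefore has zero trace. Consequently the cocycle vanishes on $\fn\otimes\CK_{y_j}$; moreover the $\omegacrho$-twist entering the definition of $\CN^\mer_{y_j}$ acts only on the Cartan quotient and leaves this vanishing untouched. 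Since $\CN^\mer_{\oy}$ is an increasing union of pro-unipotent group ind-schemes, the Lie algebra splitting integrates uniquely to a canonical group-theoretic splitting, which furnishes the required lift.

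The main obstacle is the bookkeeping in the second step: one must identify $\CP_{\Bun_G}$, defined through the global complex $R\Gamma(X,\fg_{\fF_G})$, with the descent to $\Bun_G$ of the Kac--Moody extension at $\oy$, and check that both the inverse appearing in the definition of $\CP_{\fW_n}$ and the $\omegacrho$-twist in $\CN^\mer_{y_j}$ are tracked consistently. This is a standard Riemann--Roch-type computation, but it has to be carried out carefully so that the conventions match those used in the definition of $\fD\mod^c(\fW_n)$ and in the later sections of the paper.
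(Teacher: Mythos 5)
Your proposal is correct but takes a genuinely different route from the paper. The paper's proof is a short direct computation: for a point of ${}_{\oy}\fW_n$ and its translate under $\{\bn_j\}$, it writes the ratio of the lines $(\CP_{\Bun_G})_{\fF_G}$ and $(\CP_{\Bun_G})_{\fF'_G}$ as a product of relative determinants of $G$-bundles on the discs $\D_{y_j}$, and then trivializes each factor by observing that both bundles carry $B$-reductions agreeing over $\D^\times_{y_j}$ whose induced $T$-bundle isomorphism is regular on the whole disc --- so the $B$-filtration of the adjoint bundle has identical associated graded on both sides, forcing the relative determinant to be canonically trivial. You instead translate everything into the language of central extensions of loop groups, identify the obstruction with the restriction to $\CN^\mer_{\oy}$ of the Kac--Moody extension attached to $\CP_{\Bun_G}$, kill the Lie-algebra cocycle $(A,B)\mapsto\on{Res}\,\kappa(A,dB)$ using the vanishing of the invariant form on $\fn$ (your nilpotence-of-$\on{ad}(A)\on{ad}(B)$ argument works; even more simply, $\kappa$ pairs $\fg_\alpha$ with $\fg_{-\alpha}$ so vanishes on $\fn\times\fn$), and integrate the splitting using pro-unipotence, which also gives uniqueness since $\Hom(\CN^\mer_{\oy},\BG_m)=0$. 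Both arguments are correct and both ultimately establish that the determinant gerbe splits canonically over $N(\CK)$, but the paper's version is more self-contained (no appeal to the line-bundle/central-extension dictionary, whose conventions you rightly flag as requiring care) and handles the $\omegacrho$-twist transparently through the matching of $T$-bundles rather than by the assertion that the twist ``acts only on the Cartan quotient,'' a point you gloss over but which does hold because the cocycle is $T(\CO)$-invariant. Your route has the advantage of making the connection to Kac--Moody theory explicit, which is conceptually illuminating given the larger context of the paper.
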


\begin{proof}
For a point
$$\{(x_1,...,x_n),\fF_G,\kappa^\lambda,\beta_j\}\in {}_{\oy}\fW_n \text{ and }
\{\bn_j\in \CN_{y_j}^{\mer}\},$$ let $\{(x_1,...,x_n),\CF'_G,\kappa'{}^\lambda,\beta'_j\}$
be the corresponding new point of $_{\oy}\fW_n$. We have to show that the lines
$$\left(\CP_{\Bun_G}\right)_{\fF_G} \text{ and }\left(\CP_{\Bun_G}\right)_{\CF'_G}$$ are
canonically isomorphic. 

However, the ratio of these two lines is canonically
isomorphic to the product over $j=1,...,m$ of relative determinants of the 
$G$-bundles $\fF_G|_{\D_{y_j}}$ and $\CF'_G|_{\D_{y_j}}$, which by definition
are identified over the corresponding punctured discs $\D^\times_{y_j}$.
Both these bundles are equipped with reductions to $B$ that coincide
over $\D^\times_{y_j}$ and such that the induced isomorphism of $T$-bundles
is regular over the non-punctured disc. This establishes the required isomorphism
between the lines.
\end{proof}

\ssec{}   \label{def cat}

We define $\left(\Whit_n^c\right)_{\text{good at }\oy}$ to be
the full subcategory of $\fD\mod^c\left(\left(\fW_n\right)_{\text{good at }\oy}\right)$
consisting of D-modules on $_{\oy}\fW_n$ that are 
$(\CN^\mer_{\oy},\chi_{\oy})$-equivariant   
\footnote{The role of the Artin-Schreier sheaf in the world of D-modules in plaid 
by the exponential D-module $exp$ on $\BG_a$. We recall that
the D-module $exp$ on $\BG_a$ is generated by one section $"e^z"$
that satisfies the relation $\partial_z\cdot "e^z"="e^z"$, where $z$ is
a coordinate on $\BG_a$.} (see \cite{FGV}, Sect. 6.2.6 or
\cite{Ga}, Sect. 4.7).

Note that the $\CN^\reg_{\oy}$-equivariance
condition canonically descends any such D-module from $_{\oy}\fW_n$
to $\left(\fW_n\right)_{\text{good at }\oy}$.

\medskip

Let now $\oy'$ and $\oy''$ be two collections of points, and set $\oy=\oy'\cup \oy''$.
Note that 
$$\left(\fW_n\right)_{\text{good at }\oy'}\cap \left(\fW_n\right)_{\text{good at }\oy''}=
\left(\fW_n\right)_{\text{good at }\oy}.$$

In particular, we can consider the corresponding groups $\CN^\reg_{\oy'}$,
$\CN^\reg_{\oy''}$ and $\CN^\reg_{\oy}$ and torsors with respect to them over 
$\left(\fW_n\right)_{\text{good at }\oy}$. Let us consider the corresponding three
subcategories of $\fD\mod^c\left(\left(\fW_n\right)_{\text{good at }\oy}\right)$.
As in \cite{Ga}, Corollary 4.14 one shows that, as long as $\oy'$ and $\oy''$ 
are non-empty, the above three subcategories coincide. 

\medskip

This shows that we have a well-defined 
full-subcategory $\Whit_n^c\subset \fD^c\mod(\fW_n)$: an object $\CF$ belongs
to $\Whit_n^c$ if for any $\oy$ as above,
its restriction to any $\left(\fW_n\right)_{\text{good at }\oy}$
belongs to $\left(\Whit_n^c\right)_{\text{good at }\oy}$.

\ssec{}  \label{Whit at point}

Let us fix points $\ox:=x_1,.,,x_n$ and denote by $\fW_{\ox}$ 
the fiber over the corresponding point of $X^n$.
With no restriction of generality we can assume that all the points
$x_i$ are distinct. Let $\Whit^c_\ox$ denote the corresponding category
of twisted D-modules on $\fW_{\ox}$.

The same analysis as in \cite{FGV}, Lemma 6.2.4 or \cite{Ga}, Prop. 4.14,
shows that every object of $\Whit_\ox^c$
is holonomic, and one obtains the following explicit description of the
irreducibles (and some other standard objects) in this category.

Let $\ol{\cla}=\cla_1,...,\cla_n$ be an $n$-tuple of dominant coweights
of $G$. Let $\fW_{\ox,\ol{\cla}}$ be a locally closed substack of
$\fW_{\ox}$ consisting of points $\{\fF_G,\kappa^\lambda\}$,
where each $\kappa^\lambda$ has a pole of order 
$\langle \lambda,\cla_i\rangle$ at $x_i$ and no zeroes anywhere else.
Let $\jmath_{\ox,\ol{\cla}}$ denote the corresponding locally closed
embedding; by \cite{FGV}, Prop. 3.3.1, this map is affine. 

\medskip

Proceeding as above, for every such $\ol{\cla}$, one can introduce the
category $\Whit_{\ox,\ol{\cla}}^c$. The following is shown as 
\cite{FGV}, Lemma 6.2.4 or \cite{Ga}, Prop. 4.13:

\begin{lem}  \label{on stratum}
The category $\Whit_{\ox,\ol{\cla}}^c$ is (non-canonically) equivalent
to that of vector spaces.
\end{lem}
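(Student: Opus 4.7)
The plan is to follow closely the argument of \cite{FGV}, Lemma~6.2.4 (see also \cite{Ga}, Prop.~4.13), with only minor bookkeeping needed to accommodate the twist by $(\CP_{\fW_n})^{\otimes c}$. The strategy is to exhibit (the preimage over a suitable torsor of) the stratum $\fW_{\ox,\ol{\cla}}$ as a single orbit of $\CN^\mer_{\oy}$, and to check that the Whittaker character $\chi_{\oy}$ is trivial on the stabilizer of a conveniently chosen base point.

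First, I would pick an auxiliary nonempty collection $\oy$ of distinct points of $X$ disjoint from $\ox$, and pass to the $\CN^\reg_{\oy}$-torsor ${}_{\oy}\fW_{\ox,\ol{\cla}}$. A base point $\phi_0$ is constructed by starting from the $B$-bundle induced from the $T$-bundle $\omegacrho$, adjusting the Pl\"ucker sections $\kappa^\lambda$ so that they acquire poles of orders exactly $\langle\lambda,\cla_i\rangle$ at the $x_i$ and no other zeroes or poles, and equipping the bundle with the tautological trivialization over the discs $\D_{y_j}$.

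Second, I would establish, by a re-gluing argument in the style of \cite{FGV}, Sect.~3.3, that $\CN^\mer_{\oy}$ acts transitively on ${}_{\oy}\fW_{\ox,\ol{\cla}}$, and identify the stabilizer $H$ of $\phi_0$ as the subgroup of $\CN^\mer_{\oy}$ consisting of those elements that extend regularly across $\oy$, with a prescribed form of singularities only at $\ox$. Since elements of $H$ are by construction regular at the points of $\oy$, the residues at $\oy$ that define $\chi_{\oy}$ all vanish on $H$; hence $\chi_{\oy}|_H$ is trivial.

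Finally, I would conclude: a $(\CN^\mer_{\oy},\chi_{\oy})$-equivariant twisted D-module on the single $\CN^\mer_{\oy}$-orbit ${}_{\oy}\fW_{\ox,\ol{\cla}}$, with $\chi_{\oy}|_H$ trivial, is determined up to isomorphism by its fiber at $\phi_0$, which can be any vector space, once a trivialization of $(\CP_{\fW_n})^{\otimes c}$ at $\phi_0$ is chosen; this yields the desired equivalence with the category of vector spaces, with the non-canonicity coming precisely from the choice of base point and trivialization. I expect the main obstacle to be the second step, namely carrying out the transitivity claim and the stabilizer computation explicitly enough that the residue vanishing becomes manifest; for this the local description of the Drinfeld compactification from \cite{FGV} should supply the necessary input.
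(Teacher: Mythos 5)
Your proposal reconstructs the argument that the paper itself defers to, namely \cite{FGV}, Lemma~6.2.4 and \cite{Ga}, Prop.~4.13, and follows the same route: pass to the $\CN^\reg_{\oy}$-torsor over the stratum, exhibit it (essentially) as a single $\CN^\mer_{\oy}$-orbit via a Beauville--Laszlo re-gluing argument, check that the residue character is trivial on the stabilizer, and then invoke the standard fact about equivariant twisted D-modules on a homogeneous space with trivial character on the stabilizer. This is indeed the approach the paper intends, and the adaptation to the twisted setting is legitimate because (as the paper notes in the lemma just before \secref{def cat}) the $\CN^\mer_{\oy}$-action canonically lifts to the line bundle $\CP_{_{\oy}\fW_n}$, so the entire orbit argument goes through unchanged in the twisted category.

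One small point worth tightening: the stabilizer of your base point $\phi_0$ inside $\CN^\mer_{\oy}$ is the subgroup of elements whose re-gluing extends to an actual isomorphism of the Drinfeld structure, i.e.\ those Laurent data near $\oy$ that glue to an $\fN$-valued regular section on $X-\ox$ (with the prescribed pole behaviour only at $\ox$). Since $\oy$ is disjoint from $\ox$, such a section is in particular regular at each $y_j$, which is exactly why the residues defining $\chi_{\oy}$ vanish on the stabilizer. You say this in slightly vaguer form ("extend regularly across $\oy$"), and it would help to make the identification of the stabilizer with a subgroup of the "out" group explicit, since that is what makes the vanishing of $\chi_{\oy}$ on it manifest rather than an ad hoc observation. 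With that clarification, your argument matches the intended one.
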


Let $\CF_{\ox,\ol{\cla}}$ denote the unique irreducible object of the above
category (it is a priori defined up to a non-canonical scalar automorphism). 
Let $\CF_{\ox,\ol{\cla},!}$
(resp., $\CF_{\ox,\ol{\cla},*}$, $\CF_{\ox,\ol{\cla},!*}$) denote its extension
by means of $\jmath_{\ox,\ol{\cla},!}$ (resp., $\jmath_{\ox,\ol{\cla},*}$,
$\jmath_{\ox,\ol{\cla},!*}$) on the entire $\fW_{\ox}$. All of the above
objects are D-modules since the map $\jmath_{\ox,\ol{\cla}}$ is affine
(see \cite{FGV}, Prop. 3.3.1 or \cite{BFG}, Theorem 11.6 for 
an alternative proof). As in \cite{FGV}, Prop. 6.2.1
or \cite{Ga}, Lemma 4.11, one shows that all three are objects of 
$\Whit^c_\ox$.

\begin{lem}  \hfill    \label{descr of irr in Whit}

\smallskip

\noindent{\em(a)} Every irreducible in $\Whit^c_\ox$ is of the form 
$\CF_{\ox,\ol{\cla},!*}$ for some $\ol{\cla}$.

\smallskip

\noindent{\em(b)} 
The cones of the canonical maps
\begin{equation} \label{non-cleanness}
\CF_{\ox,\ol{\cla},!}\to \CF_{\ox,\ol{\cla},!*}\to \CF_{\ox,\ol{\cla},*}
\end{equation}
are extensions of objects $\CF_{\ox,\ol{\cla}{}',!*}$
for $\ol\cla{}'<\ol\cla$.
\end{lem}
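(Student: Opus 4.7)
The plan is to analyze $\Whit^c_\ox$ via the stratification of $\fW_\ox$ by the locally closed substacks $\fW_{\ox,\ol{\cla}}$, using that both the Whittaker equivariance and the perverse t-structure are well-behaved with respect to the affine immersions $\jmath_{\ox,\ol{\cla}}$. The key preliminary observation is that since $\jmath_{\ox,\ol{\cla}}$ is affine, the functors $\jmath_{\ox,\ol{\cla},!}$ and $\jmath_{\ox,\ol{\cla},*}$ are t-exact for the perverse t-structure; combined with the preservation of $(\CN^\mer_{\oy},\chi_{\oy})$-equivariance under these functors (as in \cite{FGV}, Prop. 6.2.1 and \cite{Ga}, Lemma 4.11, already invoked in the excerpt), this places all three objects in \eqref{non-cleanness} inside the abelian category $\Whit^c_\ox$, with $\CF_{\ox,\ol{\cla},!} \twoheadrightarrow \CF_{\ox,\ol{\cla},!*} \hookrightarrow \CF_{\ox,\ol{\cla},*}$ a surjection followed by an injection.

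For part (a), let $\CF\in\Whit^c_\ox$ be irreducible. Its support is a closed irreducible substack of $\fW_\ox$, with a unique open dense stratum $\fW_{\ox,\ol{\cla}}$. The restriction $\jmath_{\ox,\ol{\cla}}^*\CF$ is then a nonzero object of $\Whit^c_{\ox,\ol{\cla}}$, so by \lemref{on stratum} it is a finite direct sum of copies of the unique irreducible $\CF_{\ox,\ol{\cla}}$. The image of the adjunction map $\jmath_{\ox,\ol{\cla},!}\jmath_{\ox,\ol{\cla}}^*\CF \to \CF$ inside the abelian category $\Whit^c_\ox$ is the intermediate extension of this restriction, i.e., a direct sum of copies of $\CF_{\ox,\ol{\cla},!*}$; irreducibility of $\CF$ forces exactly one copy, identifying $\CF\cong \CF_{\ox,\ol{\cla},!*}$.

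For part (b), the two canonical maps in \eqref{non-cleanness} become isomorphisms after $\jmath_{\ox,\ol{\cla}}^*$, so their cones (in the heart, honest kernel and cokernel objects of $\Whit^c_\ox$) are supported on the complement $\ol{\fW_{\ox,\ol{\cla}}}\setminus \fW_{\ox,\ol{\cla}}$, which is covered by strata $\fW_{\ox,\ol{\cla}'}$ with $\ol{\cla}'<\ol{\cla}$ in the natural order on $n$-tuples of dominant coweights. Every object of $\Whit^c_\ox$ is holonomic and has finite length — this follows from the finiteness of the stratification combined with the vector-space description of each $\Whit^c_{\ox,\ol{\cla}}$ — so both cones admit finite Jordan–H\"older filtrations in $\Whit^c_\ox$. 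Applying part (a) to each constituent and matching supports against the boundary just identified, each constituent is of the form $\CF_{\ox,\ol{\cla}',!*}$ with $\ol{\cla}'<\ol{\cla}$, as claimed.

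The main obstacle, or rather the non-trivial input, is the verification that the intermediate extension $\jmath_{\ox,\ol{\cla},!*}(\CF_{\ox,\ol{\cla}})$ belongs to the abelian Whittaker subcategory, i.e., that the $(\CN^\mer_{\oy},\chi_{\oy})$-equivariance genuinely propagates from the open stratum to the $!$, $!*$, and $*$ extensions. This rests on the cited affine-ness of $\jmath_{\ox,\ol{\cla}}$ together with the fact that Whittaker equivariance is a closed condition compatible with these extension functors — exactly the content of the quoted results from \cite{FGV} and \cite{Ga}. Once this is granted, the rest is the formal analysis of simple objects and constituents along a stratification.
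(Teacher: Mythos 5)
Your proof is correct, and since the paper itself offers no argument for this lemma (it merely cites \cite{FGV}, Lemma 6.2.4 and \cite{Ga}, Prop.~4.13--4.14), what you have written is a faithful reconstruction of the stratification argument that those references run. The ingredients you use --- affineness of $\jmath_{\ox,\ol\cla}$ giving t-exactness of $\jmath_!$ and $\jmath_*$, holonomicity and finite length per object, $\Whit^c_{\ox,\ol\cla}\simeq\mathrm{Vect}$, and the observation that an irreducible is the intermediate extension of its restriction to the dense stratum of its support --- are exactly the right ones.

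The one place you elide a step worth making explicit: when you say the support of an irreducible $\CF$ has a unique dense stratum of the form $\fW_{\ox,\ol\cla}$, you are implicitly using that Whittaker objects have vanishing stalks on all the ``non-$\ox$'' strata of $\fW_\ox$ (those where the $\kappa^\lambda$ have zeroes away from $\supp(\ox)$), which forces the dense stratum to lie among the $\fW_{\ox,\ol\cla}$ rather than among the zero-carrying strata. This vanishing is precisely the nondegeneracy input from \cite{FGV} that makes the whole stratification analysis go through, and since you later invoke it implicitly again in part (b) (when identifying the constituents' supports), it deserves to be named. With that caveat noted, the argument is complete and matches the intended approach.
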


It is a basic fact (which is the main theorem of \cite{FGV}) 
that for $c=0$ the canonical maps in \eqref{non-cleanness}
are isomorphisms. This will no longer be true for an arbitrary $c$,
but as we shall show, it will still be true for $c\notin \BQ$.

\ssec{}

Let us describe more explicitly the basic object of the category $\Whit^c_\emptyset$,
which we shall denote by $\CF_\emptyset$. Consider the open substack
$\fW_{\emptyset,0}\subset \fW_\emptyset$. From the definition of the line bundle
$\CP_{\Bun_G}$ we obtain:

\begin{lem}
The restriction of $\CP_{\fW_\emptyset}$ to $\fW_{\emptyset,0}$ admits
a canonical trivialization.
\end{lem}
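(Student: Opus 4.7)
The plan is to observe that on $\fW_{\emptyset,0}$ the Pl\"ucker data specializes to a genuine reduction of $\fF_G$ to $B$ whose induced $T$-bundle is canonically $\omegacrho$. This is the standard dictionary (see \cite{BG}, Sect.~1.2.1): a collection of Pl\"ucker maps $\omega^{\langle\lambda,\crho\rangle}\hookrightarrow \CV^\lambda_{\fF_G}$ that are injective bundle maps with no zeros and satisfy the Pl\"ucker relations is precisely such a $B$-reduction, the $T$-part being read off from the source of the $\kappa^\lambda$.

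Using this $B$-reduction one filters $\fg_{\fF_G}$ by $\fn_{\fF_G}\subset \fb_{\fF_G}\subset \fg_{\fF_G}$. Since $T$ acts on the root space $\fg_\alpha$ with weight $\alpha$ and trivially on $\fh$, and since the $T$-bundle is $\omegacrho$, the three subquotients are canonically identified with
$$\bigoplus_{\alpha\in\Delta^+}\omega^{\langle\alpha,\crho\rangle},\qquad \fh\otimes\CO_X,\qquad \bigoplus_{\alpha\in\Delta^+}\omega^{-\langle\alpha,\crho\rangle}.$$
Multiplicativity of $\det R\Gamma$ on short exact sequences then gives a canonical isomorphism
$$\det R\Gamma(X,\fg_{\fF_G})\simeq \bigotimes_{\alpha\in\Delta}\det R\Gamma(X,\omega^{\langle\alpha,\crho\rangle})\otimes \det R\Gamma(X,\CO)^{\dim\fh},$$
where the product over positive roots and their negatives has been combined into a product over all of $\Delta$.

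Substituting this identification into the formula from \secref{det line} for $(\CP_{\Bun_G})_{\fF_G}$, the root product cancels against the explicit correction factor $\left(\bigotimes_{\alpha\in\Delta}\det R\Gamma(X,\omega^{\langle\alpha,\crho\rangle})\right)^{-1}$, and $\det R\Gamma(X,\CO)^{\dim\fh}$ cancels against $\det R\Gamma(X,\CO)^{-\dim \mathfrak{t}}$ since $\fh=\mathfrak{t}$. Hence $\CP_{\Bun_G}$ restricts canonically to the trivial line bundle on $\fW_{\emptyset,0}$, and passing to the inverse yields the required trivialization of $\CP_{\fW_\emptyset}|_{\fW_{\emptyset,0}}$.

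The only thing that needs checking is that all of these identifications are functorial in the family, so that the fiberwise trivialization we have just described genuinely comes from a trivialization of line bundles over the stack. This is automatic: the $B$-reduction and its induced filtration on the adjoint bundle are constructed universally from the Pl\"ucker data, and $\det R\Gamma$ is functorial. The main substantive observation is therefore that the correction terms in the definition of $\CP_{\Bun_G}$ were put there precisely to produce this cancellation, thereby explaining the parenthetical remark made in \secref{det line}. I do not anticipate any real obstacle; the only minor pitfall is keeping track of signs/orientations when running the short exact sequences of bundles through $\det R\Gamma$, which is routine.
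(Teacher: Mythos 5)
Your proof is correct and is exactly the argument the paper has in mind: the paper states the lemma follows immediately "from the definition of the line bundle $\CP_{\Bun_G}$" without further detail, and your computation spells out precisely how the $B$-reduction on $\fW_{\emptyset,0}$ filters $\fg_{\fF_G}$ so that $\det R\Gamma$ of the adjoint bundle cancels the two correction factors built into the definition. The observation that those correction factors were introduced for exactly this purpose matches the paper's parenthetical remark in \secref{det line}.
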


Thus, the category $\Whit^c_{\emptyset,0}$ is the same as $\Whit_{\emptyset,0}$.

\medskip

In addition, the sum of residues gives rise to a map $\fW_{\emptyset,0}\to \BG_a$.
We define $\CF_{\emptyset,0}$ as the object of $\Whit^c_{\emptyset,0}$,
corresponding to the pull-back of the D-module $exp$ from
$\BG_a$ to $\fW_{\emptyset,0}$ under this morphism. 

Since there are no dominant weights $\leq 0$, from \lemref{descr of irr in Whit}(b)
we obtain:
$$\jmath_{\emptyset,0,!}(\CF_{\emptyset,0})\simeq
\jmath_{\emptyset,0,!*}(\CF_{\emptyset,0})\simeq \jmath_{\emptyset,0,*}(\CF_{\emptyset,0}).$$
We set $\CF_\emptyset$ to be the above object of $\Whit^c_\emptyset$.

\ssec{The parameter "$c$"}   \label{parameter c}

Note that when the adjoint group, corresponding to $G$, is semi-simple
(and not simple), the line bundle $\CP_{\Bun_G}$ is naturally a product
of lines bundles corresponding to the simple factors of $G_{ad}$. Therefore,
when defining the categories $\Whit^c_n$, instead of one scalar
$c$ one can work with a $k$-tuple of scalars, where $k$ is the number of
simple factors. 

More invariantly, from now on we shall understand $c$
as an ad-invariant symmetric bilinear form $\fg_{ad}\otimes \fg_{ad}\to \BC$, or 
equivalently, a Weyl group invariant symmetric bilinear form 
$(,)_c:\cLambda_{G_{ad}}\otimes \cLambda_{G_{ad}}\to \BC$.

\medskip

We will say that $c$ is "integral" if the latter form takes integral values.
In this case it is known that $"(\CP_{\Bun_G})^{\otimes c}"$ is defined
as a line bundle. Hence, the categories $\Whit^{c'}_n$ and $\Whit^{c''}_n$
with $c''-c'$ integral are equivalent.

\medskip

We will say that $c$ is non-integral if 
$(\check\alpha_\imath,\check\alpha_\imath)_c\notin \BZ$ for
any $\imath\in \CI$. Note, that unless $G$ is simple, not "integral"
is not the same as "non-integral". 

\medskip

We shall say that $c$ is irrational if the restriction of $c$ to each of the
simple factors is an irrational multiple of the Killing form. Equivalently,
this means that $(\check\alpha_\imath,\check\alpha_\imath)_c\notin \BQ$
for any $\imath\in \CI$.

\section{The FS category}   \label{FS cat}

From now on in the paper we will assume that $c$ is non-integral.

\ssec{}

For $n\in \BZ^{\geq 0}$ and $\cmu\in \cLambda$ we introduce
the ind-scheme $X^\cmu_n$, fibered over $X^n$ to classify
pairs $\{(x_1,...,x_n)\in X^n,D\}$, where $D$ is a $\cLambda$-valued divisor 
on $X$ of total degree $\cmu$ with the condition that for every
dominant weight $\lambda$, the $\BZ$-valued divisor $\langle \lambda,D\rangle$ is
anti-effective away from $x_1,...,x_n$.

\medskip

When $n=0$, we shall use the notation $X^\cmu_\emptyset$, or sometimes simply
$X^\cmu$. For this scheme to be non-empty we need that 
$\cmu\in -\cLambda^{pos}$. If $\cmu=-\Sigma\, m_\imath\cdot \check\alpha_\imath$,
we have
\begin{equation} \label{sym power}
X^\cmu_\emptyset=\underset{\imath}\Pi\, X^{(m_\imath)}.
\end{equation}

\medskip

We can represent $X^\cmu_n$ explicitly as a union of schemes as follows.
Fix an $n$-tuple $\ol{\cla}=\cla_1,...,\cla_n$ of elements of $\cLambda$.
We define a closed subscheme $X^\cmu_{n,\leq \ol\cla}\subset X^\cmu_n$
by the condition that the divisor $D':=D-\underset{i=1,...,n}\Sigma\, \cla_i\cdot x_i$
is such that $\langle \lambda,D'\rangle$ is anti-effective. By adding the
divisor $\underset{i=1,...,n}\Sigma\, \cla_i\cdot x_i$, we identify the scheme
$X^\cmu_{n,\leq \ol\cla}$ with $X^{\cmu-\cla_1-...-\cla_n}_\emptyset$.

For another $n$-tuple $\ol{\cla}'=\cla'_1,...,\cla'_n$ with $\cla'_i\geq \cla_i$
we have a natural closed embedding $X^\cmu_{n,\leq \ol\cla}\hookrightarrow
X^\cmu_{n,\leq \ol\cla'}$. It is clear that 
$$X^\cmu_n=\underset{\ol\cla}{\underset{\longrightarrow}{lim}}\, X^\cmu
_{n,\leq \ol\cla}.$$


\ssec{}   \label{intr line bundle}

We are now going to introduce a certain canonical line bundle $\CP_{X^\cmu_n}$
over $X^\cmu_n$. Consider the stack $\Bun_T\simeq 
\Pic(X)\underset{\BZ}\otimes \cLambda$. On it we consider the following
line bundle $\fF_{\Bun_T}$: its fiber at $\CP_T\in \Bun_T$ is
the line
$$\left(\underset{\alpha\in \Delta}\bigotimes \det 
R\Gamma\left(X,\alpha(\fF_T)\right)\right)\otimes
\left(\underset{\alpha\in \Delta}\bigotimes \det 
R\Gamma\left(X,\omega^{\langle \alpha,\crho\rangle}\right)\right)^{\otimes -1}.$$

Consider the Abel-Jacobi map
$$AJ:X^\cmu_n\to \Bun_T$$ that sends
a point $\{(x_1,...,x_n)\in X^n,D\}$ to the $T$-bundle $\omega^{\crho}(-D)$.
We set 
$$\CP_{X^\cmu_n}:=AJ^*(\CP^{\otimes -1}_{\Bun_T}).$$

\medskip

The main property of the line bundle $\CP_{X^\cmu_n}$ is that it has
a local nature in $X$:

\begin{lem}  \label{fiber of line bundle}  
%
%
%
The fiber of $\CP_{X^\cmu_n}$ at point $\{(x_1,...,x_n)\in X^n,D\}\in X^\cmu_n$ 
with $D=\Sigma\, \cmu_k\cdot y_k$,
is canonically isomorphic to 
$$\underset{k}\bigotimes\, (\omega^{\frac{1}{2}}_{y_k})^{\otimes (\cmu_k,\cmu_k+2\crho)_{Kil}},$$
where $(\cdot,\cdot)_{Kil}$ is the Killing form on $\cLambda$.
%
%
\end{lem}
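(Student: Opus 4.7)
The plan is to reduce the fiber calculation to a local statement at each of the distinct points $y_k$, and then collapse the resulting sum over roots using the standard identities for the Killing form.

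Unwinding the definition, the fiber of $\CP_{X^\cmu_n}$ at the given point is the inverse of the fiber of $\CP_{\Bun_T}$ at the $T$-bundle $\omegacrho(-D)$. Applying a character $\alpha\in \Delta$ to $\omegacrho(-D)$ produces the line bundle $\omega^{\langle\alpha,\crho\rangle}(-\langle\alpha, D\rangle)$ on $X$, so the fiber in question becomes $\bigotimes_{\alpha\in \Delta} R_\alpha$, where
$$R_\alpha := \det R\Gamma\bigl(X,\omega^{\langle\alpha,\crho\rangle}\bigr)\otimes \det R\Gamma\bigl(X,\omega^{\langle\alpha,\crho\rangle}(-\langle\alpha,D\rangle)\bigr)^{\otimes -1}.$$
It therefore suffices to compute each $R_\alpha$ separately and tensor the results.

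For the local computation, I would establish that for any line bundle $\CM$ on $X$, point $y\in X$, and integer $n\in\BZ$, one has a canonical isomorphism
$$\det R\Gamma(X,\CM)\otimes \det R\Gamma(X,\CM(-ny))^{\otimes -1} \;\cong\; \CM_y^{\otimes n}\otimes \omega_y^{\otimes n(n-1)/2}.$$
For $n>0$ this follows by iterating the short exact sequences $0\to \CM(-(i+1)y)\to \CM(-iy)\to \CM(-iy)|_y\to 0$ together with the normal-bundle identification $\CO(-iy)|_y\cong \omega_y^{\otimes i}$; the case $n<0$ is obtained by inverting, and $n=0$ is trivial. Since $\langle\alpha,D\rangle=\sum_k \langle\alpha,\cmu_k\rangle\cdot y_k$ is supported on pairwise distinct points, specializing to $\CM=\omega^{\langle\alpha,\crho\rangle}$ with $n=\langle\alpha,\cmu_k\rangle$ at each $y_k$ and tensoring yields
$$R_\alpha\;\cong\; \bigotimes_k \omega_{y_k}^{\otimes\bigl(\langle\alpha,\crho\rangle\langle\alpha,\cmu_k\rangle \,+\, \tfrac{1}{2}\langle\alpha,\cmu_k\rangle(\langle\alpha,\cmu_k\rangle-1)\bigr)}.$$

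Finally, to tensor over $\alpha\in\Delta$, I would invoke the standard identities $\sum_{\alpha\in\Delta}\langle\alpha,\cmu_k\rangle=0$ (by antipodality of $\Delta$), $\sum_{\alpha\in\Delta}\langle\alpha,\crho\rangle\langle\alpha,\cmu_k\rangle=(\crho,\cmu_k)_{Kil}$, and $\sum_{\alpha\in\Delta}\langle\alpha,\cmu_k\rangle^2=(\cmu_k,\cmu_k)_{Kil}$, the latter two being the defining expressions of the Killing form on the Cartan. These collapse the total exponent of $\omega_{y_k}$ to $(\crho,\cmu_k)_{Kil}+\tfrac{1}{2}(\cmu_k,\cmu_k)_{Kil}=\tfrac{1}{2}(\cmu_k,\cmu_k+2\crho)_{Kil}$; substituting $\omega_{y_k}=(\omegah_{y_k})^{\otimes 2}$ produces the claimed formula. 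The only genuine obstacle is the sign bookkeeping when $\langle\alpha,\cmu_k\rangle$ is negative and checking that the local-at-$y_k$ isomorphisms glue into a canonical (in particular $y_k$-symmetric) global one; both are routine once one works within the biadditive Deligne-pairing formalism.
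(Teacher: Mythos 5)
The paper states \lemref{fiber of line bundle} without proof, so there is no paper argument to compare yours against; your proof is correct and is the natural one. You unwind the definition $\CP_{X^\cmu_n}=AJ^*(\CP_{\Bun_T}^{\otimes -1})$ to reduce to the product of ratios $R_\alpha$ over $\alpha\in\Delta$, compute each ratio locally at each distinct $y_k$ by the standard filtration argument $\det R\Gamma(X,\CM)\otimes\det R\Gamma(X,\CM(-ny))^{\otimes-1}\cong \CM_y^{\otimes n}\otimes\omega_y^{\otimes n(n-1)/2}$ (using the conormal identification $\CO(-y)|_y\cong\omega_y$, valid for $n$ of either sign), and then collapse the $\alpha$-sum using the three identities $\sum_\alpha\langle\alpha,\cmu_k\rangle=0$, $\sum_\alpha\langle\alpha,\crho\rangle\langle\alpha,\cmu_k\rangle=(\crho,\cmu_k)_{Kil}$, and $\sum_\alpha\langle\alpha,\cmu_k\rangle^2=(\cmu_k,\cmu_k)_{Kil}$. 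The total exponent of $\omega_{y_k}$ comes out to $\tfrac12(\cmu_k,\cmu_k+2\crho)_{Kil}$, which gives the stated formula after rewriting in terms of $\omegah$. Your concluding remark about canonicity is the right thing to flag: to make the isomorphism genuinely canonical rather than defined up to sign, one should phrase the local step via the Deligne symbol $\langle\CM,\CO(y)\rangle$, whose biadditivity and symmetry make the bookkeeping independent of the order in which the short exact sequences are peeled off; as you say, this is routine.
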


This lemma implies in particular that the line bundle $\CP_{X^\cmu_n}$
can be defined over $X^\cmu_n$ for a not necessarily complete curve $X$.
\footnote{The construction of the line bundle $\CP_{X^\cmu_n}$ starting
from the form $(\cdot,\cdot)_{Kil}$ is a particular case of a $\theta$-data,
see \cite{CHA}, Sect. 3.10.3.}

\medskip

For a partition $n=n_1+n_2$, $\cmu=\cmu_1+\cmu_2$, consider the natural
addition map $\on{add}_{\cmu_1,\cmu_2}: 
X^{\cmu_1}_{n_1}\times X^{\cmu_1}_{n_2}\to X^\cmu_n$.

For an $n_1$-tuple and an $n_2$-tuple of elements of $\cLambda$, $\ol\cla_1$
and $\ol\cla_2$, respectively, let $X^{\cmu_1}_{n_1,\leq\ol\cla_1}\subset
X^{\cmu_1}_{n_1}$ and $X^{\cmu_2}_{n_2,\leq\ol\cla_2}\subset
X^{\cmu_2}_{n_2}$ be the corresponding closed subschemes.
Let
$$\left(X^{\cmu_1}_{n_1,\leq\ol\cla_1}\times X^{\cmu_1}_{n_2,\leq\ol\cla_2}\right)_{disj}$$
be the open part of the product, corresponding to the condition that the
supports of the corresponding divisors are disjoint. Note that the restriction
of the map $\on{add}_{\cmu_1,\cmu_2}$ to this open subset
is an \'etale map to the subscheme $X^{\cmu}_{n,\leq\ol\cla_1\cup\ol\cla_2}$.

\medskip

From \lemref{fiber of line bundle} we obtain the following 
{\it factorization property}:
\begin{equation} \label{line bundle factorization}
\on{add}_{\cmu_1,\cmu_2}^*(\CP_{X^\cmu_n})
|_{\left(X^{\cmu_1}_{n_1,\leq\ol\cla_1}\times X^{\cmu_1}_{n_2,\leq\ol\cla_2}\right)_{disj}}
\simeq
\CP_{X^{\cmu_1}_{n_1}}\boxtimes \CP_{X^{\cmu_2}_{n_2}}|_
{\left(X^{\cmu_1}_{n_1,\leq\ol\cla_1}\times X^{\cmu_1}_{n_2,\leq\ol\cla_2}\right)_{disj}},
\end{equation}
compatible with refinements of partitions.

\medskip

Let us also denote by $(X^{\cmu_1}_\emptyset\times X^{\cmu_2}_n)_{disj}$
ind-subscheme of $X^{\cmu_1}_\emptyset\times X^{\cmu_2}_n$
consisting of points $$\{D_1\in X^{\cmu_1}_\emptyset,\ox\in X^n,
D_2\in X^{\cmu_2}_n\},$$ such that $D_1$ is disjoint from both $\ox$ and $D_2$.
Let us denote by $\on{add}_{\cmu_1,\cmu_2,disj}$ the restriction of the map
$\on{add}_{\cmu_1,\cmu_2}$ to this open subset; it is also \'etale. We have
an isomorphism
\begin{equation} \label{basic line bundle factorization}
\on{add}_{\cmu_1,\cmu_2,disj}^*(\CP_{X^\cmu_n})
\simeq
\CP_{X^{\cmu_1}_\emptyset}\boxtimes \CP_{X^{\cmu_2}_{n}}.
\end{equation}

\ssec{}

Let $\fD\mod^c(X^\cmu_n)$ denote the category of 
$"\CP_{X^\cmu_n}^{\otimes c}"$-twisted D-modules on $X^\cmu_n$.

\medskip

As in \secref{parameter c}, if the group $G_{ad}$ is not simple, the line bundle
$\CP_{X^\cmu_n}$ is naturally a product of several line bundles, one for each 
simple factor. Hence, also in the present context we will interpret $c$
as an invariant symmetric bilinear form $\cLambda\otimes \cLambda\to \BC$.
As in {\it loc. cit.}, the categories $\fD\mod^{c'}(X^\cmu_n)$ 
and $\fD\mod^{c''}(X^\cmu_n)$ are equivalent if $c''-c'$ is integral.

\medskip

In order to introduce the category of factorizable sheaves, we will need
to define a particular object of the category $\fD\mod^c(X^\cmu_\emptyset)$,
denoted $\CL^{\cmu}_\emptyset$. 

Let $\oX^\cmu_\emptyset\subset X^\cmu_\emptyset$ be the open subscheme, corresponding
to divisors of the form $\underset{k}\Sigma\, \cmu_k\cdot y_k$
with all $y_k$ distinct and each $\cmu_k$ being the negative of one of the 
simple co-roots. Let $j^{Diag}$ denote the corresponding open embedding.

By \lemref{fiber of line bundle}, the line bundle $\CP_{X^\cmu_\emptyset}|_{\oX^\cmu_\emptyset}$ canonically
trivializes. Indeed, $(\cla,\cla+2\crho)_{Kil}=0$ whenever
$\cla$ is of the form $w(\crho)-\crho$ for some $w\in W$; in particular
for $\cla$ being a simple co-root.

Hence, the category $\fD\mod^c(\oX^\cmu_\emptyset)$ is canonically
the same as $\fD\mod(\oX^\cmu_\emptyset)$. 

\medskip

We let $\oL^\cmu_\emptyset\in \fD\mod^c(\oX^\cmu_\emptyset)$ to be the following
object. It corresponds via the above equivalence
$$\fD\mod^c(\oX^\cmu_\emptyset)\simeq \fD\mod(\oX^\cmu_\emptyset)$$
to the {\it sign} local system on $\oX^\cmu_\emptyset$. The latter is, by
definition, the product of sign local systems on each $\oX^{(m_\imath)}$
when we write $X^\cmu_\emptyset$ as in \eqref{sym power}.

\medskip

We define 
$$\CL^\cmu_\emptyset:=j^{Diag}_{!*}(\oL^\cmu_\emptyset).$$

Note that the Goresky-MacPherson extension is taken in the
category $\fD\mod^c(\oX^\cmu_\emptyset)$ (and {\it not} in
$\fD\mod(\oX^\cmu_\emptyset)$).

\medskip

\noindent{\it Example.} Let $G=SL_2$ and $c$ be irrational. Then the fibers
and co-fibers of $\CL^\cmu_\emptyset$ on the closed sub-variety 
$X^\cmu_\emptyset-\oX^\cmu_\emptyset$ are zero.

\medskip

By construction, the system of objects
$\cmu\mapsto \CL^\cmu_\emptyset$ has the following factorization property with
respect to \eqref{basic line bundle factorization}: for $\cmu=\cmu_1+\cmu_2$,
\begin{equation} \label{basic sheaf factorization}
\on{add}_{\cmu_1,\cmu_2,disj}^*(\CL^\cmu_\emptyset)
\simeq \CL^{\cmu_1}_\emptyset \boxtimes
\CL^{\cmu_2}_\emptyset.
\end{equation}
These isomorphisms are compatible with refinements of partitions.

\ssec{}   \label{intr factor sheaves}

We are now ready to introduce the sought-for category of factorizable sheaves.
We define $\wt{\FS}{}^c_n$ to have as objects (twisted) D-modules $\CL^\cmu_n\in \fD\mod^c(X^\cmu_n)$,
defined for each $\cmu\in \cLambda$, equipped with {\it factorization isomorphisms}:

For any partition $\cmu=\cmu_1+\cmu_2$ and the corresponding map 
$$\on{add}_{\cmu_1,\cmu_2,disj}:\left(X^{\cmu_1}_\emptyset \times X^{\cmu_2}_n\right)_{disj}\to X^\cmu_n,$$
we must be given an isomorphism
\begin{equation} \label{factorization for arb}
\on{add}_{\cmu_1,\cmu_2,disj}^*(\CL^\cmu_n)\simeq 
\CL^{\cmu_1}_\emptyset \boxtimes \CL^{\cmu_2}_n,
\end{equation}
compatible with refinements of partitions with respect to the isomorphism 
\eqref{basic sheaf factorization}.

A morphisms between two factorizable sheaves $^1\CL_n=\{{}^1\CL^\cmu_n\}$ and
$^2\CL_n=\{{}^2\CL^\cmu_n\}$ is a collection of maps $^1\CL^\cmu_n\to {}^2\CL^\cmu_n$,
compatible with the isomorphisms \eqref{factorization for arb}.

\medskip

Let $\oX^n\overset{j^{poles}}\hookrightarrow X^n$ be the complement to 
the diagonal divisor. By the same token, we define the category $\wt{\FS}{}^c_{\overset{\circ}n}$. We have a natural restriction functor
$(j^{poles})^*:\wt{\FS}{}^c_n\to \wt{\FS}{}^c_{\overset{\circ}n}$ and its right adjoint
$$(j^{poles})_*:\wt{\FS}{}^c_{\overset{\circ}n}\to 
\wt{\FS}{}^c_{n}.$$

Let now $\ol{n}$ be a partition $n=n_1+...+n_k$, and let 
$X^k\overset{\Delta_{\ol{n}}}\to X^n$ and $\oX^k
\overset{\overset{\circ}\Delta_{\ol{n}}}\to X^n$ be the
corresponding subschemes. We have the natural functors

$$(\Delta_{\ol{n}})_*:\wt{\FS}{}^c_{k}\to \wt{\FS}{}^c_n \text{ and }
(\overset{\circ}\Delta_{\ol{n}})_*:
\wt{\FS}{}^c_{\overset{\circ}k}\to \wt{\FS}{}^c_n.$$

The right adjoint functors are easily seen to be defined on the level
of derived categories (the latter are understood simply as the derived
categories of the corresponding abelian categories)
$$(\Delta_{\ol{n}})^!:D^+(\wt{\FS}{}^c_n)\to D^+(\wt{\FS}{}^c_{k})
\text{ and }
(\overset{\circ}\Delta_{\ol{n}})^!:D^+(\wt{\FS}{}^c_n)\to 
D^+(\wt{\FS}{}^c_{\overset{\circ}k}),$$
and coincide with the same-named functors on the level of
underlying twisted D-modules.

\ssec{}

We shall now introduce a (full, abelian) subcategory $\FS^c_n\subset \wt{\FS}{}^c_n$,
which will be our main object of study. An object $\CL_n\in \wt{\FS}{}^c_n$
belongs to $\FS^c_n$ if the following two conditions are satisfied:

\medskip

\noindent(i) Finiteness of support: $\CL^\cmu_n$ is non-zero only for $\cmu$
belonging to finitely many cosets $\cLambda/\on{Span}(\check\Delta)$.
For each such coset,
there exists $\ol\cnu=\cnu_1,...,\cnu_n\in \cLambda^n$, such that for
each $\cmu$ belonging to the above coset, the support of 
$\CL^\cmu_n$ is contained in the subscheme $X^\cmu_{n,\leq \ol\cnu}$.

\medskip

\noindent(ii) To state the second condition, we shall first do it "over" $\oX^n$, i.e.,
we will single out the subcategory $\FS^c_{\overset{\circ}n}$ inside
$\wt{\FS}{}^c_{\overset{\circ}n}$.

Our requirement is that there are {\it only finitely many} 
collections $(\cmu_1,...,\cmu_n)$,
such that for $\cmu=\underset{i=1,...,n}\Sigma\, \cmu_i$, the singular support
of $\CL^\cmu_{\overset{\circ}n}$,
viewed as a twisted D-module on 
\begin{equation} \label{part away}
X^\cmu_{n,\leq\ol\cnu}\underset{X^n}\times \oX^n
\end{equation}
(for some/any choice of $\ol\cnu$ such that the support condition is satisfied),
contains the conormal to the sub-scheme $\oX^n$, where the latter is embedded
into \eqref{part away} by means of $(x_1,...,x_n)\mapsto \Sigma\, \cmu_i\cdot x_i$.

\medskip

\noindent (Note that the above condition is actually a condition on $\cmu$:
when the latter is fixed, there are only finitely many partitions 
$\cmu=\underset{i=1,...,n}\Sigma\, \cmu_i$ with $\cmu_i\leq \cnu_i$
where $\ol\cnu$ bounds the support of our sheaf.)

\medskip

Now, condition (ii) over $X^n$ is that 
for any partition $n=n_1+...+n_k$ each of the cohomologies
of $(\overset{\circ}\Delta_{\ol{n}})^!(\CL_n)$, which is a priori
an object of $\wt{\FS}{}^c_{\overset{\circ}k}$, belongs in fact to
$\FS^c_{\overset{\circ}k}$.

\ssec{}

Let us fix the pole points $\ox=(x_1,...,x_n)\in X^n$,
and let $X^\cmu_\ox$ denote the fiber of $X^\cmu_n$ 
over this configuration. Proceeding as above, we
can introduce the categories $\wt{\FS}{}^c_\ox$ and 
$\FS^c_\ox$. We shall now describe some special
objects in them. With no restriction of generality we
can assume that the $x_i$'s are distinct.

\medskip

Let $\ol\cla=\cla_1,...,\cla_n$ be an $n$-tuple
of elements of $\cLambda$. For each $\cmu$
consider the corresponding closed subscheme
$X^\cmu_{\ox,\leq \ol\cla}:=X^\cmu_\ox \cap X^\cmu_{n,\leq \ol\cla}$.
If $\cmu=
\underset{i=1,...,n}\Sigma\, \cla_i-\underset{\imath\in \CI}\Sigma\,
m'_\imath\cdot \check\alpha_\imath$, then
\begin{equation} \label{trunc scheme}
X^\cmu_{\ox,\leq \ol\cla}\simeq \underset{\imath\in \CI}\Pi\,
X^{(m'_\imath)}.
\end{equation}

Let $$\oX^\cmu_{\ox,\leq \ol\cla}\overset{j^{Diag,poles}}\hookrightarrow
X^\cmu_{\ox,\leq \ol\cla}$$ be the open subscheme corresponding
to divisors of the form
$$\Sigma\, \cmu_k\cdot y_k+\underset{i=1,...,n}\Sigma\, \cla_i\cdot x_i, \text{  with   }
\Sigma\, \cmu_k+\underset{i=1,...,n}\Sigma\, \cla_i=\cmu$$
where all the $y_k$'s are pairwise distinct and different from the $x_i$'s,
and each $\cmu_k$ is the negative of a simple coroot. I.e., 
$\oX^\cmu_{\ox,\leq \ol\cla}$ is the complement of the diagonal
divisor in the product \eqref{trunc scheme}.
Note that the restriction to this subscheme of the line bundle $\CP_{X^\cmu_n}$ 
is constant with fiber
$$\underset{i=1,...,n}\bigotimes\, \omega_{x_i}^{(\cla_i,\cla_i+2\crho)_{\frac{Kil}{2}}}.$$
We define a local system on $\oX^\cmu_{\ox,\leq \ol\cla}$, denoted $\oL^\cmu_{\ox,\ol\cla}$,
as in the case of $\oL^\cmu_\emptyset$, using the product of sign local
systems on the factors in \eqref{trunc scheme}.

\medskip

Let $X^\cmu_{\ox,=\ol\cla}$ be the open subset of $X^\cmu_{\ox,\leq \ol\cla}$,
corresponding to divisors of the form 
$\Sigma\, \cmu_k\cdot y_k+\underset{i=1,...,n}\Sigma\, \cla_i\cdot x_i$
with $y_k\neq x_i$. We have the corresponding open embeddings:
$$\oX^\cmu_{\ox,\leq \ol\cla}\overset{'j^{Diag,poles}}\hookrightarrow
X^\cmu_{\ox,=\ol\cla}\overset{''j^{Diag,poles}}\hookrightarrow 
X^\cmu_{\ox,\leq \ol\cla}$$
with $j^{Diag,poles}={}''j^{Diag,poles}\circ {}'j^{Diag,poles}$.

\medskip

We let
\begin{align*}
&\CL^\cmu_{\ox,\ol\cla,!}:={}''j^{Diag,poles}_{!}\circ
{}'j^{Diag,poles}_{!*}(\oL^\cmu_{\ox,\ol\cla}),\,\,\,\,
\CL^\cmu_{\ox,\ol\cla,*}:={}''j^{Diag,poles}_{*}\circ
{}'j^{Diag,poles}_{!*}(\oL^\cmu_{\ox,\ol\cla})\\
&\text{ and }\CL^\cmu_{\ox,\ol\cla,!*}:={}''j^{Diag,poles}_{!*}\circ
{}'j^{Diag,poles}_{!*}(\oL^\cmu_{\ox,\ol\cla})\simeq
j^{Diag,poles}_{!*}(\oL^\cmu_{\ox,\ol\cla}).
\end{align*}

The collections $\CL_{\ox,\ol\cla,!}:=\{\CL^\cmu_{\ox,\ol\cla,!}\}$,
$\CL_{\ox,\ol\cla,!*}:=\{\CL^\cmu_{\ox,\ol\cla,!*}\}$, $\CL_{\ox,\ol\cla,*}:=\{\CL^\cmu_{\ox,\ol\cla,*}\}$
are naturally objects of $\wt{\FS}{}^c_\ox$.

\medskip

Points (a) and (c) of the following lemma essentially results from the definitions, whereas
point (b) follows from \cite{FS} (see \secref{proofs FS} below).

\begin{lem} \hfill \label{descr of irr in FS}

\smallskip

\noindent{\em(a)}
The objects $\CL_{\ox,\ol\cla,!*}$ are the irreducibles of $\wt{\FS}{}^c_\ox$.

\smallskip

\noindent{\em(b)}
The objects $\CL_{\ox,\ol\cla,!}$ and $\CL_{\ox,\ol\cla,*}$ are of finite length.

\smallskip

\noindent{\em(c)}
The cones of the natural maps
$$\CL_{\ox,\ol\cla,!}\to \CL_{\ox,\ol\cla,*!}\to \CL_{\ox,\ol\cla,*}$$
are extensions of objects of the form $\CL_{\ox,\ol\cla{}',!*}$ for
$\ol\cla'\leq \ol\cla$.

\end{lem}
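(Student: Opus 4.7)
The plan is to exploit the factorization isomorphisms \eqref{factorization for arb} to reduce everything to the generic stratification of the schemes $X^\cmu_{\ox,\leq\ol\cla}$, and in particular to the known fact that $\CL^{-\check\alpha_\imath}_\emptyset$ is (the twisted IC extension of) the sign local system on a single point, so that its factorization with $\CL^\cmu_\ox$ pins down the generic behavior of the latter.

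For part (a), given an irreducible $\CL_\ox=\{\CL^\cmu_\ox\}$, I would first locate the minimal $\ol\cla$ (componentwise in $\cLambda^n$) for which the support condition \textbf{(i)} is satisfied, i.e., such that some $\CL^\cmu_\ox$ is nonzero on the locus $X^\cmu_{\ox,\leq\ol\cla}$ with leading coefficient $\ol\cla$ at $\ox$. Applying \eqref{factorization for arb} to partitions of the form $\cmu=-\check\alpha_\imath+(\cmu+\check\alpha_\imath)$ along a variable disjoint point $y_k$, and using that $\CL^{-\check\alpha_\imath}_\emptyset$ is the sign local system, one proves by induction on the depth $\cmu-\sum\cla_i\in -\cLambda^{pos}$ that the restriction $\CL^\cmu_\ox|_{\oX^\cmu_{\ox,\leq\ol\cla}}$ is a multiple of $\oL^\cmu_{\ox,\ol\cla}$. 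Irreducibility forces multiplicity one, and then uniqueness of the twisted Goresky–MacPherson extension identifies $\CL_\ox$ with $\CL_{\ox,\ol\cla,!*}$.

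For part (c), the cones of $\CL_{\ox,\ol\cla,!}\to \CL_{\ox,\ol\cla,!*}\to \CL_{\ox,\ol\cla,*}$ are supported, on each $\cmu$-component, on the boundary $X^\cmu_{\ox,\leq\ol\cla}\setminus X^\cmu_{\ox,=\ol\cla}$, i.e., on divisors where some $y_k$ collides with some $x_i$. Absorbing such a $\cmu_k\in -\cLambda^{pos}$ into $\cla_i$ produces a new tuple $\ol\cla'$ with $\cla'_i=\cla_i+\cmu_k\le\cla_i$ (strictly in at least one coordinate), so the cone is supported on $X^\cmu_{\ox,\leq\ol\cla'}$ for some $\ol\cla'\leq\ol\cla$. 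Moreover, the cone inherits a system of factorization isomorphisms from those on the defining morphism, hence is itself an object of $\wt{\FS}{}^c_\ox$. By part (a), its Jordan–Hölder constituents are of the form $\CL_{\ox,\ol\cla',!*}$ with $\ol\cla'\leq \ol\cla$, as required.

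For part (b), iterating (c) shows that $\CL_{\ox,\ol\cla,!}$ and $\CL_{\ox,\ol\cla,*}$ admit filtrations by subquotients of the form $\CL_{\ox,\ol\cla',!*}$ with $\ol\cla'\leq\ol\cla$; finiteness of this process is the content of \cite{FS}, which bounds the possible $\ol\cla'$ and the length of each resulting factor. The main obstacle I expect is the uniqueness step in part (a): propagating the identification from the open stratum $\oX^\cmu_{\ox,\leq\ol\cla}$ across the diagonal divisor demands that the twisted IC extension of $\oL^\cmu_{\ox,\ol\cla}$ admit no nontrivial self-extensions staying in the twisted category, which uses the standing assumption that $c$ is non-integral in an essential way.
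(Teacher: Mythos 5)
Your overall route matches the paper's: (a) and (c) are read off from the factorization structure, while (b) rests on [FS]. But two points in your argument deserve sharpening.

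First, the "main obstacle" you flag in part (a) is misdiagnosed. The question is not whether the twisted Goresky--MacPherson extension admits self-extensions, but rather: given an irreducible $\CL_\ox\in\wt{\FS}{}^c_\ox$ whose restriction to the generic stratum $\oX^\cmu_{\ox,\leq\ol\cla}$ has been identified with $\oL^\cmu_{\ox,\ol\cla}$ for every $\cmu$, why is each component $\CL^\cmu_\ox$ the IC extension? A priori $\CL^\cmu_\ox$ could have a sub- or quotient D-module supported on the boundary without making $\CL_\ox$ reducible as an object of $\wt{\FS}{}^c_\ox$ (since a subobject must be a \emph{compatible} system). The correct argument is a maximality argument: for each $\cmu$, let $\CK^\cmu\subset\CL^\cmu_\ox$ be the maximal subobject supported on the complement of the open stratum; because the standard objects $\CL^{\cmu_1}_\emptyset$ have no boundary sub- or quotient, the factorization isomorphisms carry $\CK^\cmu$ to $\CL^{\cmu_1}_\emptyset\boxtimes\CK^{\cmu_2}$ over the disjoint locus, so $\{\CK^\cmu\}$ assembles into a subobject of $\CL_\ox$. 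Irreducibility forces $\CK^\cmu=0$ for all $\cmu$, and dually for quotients; that is what forces $\CL^\cmu_\ox$ to be the IC extension. The role of $c$ non-integral is already built in, via the definition of $\CL^\cmu_\emptyset$ as a twisted IC.

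Second, as written your (b) and (c) are logically circular: in (c) you invoke ``Jordan--H\"older constituents,'' which presupposes the finite length you only establish afterwards in (b). The clean version is to state (c) in the weak form that every simple subquotient of the cone is of the form $\CL_{\ox,\ol\cla',!*}$ with $\ol\cla'\leq\ol\cla$ (this really does follow from the definitions plus (a), with no finiteness needed), and then obtain the asserted finite filtration from (b). Finally, your citation for (b) should be sharpened: the paper's appeal to [FS] is specifically to Theorem II.8.18, an explicit vanishing-cycles computation, which shows that the cotangent at $\cmu\cdot x$ lies in the singular support of $\CL^\cmu_{x,\cla,!}$ (resp.\ $\CL^\cmu_{x,\cla,*}$) exactly when $\cmu$ is a weight of the Verma module (resp.\ dual Verma module) of highest weight $\cla$; since a Verma module has only finitely many weights in any interval $[\cmu_0,\cla]$, this bounds the set of possible $\ol\cla'$ and gives finite length.
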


Finally, we will use the following result, which also follows from
\cite{FS} (see \secref{proofs FS} below):

\begin{thm} \hfill \label{which objects}

\smallskip

\noindent{(a)} Assume that $c$ is rational. Then all the objects
$\CL_{\ox,\ol\cla,!},\CL_{\ox,\ol\cla,*!}$ and $\CL_{\ox,\ol\cla,*}$
belong to $\FS^c_\ox$.

\smallskip

\noindent{(b)}
Assume that $c$ is irrational. Then 

\begin{itemize}

\item(i)
The objects
$\CL_{\ox,\ol\cla,!}$ and $\CL_{\ox,\ol\cla,*}$ never belong to $\FS^c_\ox$.

\item(ii)
An object $\CL_{\ox,\ol\cla,!*}$ belongs to $\FS^c_\ox$ if and only if
all $\cla_i$ are dominant.

\item(iii) The category $\FS^c_\ox$ is semi-simple.

\end{itemize}

\end{thm}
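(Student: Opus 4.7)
The plan is to reduce each assertion to properties already encoded in the construction of the $\CL_{\ox,\ol\cla,?}$, combined with the main equivalence of \cite{FS}. The finiteness-of-support condition (i) in the definition of $\FS^c_\ox$ is immediate for each of the three families: by construction every $\CL^\cmu_{\ox,\ol\cla,?}$ is supported on the closed subscheme $X^\cmu_{\ox,\leq\ol\cla}$, and the $\cmu$'s that arise lie in the single coset $\sum_i\cla_i+(-\cLambda^{\on{pos}})$. Hence the entire content of the theorem is the singular-support / factorization condition (ii).

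\medskip

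For part (a), when $c$ is rational the line bundle $\CP_{X^\cmu_\ox}^{\otimes c}$ becomes flat with torsion monodromy, and on an appropriate finite étale cover the local system $\oL^\cmu_{\ox,\ol\cla}$ becomes a sign-type local system of finite order. For such local systems the $!$-, $!*$- and $*$-extensions along the diagonal stratification of $X^\cmu_{\ox,\leq\ol\cla}$ have characteristic varieties contained in the conormals to the closed strata where two free points $y_k,y_{k'}$ collide or where a free point collides with an $x_i$. Using the factorization isomorphism \eqref{factorization for arb} to reduce to the local picture at a single marked point, one checks that no conormal to $\oX^n$ appears, so (ii) holds for all $?\in\{!,!*,*\}$.

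\medskip

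For part (b), when $c$ is irrational the monodromy of $\oL^\cmu_{\ox,\ol\cla}$ around the partial diagonals $y_k=y_{k'}$ or $y_k=x_i$ has infinite order. Item (i) follows because the cone of $\CL_{\ox,\ol\cla,!}\to\CL_{\ox,\ol\cla,!*}$ (respectively of $\CL_{\ox,\ol\cla,!*}\to\CL_{\ox,\ol\cla,*}$) is concentrated on these diagonals, and its singular support contributes precisely the forbidden conormal direction along $\oX^n$ --- a local computation modelled on the Kummer D-module on $\BG_m$, whose $j_!$ and $j_*$ differ from $j_{!*}$ exactly by a skyscraper at $0$ with non-trivial monodromy. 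For item (ii) one invokes the equivalence of \cite{FS} between $\FS^c(\cG)$ and $\Rep(u_q(\cG))$, with $q=\exp(\pi ic)$: under this dictionary $\CL_{\ox,\ol\cla,!*}$ corresponds to the tensor product of the irreducible $u_q(\cG)$-modules of highest weights $\cla_1,\dots,\cla_n$. When $q$ is not a root of unity $u_q(\cG)=U_q(\cG)$, and an irreducible highest-weight $U_q(\cG)$-module is finite-dimensional iff its highest weight is dominant; so $\CL_{\ox,\ol\cla,!*}\in\FS^c_\ox$ iff every $\cla_i$ is dominant. Item (iii) then follows immediately from the semi-simplicity of $\Rep(U_q(\cG))$ for $q$ not a root of unity.

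\medskip

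The main obstacle is step (b)(ii): translating condition (ii) on singular supports into the representation-theoretic requirement that each factor of the corresponding $u_q(\cG)$-module be integrable. This is the substance of \cite{FS}, and in the present paper it is invoked as a black box; a self-contained geometric proof would require a delicate analysis of how the infinite-order monodromy of $\oL^\cmu_{\ox,\ol\cla}$ near $x_i$ propagates through the factorization structure to force unbounded growth of conormal components whenever $\cla_i$ fails to be dominant.
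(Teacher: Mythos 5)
Your approach for (b)(ii) matches the paper's: both invoke the dictionary from \cite{FS} between the singular support of $\CL^\cmu_{\ox,\ol\cla,?}$ and the weight structure of the corresponding quantum group module. But your treatments of (a) and (b)(i) miss the actual mechanism, and your treatment of (b)(iii) takes a genuinely different route.

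For (a) and (b)(i) the paper uses the precise form of Theorem II.8.18 of \cite{FS}: the conormal to $\cmu\cdot x$ lies in the singular support of $\CL^\cmu_{x,\cla,!}$ (resp.\ $\CL^\cmu_{x,\cla,*}$, $\CL^\cmu_{x,\cla,!*}$) if and only if $\cmu$ is a weight of the Verma (resp.\ dual Verma, irreducible) $u_q$-module of highest weight $\cla$. When $c$ is rational, $u_q$ is finite-dimensional, so all three modules have finitely many weights and condition (ii) holds; when $c$ is irrational the Verma and dual Verma modules have infinitely many weights, so (ii) fails for the $!$- and $*$-extensions. Your argument for (a) does not engage with this. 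You assert that on an \'etale cover ``no conormal to $\oX^n$ appears,'' but that is false --- the conormal does appear at $\cla\cdot x$ itself, for the $!*$-extension, for every $c$ --- and in any case condition (ii) only demands \emph{finitely many} offending $\cmu$, not none. The observation that the characteristic variety is contained in conormals to strata is true for any D-module smooth along a stratification and carries no information. Similarly for (b)(i) you point to a single forbidden conormal contributed by the cone of $\CL_{!}\to\CL_{!*}$, but a finite set of bad $\cmu$'s would still be allowed under (ii); what is needed is that the set of bad $\cmu$'s is \emph{infinite}, and this is exactly the statement that the Verma module's weight set is infinite when $q$ is not a root of unity.

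For (b)(iii), you propose to deduce semi-simplicity of $\FS^c_\ox$ from the equivalence $\FS^c(\cG)\simeq\Rep(u_q(\cG))$ and the semi-simplicity of $\Rep(U_q(\cG))$ at generic $q$. The paper instead gives a direct geometric argument (copied from \cite{FS}, Sect.\ III.18), built on \lemref{hom computation}: for $\cla_1=\cla_2$ a nonsplit self-extension would have to be visible on $\oX^\cmu_{x,\leq\cla}$ for $\cmu\gg 0$, contradicting the factorization constraint; for $\cla_1\neq\cla_2$ one checks that the $!$- and $*$-restrictions of $\CL^\cmu_{x,\cla_2,!*}$ to $X^\cmu_{x,\leq\cla_1}$ vanish by a Kummer-monodromy computation. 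Your route via \cite{FS} is legitimate and shorter if one is willing to treat the full equivalence with $\Rep(u_q(\cG))$ as a black box; the paper's argument is more self-contained and uses only the weaker singular-support input (Theorem II.8.18) plus \lemref{hom computation}, which is the structure the paper wants to keep visible since it reappears in the proof of \thmref{main}.
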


\ssec{}

We can now formulate our main theorem:

\begin{thm}  \label{main}
Let $c$ be irrational. Then
there exists an equivalence of abelian categories
$$\Whit^c_n\to \FS^c_n.$$
\end{thm}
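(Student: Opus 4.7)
The plan is to construct the equivalence explicitly as the functor $\sG$ described in the overview of \secref{Zast}: for $\CF\in \Whit^c_n$, one pulls $\CF$ back to the Zastava space $\CZ^\cmu_n$ along the forgetful map $\CZ^\cmu_n\to \fW_n$, and then sets $\sG(\CF)^\cmu := \pi_*(\text{pullback})$, where $\pi:\CZ^\cmu_n\to X^\cmu_n$ is the natural projection. The pullback makes sense with the prescribed twist because the pull-back of $\CP_{\fW_n}$ to $\CZ^\cmu_n$ agrees (via $\pi$) with $\CP_{X^\cmu_n}$. First I would verify that $\sG$ is well-defined as a functor to $\FS^c_n$: the factorization structure \eqref{factorization for arb} follows from the factorization of $\CZ^\cmu_n$ over $X^\cmu_n$ combined with \propref{factor of Whit}; perversity of the pushforward (so $\sG$ takes values in the heart) comes from \propref{perversity}; the identification of the "vacuum component" with $\CL^\cmu_\emptyset$ is exactly \thmref{F0}; and the support/singular support conditions cutting out $\FS^c_n$ inside $\wt\FS{}^c_n$ follow from the explicit geometry of the strata $\fW_{\ox,\ol\cla}$ and the dimension bounds in the proof of \propref{perversity}.

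Next I would prove equivalence with the pole points fixed, i.e.\ show $\sG:\Whit^c_\ox\to \FS^c_\ox$ is an equivalence. The strategy is to compare irreducibles using \lemref{descr of irr in Whit} and \lemref{descr of irr in FS}. Namely, $\sG$ should send $\CF_{\ox,\ol\cla,!*}$ to $\CL_{\ox,\ol\cla,!*}$ for dominant $\ol\cla$; this is checked stratum-by-stratum, using that on the open stratum $\fW_{\ox,\ol\cla}$ the pushforward along $\pi$ reduces (by factorization) to the cohomology computation already performed for $\CF_\emptyset$ in \thmref{F0}. Then the crucial input is semi-simplicity: for irrational $c$, \thmref{which objects}(b.iii) says $\FS^c_\ox$ is semi-simple, and a parallel argument (combining \lemref{descr of irr in Whit}(b) with \thmref{cleanness}, which forces the canonical maps \eqref{non-cleanness} to be isomorphisms since they coincide with their Verdier duals under $\sG$) gives semi-simplicity of $\Whit^c_\ox$. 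Given matching sets of irreducibles and semi-simplicity on both sides, $\sG$ is automatically an equivalence of abelian categories in the fiber over $\ox$.

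Finally I would promote the fiberwise equivalence to an equivalence over $X^n$. Both $\Whit^c_n$ and $\FS^c_n$ are built from objects over the configuration base $X^n$ together with connections, so the remaining issue is to show that the fiberwise equivalences glue into a functor on the total categories, and that this global $\sG$ is fully faithful and essentially surjective. Full faithfulness follows from base-change for $\pi$ combined with Verdier self-duality of $\sG$ supplied by \thmref{cleanness}: the fact that $\pi_!\simeq \pi_*$ on the relevant twisted D-modules ensures that $\Hom$-sheaves over $X^n$ are computed correctly and commute with restriction to arbitrary points $\ox$. Essential surjectivity then reduces to the fiberwise statement, using that the description of irreducibles from \lemref{descr of irr in FS} is uniform in $\ox$ and that \thmref{which objects}(b.ii) singles out exactly the irreducibles coming from dominant $\ol\cla$, matching \lemref{descr of irr in Whit}(a).

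The main obstacle, I expect, is the gluing step, specifically the use of \thmref{cleanness}: one needs the direct image under $\pi$ to agree with direct image with compact supports in order to control the behavior of $\sG$ as pole points collide, and also to deduce semi-simplicity of $\Whit^c_\ox$ from that of $\FS^c_\ox$. Without cleanness, the cones of the maps $\CF_{\ox,\ol\cla,!}\to \CF_{\ox,\ol\cla,!*}\to \CF_{\ox,\ol\cla,*}$ could be nonzero and the argument matching irreducibles would fail to extend to the derived level needed for base change. All other steps (perversity, factorization, identification of $\CL^\cmu_\emptyset$) are local or stratum-wise and should follow directly from the technical results cited above.
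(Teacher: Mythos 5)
Your overall architecture matches the paper's: construct $\sG$ via Zastava spaces, establish perversity and factorization, prove the equivalence for fixed $\ox$ by comparing irreducibles and using semi-simplicity, then glue over $X^n$. However, there is a genuine gap in the middle step, namely in how you derive semi-simplicity of $\Whit^c_\ox$.

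You write that \thmref{cleanness} ``forces the canonical maps \eqref{non-cleanness} to be isomorphisms since they coincide with their Verdier duals under $\sG$.'' This does not follow. \thmref{cleanness} asserts that $\pi^\cmu_!\simeq \pi^\cmu_*$ on pullbacks of Whittaker objects, so $\sG$ commutes with Verdier duality; applied to the map $\CF_{\ox,\ol\cla,!}\to\CF_{\ox,\ol\cla,*}$, this gives that $\sG(\CF_{\ox,\ol\cla,!})$ and $\sG(\CF_{\ox,\ol\cla,*})$ are Verdier duals, which is vacuous (those objects are already Verdier dual before applying $\sG$). One cannot conclude from this that the map is an isomorphism. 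Faithfulness of $\sG$ together with semi-simplicity of the target is also not enough: a faithful exact functor to a semi-simple category does not force the source to be semi-simple. The paper's actual proof of \propref{Whit clean} is a separate induction: it shows $\sG_\ox$ is faithful (via \thmref{other sheaves}), reduces to showing the kernel $\CF'$ of $\CF_{\ox,\ol\cla,!}\to\CF_{\ox,\ol\cla,!*}$ satisfies $\sG_\ox(\CF')=0$, uses semi-simplicity of $\FS^c_\ox$ (\thmref{which objects}(b)) to split the resulting short exact sequence, restricts the support of $\sG_\ox(\CF')^\cmu$ to the point $\cmu\cdot x$ by factorization, and then invokes a direct vanishing of the $0$-th cohomology of the $!$-fiber, computed in \secref{other sheaves fibers}. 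That fiber computation (which in turn rests on the $T(\CO_x)$-equivariance argument and the irrationality of $c$) is the irreplaceable ingredient you omit; \thmref{cleanness} alone is not a substitute for it.

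The gluing step is also vaguer than the paper's: the paper proceeds by induction on $n$ and a devissage between the subcategory $\Whit^c_{Diag(n)}$ supported on the diagonal divisor and the generic part $\Whit^c_{\overset{\circ}n}$, using a pro-object $(j^{poles})_!(\CF)$ and an explicit $\Ext^0,\Ext^1$ vanishing criterion in which \thmref{cleanness} enters via the $(\pi^\cmu_!,\pi^{\cmu!})$ adjunction. Your phrase ``base-change plus Verdier self-duality'' points in the right direction but stops short of the actual mechanism. These two gaps should be filled before the proposal can be considered a proof.
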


\ssec{}      \label{old FS}
Let us now explain the relation between our set-up and that of \cite{FS}. 
The difference is that in {\it loc. cit.} the authors work with D-modules
on the spaces $X^\cmu_{n,\leq \ol\cnu}$, rather than with twisted D-modules.

Let us again fix an $n$-tuple of distinct points $x_1,...,x_n$ and
identify the line bundle $\CP_{X^\cmu_n}|_{X^\cmu_{\ox,\leq \ol\cnu}}$
explicitly. Let 
$$\cmu-\underset{i=1,...,n}\Sigma\, \cnu_i=-\underset{\imath}\Sigma\, m_\imath\cdot \check\alpha_\imath$$
The scheme $X^\cmu_{\ox,\leq \ol\cnu}$ can be identified
with the corresponding product of symmetric powers,
$\underset{\imath}\Pi\, X^{(m_\imath)}$.

The line bundle $\CP_{X^\cmu}|_{X^\cmu_{\ox,\leq \ol\cnu}}$ is then
$$\CO_{X^\cmu_{\ox,\leq \ol\cnu}}\left(-\underset{\imath}\Sigma\, d_\imath\cdot \Delta_\imath-
\underset{\imath_1\neq \imath_2}\Sigma\, d_{\imath_1,\imath_2}\cdot 
\Delta_{\imath_1,\imath_2}-\underset{\imath,j=1,...,n}\Sigma\, d_{\imath,j}\cdot \Delta_{\imath,j}\right),$$
where $\Delta_\imath$ is the diagonal divisor on $X^{(m_\imath)}$,
$\Delta_{\imath_1,\imath_2}$ is the incidence divisor of $X^{(m_{\imath_1})}\times
X^{(m_{\imath_2})}$, $\Delta_{\imath,j}$ the incidence divisor on $X^{(m_\imath)}\times x_j$,
and
$$d_\imath=(\check\alpha_\imath,\check\alpha_\imath)_{\frac{Kil}{2}},\,\,
d_{\imath_1,\imath_2}=(\alpha_{\imath_1},\alpha_{\imath_2})_{Kil},\,\,
d_{\imath,j}=(\alpha_{\imath},\cnu_j)_{Kil}.$$

\medskip

Let us assume now that our curve $X$ is $\BA^1$ (as in \cite{FS}), with coordinate $t$.
We will denote by $t^{\imath}_1,...,t^{\imath}_{m_\imath}$ the
corresponding functions on $X^{m_\imath}$. The function
\begin{multline*}
\sff^{\cmu}_{\ol\cnu}:=\underset{\imath}\Pi\, \left(\underset{1\leq k_1,k_2\leq m_\imath}
\Pi\, (t^\imath_{k_1}-t^\imath_{k_2}) \right)^{2d_\imath}\cdot
\underset{\imath_1\neq \imath_2}\Pi\,
\left(\underset{1\leq k_1\leq m_{\imath_1}, 1\leq k_2\leq m_{\imath_2}}
\Pi\, (t^{\imath_1}_{k_1}-t^{\imath_2}_{k_2})\right)^{d_{\imath_1,\imath_2}}
\cdot \\
\cdot \underset{\imath,j=1,...,n}\Pi\, \left(\underset{1\leq k\leq m_\imath}\Pi\, 
(t^k_\imath-x_j)\right)^{d_{\imath,j}}
\end{multline*}
on $X^\cmu_{\ox,\leq \ol\cnu}$ trivializes the line bundle in question.
This allows to view the twisted D-modules $\CL^\cmu_\ox$ comprising
an object $\CL_\ox\in \wt{\FS}{}^c$ as plain D-modules.

However, the definition of the standard object, such as $\oL^\cmu_{\ox,\ol\cla}$
is more complicated. The latter equals the product of the sign local system and
the pull-back by means of the map
$$\oX^\cmu_{\ox,\leq \ol\cnu}\overset{\sff^{\cmu}_{\ol\cnu}}\to \BG_m$$
of the D-module $\Psi(c)$ on $\BG_m$.
Here $\Psi(c)$ is the Kummer D-module, generated by one section $"z^c"$
and satisfying the relation
\begin{equation} \label{Kummer}
z\partial_z\cdot "z^c"=c\cdot "z^c",
\end{equation}
where $z$ is a coordinate on $\BG_m$.

\ssec{} \label{proofs FS}
 
This subsection is included in order to navigate the reader in the
structure of the proofs of results from \cite{FS} that are used in this
paper. 

\medskip

To simplify the notation, let as assume that $n=1$, i.e., $\ox=\{x\}$. 
The main hard result that we
use is that the the objects $\CL_{x,\cla,!*}$ for $\cla\in \cLambda^+$
belong to $\FS^c_x$, i.e., they satisfy the singular support condition. 
This is established in Theorem II.8.18 of {\it loc. cit.} This theorem
amounts to an explicit calculation of vanishing cycles.

In fact, that theorem says that for any $\cla\in \cLambda$ and $c$,
the cotangent space to the point $\cmu\cdot x\in X^\cmu_x$
belongs to the singular support of $\CL^\cmu_{x,\cla,!*}$ (resp.,
$\CL^\cmu_{x,\cla,!}$, $\CL^\cmu_{x,\cla,*}$) if and only if
$\cmu$ appears as a weight of the irreducible module
(resp., Verma module, dual Verma module) of highest weight
$\cla$ over the corresponding quantum group.

\medskip

This implies point (b) of \lemref{descr of irr in FS}, as well as
points (a), (b,i), (b,ii) of \thmref{which objects}. 

\medskip

Let us now comment on how to deduce that $\FS^c_x$ is semi-simple
for $c$ irrational (we will just copy the argument from {loc. cit.}
Sect. III.18). The proof relies on the following (Lemma III.5.3 of {loc. cit.}):

\begin{lem} \label{hom computation}
Assume that both $\CL_1,\CL_2,\in \FS^c_x$ are supported
on $X^\cmu_x$ for $\cmu$ belonging to a single coset
in $\cLambda/\on{Span}(\check\Delta)$. Then there exists
$\cmu_0$ such that for all $\cmu\geq \cmu_0$ the map
$$\Hom_{\FS^c_x}(\CL_1,\CL_2)\to
\Hom_{\fD\mod^c(X^\cmu_x)}(\CL^\cmu_1,\CL^\cmu_2)$$
is an isomorphism.
\end{lem}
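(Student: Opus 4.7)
The plan is to use the factorization axiom to identify an $\FS^c_x$-morphism with a single D-module Hom at a well-chosen level. An element of $\Hom_{\FS^c_x}(\CL_1,\CL_2)$ is a compatible family $\phi=\{\phi^\cnu\}_{\cnu\in\cLambda}$ satisfying, for each decomposition $\cnu=\cmu_1+\cmu_2$ with $\cmu_1\in-\cLambda^{pos}$, the factorization identity
$$\on{add}_{\cmu_1,\cmu_2,disj}^*(\phi^\cnu)=\id_{\CL^{\cmu_1}_\emptyset}\boxtimes\phi^{\cmu_2}$$
on the \'etale cover $(X^{\cmu_1}_\emptyset\times X^{\cmu_2}_x)_{disj}$. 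The map of the lemma sends $\phi\mapsto\phi^\cmu$; the task is to show it is bijective for $\cmu$ suitably large.

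First I would use condition (i) of the definition of $\FS^c_x$ to fix an $n$-tuple $\ol\cnu=(\cnu_1,\dots,\cnu_n)$ bounding the supports of both $\CL_1$ and $\CL_2$, and set $\cmu_0:=\cnu_1+\cdots+\cnu_n$, so that $\CL^\cnu_i$ vanishes unless $\cnu\leq\cmu_0$. For any such $\cnu$, the decomposition $\cnu=(\cnu-\cmu_0)+\cmu_0$ has $\cnu-\cmu_0\in-\cLambda^{pos}$, and the corresponding map
$$\on{add}_{\cnu-\cmu_0,\cmu_0,disj}:(X^{\cnu-\cmu_0}_\emptyset\times X^{\cmu_0}_x)_{disj}\to X^\cnu_x$$
is \'etale onto an open $U^\cnu\subset X^\cnu_x$. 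Taking the slice where the $X^{\cmu_0}_x$-coordinate equals the divisor $\sum_i\cnu_i\cdot x_i$ and letting $D_1$ vary over $X^{\cnu-\cmu_0}_\emptyset$ with support disjoint from $\{x_1,\dots,x_n\}$, one sees that $U^\cnu$ meets the support $X^\cnu_{x,\leq\ol\cnu}\simeq X^{\cnu-\cmu_0}_\emptyset$ in a dense open, namely the complement of the incidence divisors with the $x_i$.

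Injectivity is then immediate: if $\phi^{\cmu_0}=0$, the factorization identity forces $\phi^\cnu|_{U^\cnu}=0$, and condition (ii) on singular support rules out a nonzero map supported on the complementary incidence locus, so $\phi^\cnu=0$. For surjectivity, given $\psi:\CL^{\cmu_0}_1\to\CL^{\cmu_0}_2$, I would define $\phi^\cnu$ on $U^\cnu$ as the descent of $\id_{\CL^{\cnu-\cmu_0}_\emptyset}\boxtimes\psi$, then prolong it to the full support of $\CL^\cnu_i$. The hard part will be this extension step: it rests on the fact that $\CL^{\cnu-\cmu_0}_\emptyset=j^{Diag}_{!*}(\oL^{\cnu-\cmu_0}_\emptyset)$ is itself an intermediate extension, which together with condition (ii) forces each $\CL^\cnu_i$ to be the intermediate extension of its restriction to $U^\cnu$, so the partial map prolongs canonically by the universal property of $!*$. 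Compatibility of the resulting family $\{\phi^\cnu\}$ under arbitrary decompositions $\cnu=\cmu'_1+\cmu'_2$ then follows from associativity of the factorization isomorphism \eqref{basic sheaf factorization}, by refining the decomposition through $\cmu_0$.
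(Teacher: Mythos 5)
The paper does not actually prove this lemma; it refers to [FS], Lemma~III.5.3, remarking only that the proof ``is not difficult, but uses condition~(ii) in an essential way.'' So there is no in-paper argument to compare against, and I am assessing your proposal on its own merits. Your strategy---using factorization to pin a morphism in $\FS^c_x$ down to a single D-module component---is the expected approach, and your identification of condition~(ii) as the decisive input is correct. But two steps are not justified.

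The central gap is the extension step in the surjectivity argument. You assert that ``condition~(ii) forces each $\CL^\cnu_i$ to be the intermediate extension of its restriction to $U^\cnu$.'' This is not established. Condition~(ii), as stated, controls whether the conormal to the single deepest point $\Sigma\,\cmu_i\cdot x_i$ lies in the singular support, and it does so only for all but finitely many $\cmu$; whereas the boundary $X^\cnu_{\ox,\leq\ol\cnu}\setminus U^\cnu$ you need to cross is a codimension-one divisor, not a point. Moreover, the argument must go through \emph{without} already knowing that $\FS^c_{\ox}$ is semi-simple---that would be circular, since the present lemma is an ingredient in the semi-simplicity proof---so you cannot dismiss the possibility that $\CL^\cnu$ is a non-split extension failing the IC property. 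Even the weaker statement that would suffice (no quotient of $\CL^\cnu_1$ and no sub-object of $\CL^\cnu_2$ supported on the boundary) is not visibly a direct corollary of condition~(ii); it requires combining the \emph{recursive} form of condition~(ii) with factorization along the strata of the incidence divisor, and your write-up should make that mechanism explicit. The same issue recurs in your injectivity step, where you invoke condition~(ii) to rule out a map supported on the incidence locus.

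Secondly, the choice $\cmu_0:=\cnu_1+\cdots+\cnu_n$ is fragile. At that degree $X^{\cmu_0}_{\ox,\leq\ol\cnu}$ is a single point, and if $\ol\cnu$ is not a \emph{sharp} support bound then $\CL^{\cmu_0}_i$ can vanish even though $\CL_i\neq 0$, making the target $\Hom$ zero while the source is not; you must insist $\ol\cnu$ is tight. You also only check the isomorphism at $\cmu=\cmu_0$ itself, whereas the lemma asserts it on a whole cone of degrees, and you do not address how the descending induction passes through the finitely many degrees that condition~(ii) allows to be exceptional (the top degree itself being one of them, since that is where the highest-weight ``vanishing cycle'' lives). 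Both points are likely fixable, but as written they leave the argument incomplete.
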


The proof of this lemma is not difficult, but we emphasize that it
uses condition (ii) that singles out $\FS^c_x$ inside $\wt{\FS}{}^c_x$ in 
an essential way.

\medskip

To prove the semi-simplicity of $\FS^c_x$ we have to show that 
for any $\cla_1,\cla_2\in \cLambda^+$,
$$Ext^1_{\FS^c_x}(\CL_{x,\cla_1,!*},
\CL_{x,\cla_2,!*})=0.$$
We distinguish two cases.

\medskip

\noindent Case 1: $\cla_1=\cla_2=:\cla$. An extension class gives rise
to an extension of twisted D-modules over $X^\cmu_{x\leq \cla}$.
$$0\to \CL^\cmu_{x,\cla,!*}\to \CM^\cmu\to \CL^\cmu_{x,\cla,!*}\to 0.$$
Applying \lemref{hom computation}, we obtain that for all $\cmu$ that are
sufficiently large, this extension is {\it non-split}. Restricting this
extension to the open subscheme $\oX^\cmu_{x,\leq\cla}$, we obtain
a non-trivial extension of the corresponding local systems. This, however,
contradicts the factorization property.

\medskip

\noindent Case 2: $\cla_1\neq \cla_2$. We will show that 
$$Ext^1_{\fD\mod^c(X^\cmu_x)}(\CL^\cmu_{x,\cla_1,!*},
\CL^\cmu_{x,\cla_2,!*})=0,$$
which would imply our assertion in view of \lemref{hom computation}.

Since the situation is essentially Verdier self-dual, we can assume that
$\cmu_2\geq \cmu_1$. The object $\CL^\cmu_{x,\cla_1,!*}$ 
is supported on the closed sub-scheme $X^\cmu_{x,\leq \cla_1}$.
We claim that both $!$- and $*$- restrictions of $\CL^\cmu_{x,\cla_2,!*}$
to this subscheme are zero. 

By factorization, it is enough to prove the latter assertion for $\cmu=\cla_1$,
in which case $X^\cmu_{x,\leq \cla_1}=\on{pt}$. The assertion is local,
so we can assume that $(x\in X)\simeq (0\in \BA^1)$, and pass from twisted D-modules
to usual D-modules, as in \secref{old FS}. Consider the action of
$\BG_m$ on $\BA^1$, and hence on $(\BA_1)^{\cmu_1}_{0,\leq \cla_2}$
We obtain that $\CL^\cmu_{x,\cla_2,!*}$ is monodromic against
the character sheaf $\Psi(c')$, where
$$c'=c\cdot \left((\cla_2,\cla_2+2\crho)_{\frac{Kil}{2}}-
(\cla_1,\cla_1+2\crho)_{\frac{Kil}{2}}\right).$$ However, the integer
$(\cla_2,\cla_2+2\crho)_{\frac{Kil}{2}}-(\cla_1,\cla_1+2\crho)_{\frac{Kil}{2}}$
is non-zero, and since $c\notin \BQ$, we obtain that $c'\notin \BZ$. Hence, the stalk and
co-stalk of this D-module at $0$ is zero.

\section{Zastava spaces}   \label{Zast}

\ssec{}
In order to prove \thmref{main} we have to be able to pass from (twisted) D-modules
on $\fW_n$ to (twisted) D-modules on $X^\cmu_n$. This will be done using
ind-schemes $\CZ^\cmu_n$ (defined for each $\cmu\in \cLambda$), that map
to both $\fW_n$ and $X^\cmu_n$, and that are called "Zastava spaces".

\medskip

For a coweight $\cmu$ let $\BunBm^\cmu$ denote the stack of $B^-$-bundles of 
degree $(2g-2)\crho-\cmu$. We will think of a point of $\BunBm^\cmu$ as a triple:

\begin{itemize}

\item A $G$-bundle $\fF_G$.

\item A $T$-bundle $\fF_T$ such that for any $\lambda\in \Lambda$ the degree
of the corresponding line bundle $\lambda(\fF_T)$ is $\langle \lambda,(2g-2)\crho-\cmu\rangle$.

\item A collection of surjective bundle maps
$$\kappa^{\lambda,-}:\CV^\lambda_{\fF_G}\to \lambda(\fF_T),$$
which satisfy the Pl\"ucker equations.

\end{itemize}

Let $\fp^-$ and $\fq^-$ denote the natural maps
$$\Bun_G \leftarrow \BunBm^\cmu\to \Bun_T,$$
respectively. Let $\CP_{\BunBm^\cmu}$ denote the line bundle
$\fp^-{}^*(\CP_{\Bun_G})$. Almost by definition we have:

\begin{lem}
The line bundle $\CP_{\BunBm^\cmu}$ is isomorphic to the
pull-back under $\fq^-$ of the line bundle $\CP_{\Bun_T}$
over $\Bun_T$ (see \secref{intr line bundle}).
\end{lem}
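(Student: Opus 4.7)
The plan is to compute $\fp^{-*}(\CP_{\Bun_G})$ explicitly, fiber by fiber, using the $B^-$-reduction to split the adjoint bundle, and then compare the result with the explicit formula for $\CP_{\Bun_T}$ given in \secref{intr line bundle}. The construction is canonical enough that naturality across $\BunBm^\cmu$ will be automatic once one sees the identification at the level of fibers.

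More concretely, fix a point $(\fF_G,\fF_T,\kappa^{\lambda,-})\in \BunBm^\cmu$, and let $\fF_{B^-}$ denote the associated $B^-$-bundle. The two-step filtration $\fn^-\subset \fb^-\subset \fg$ of $B^-$-representations induces a three-step filtration of $\fg_{\fF_G}$ whose associated graded is
$$\fn^-_{\fF_{B^-}}\oplus (\fh\otimes \CO_X)\oplus \fn_{\fF_{B^-}},$$
since the adjoint action of $T$ on $\fh$ is trivial. Further, $\fn^-$ (resp.\ $\fn$) as a $B^-$-representation has a filtration whose associated graded is the direct sum of the root-space line bundles, which induce the line bundles $\alpha(\fF_T)^{-1}$ for $\alpha\in\Delta^+$ (resp.\ $\alpha(\fF_T)$ for $\alpha\in\Delta^+$). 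Taking determinants of $R\Gamma$ along these filtrations and using $\Delta=\Delta^+\sqcup(-\Delta^+)$, I obtain a canonical isomorphism
$$\det R\Gamma(X,\fg_{\fF_G})\;\cong\;\Bigl(\bigotimes_{\alpha\in \Delta}\det R\Gamma(X,\alpha(\fF_T))\Bigr)\otimes \det R\Gamma(X,\CO_X)^{\otimes \dim{\mathfrak t}}.$$

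Substituting this into the defining formula for $\CP_{\Bun_G}|_{\fF_G}$ from \secref{det line}, the factor $\det R\Gamma(X,\CO_X)^{\otimes \dim{\mathfrak t}}$ cancels the last factor in that formula, and the terms $\bigl(\bigotimes_{\alpha}\det R\Gamma(X,\omega^{\langle\alpha,\crho\rangle})\bigr)^{-1}$ are already present on both sides. What remains is exactly the formula for $\CP_{\Bun_T}|_{\fF_T}$ from \secref{intr line bundle} (pulled back to $\BunBm^\cmu$ via $\fq^-$), which is precisely the reason the ``constant'' correction factors were built into the definitions. This establishes the isomorphism of fibers.

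The only subtlety — and what I view as the part to be careful with — is verifying that the above identification is canonical (not merely up to a scalar) and varies algebraically in families over $\BunBm^\cmu$. This is not really an obstacle: the filtration on $\fg$ as a $B^-$-module, and the further filtration on $\fn^\pm$ by $T$-weight, are $B^-$-equivariant, so they determine a filtration of the adjoint bundle $\fg_{\fF_G}$ over $\BunBm^\cmu\times X$, canonically and compatibly with base change; the determinant of cohomology is multiplicative across short exact sequences of perfect complexes, yielding the desired isomorphism of line bundles on $\BunBm^\cmu$.
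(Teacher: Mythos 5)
Your proof is correct, and it is precisely what the paper means by ``almost by definition'': the $B^-$-filtration on $\fg_{\fF_G}$, multiplicativity of the determinant of cohomology, and cancellation of the constant normalization factors built into $\CP_{\Bun_G}$ and $\CP_{\Bun_T}$. The paper states the lemma without proof, so your argument fills in exactly the intended computation.
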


\ssec{}

We let $\CZ^\cmu_n$ denote the open sub-stack in the product
$\fW_n\underset{\Bun_G}\times \BunBm^\cmu$ that corresponds to
the condition that the composed (meromorphic) maps
\begin{equation} \label{composition of kappas}
\omega^{\langle \lambda,\crho\rangle}\overset{\kappa^\lambda}\to
\CV^\lambda_{\fF_G}\overset{\kappa^{\lambda,-}}\to \lambda(\fF_T)
\end{equation}
are non-zero. Let $'\fp^-$, $'\fp$ denote the projections $\CZ^\cmu_n\to \fW_n$
and $\CZ^\cmu_n\to \BunBm^\cmu$, respectively.

\medskip

Taking the zeroes/poles of the maps \eqref{composition of kappas}, 
we obtain a natural map
$$\pi^\cmu:\CZ^\cmu_n\to X^\cmu_n.$$

\ssec{}

The next three assertions repeat \cite{BFG}, Sect. 2.16
(see also \cite{BFGM}, Sect. 2 for a less abstract treatment):

\begin{prop}  \label{nature of Zastava}
Let $\{\ox,\fF_G,\fF_T,(\kappa^\lambda),(\kappa^{\lambda,-})\}$ be a point
of $\CZ^\cmu_n$, and let $D\in X^\cmu_n$ be its image under $\pi^\cmu$.
Then the restriction of $\fF_G$ to the open curve $X-\on{supp}(D)$
is canonically isomorphic to $\fF_G=\omegacrho\overset{T}\times G$,
with the tautological maps $\kappa^\lambda,\kappa^{\lambda,-}$.
\end{prop}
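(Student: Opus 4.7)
The plan is to extract from the Plücker data $(\kappa^\lambda)$ and $(\kappa^{\lambda,-})$ a pair of transversal reductions of $\fF_G|_{X-\on{supp}(D)}$ to $B$ and $B^-$, and then invoke the standard fact that such a pair canonically amounts to a reduction to $T=B\cap B^-$.

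By construction of $\pi^\cmu$, the $\cLambda$-valued divisor $D$ records the zeros and poles of the meromorphic line-bundle maps $\kappa^{\lambda,-}\circ\kappa^\lambda\colon\omega^{\langle\lambda,\crho\rangle}\to\lambda(\fF_T)$ (for all dominant $\lambda$), with all poles confined to $\ox\subset\on{supp}(D)$. Over the open curve $X-\on{supp}(D)$ every such composition is therefore a regular nowhere-vanishing map between line bundles, hence an isomorphism. In particular each $\kappa^\lambda$ is an injective bundle map on $X-\on{supp}(D)$: a zero of $\kappa^\lambda$ would force a zero of the composition (since $\kappa^{\lambda,-}$ is a vector-bundle surjection), putting that point into $\on{supp}(D)$. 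The Pl\"ucker relations then promote $(\kappa^\lambda)$ to a genuine reduction of $\fF_G|_{X-\on{supp}(D)}$ to $B$ with induced $T$-bundle $\omegacrho$, while $(\kappa^{\lambda,-})$ already defines a reduction to $B^-$ on all of $X$ with induced $T$-bundle $\fF_T$. The non-vanishing of each $\kappa^{\lambda,-}\circ\kappa^\lambda$ at every point of $X-\on{supp}(D)$ is exactly the pointwise condition that these two flag data lie in the open cell $B\cdot B^-\subset G$, i.e., that they are transversal there.

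Since the locus of transversal pairs inside $B\backslash G\times B^-\backslash G$ is canonically the $T$-torsor $T\backslash G$, the transversal pair of reductions on $X-\on{supp}(D)$ produces a canonical reduction of $\fF_G|_{X-\on{supp}(D)}$ to $T$. The resulting $T$-bundle agrees simultaneously with $\omegacrho$ (the $T$-quotient of the $B$-reduction) and with $\fF_T$ (the $T$-quotient of the $B^-$-reduction), yielding both a canonical isomorphism $\omegacrho\simeq\fF_T$ on $X-\on{supp}(D)$ and the desired canonical identification $\fF_G|_{X-\on{supp}(D)}\simeq\omegacrho\overset{T}\times G$. Under this identification, $\kappa^\lambda$ (resp.\ $\kappa^{\lambda,-}$) becomes the tautological inclusion (resp.\ projection) onto the highest-weight line $\lambda(\omegacrho)=\omega^{\langle\lambda,\crho\rangle}$. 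The one place requiring care — which I expect to be the main technical obstacle — is the promotion of the possibly-degenerate Drinfeld-style data $(\kappa^\lambda)$ to a genuine $B$-reduction on $X-\on{supp}(D)$; the surjectivity of $(\kappa^{\lambda,-})$ is precisely what makes this possible, by ensuring that every degeneration of $(\kappa^\lambda)$ is already witnessed by the divisor $D$.
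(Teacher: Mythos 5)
Your argument is correct and is at bottom the same Bruhat-cell observation the paper invokes: the paper cites the fact that $N\backslash \oG/B^-\simeq \on{pt}$ (with $\oG$ the big cell), which is the statement that the data of an $\omegacrho$-twisted $N$-structure together with a transversal $B^-$-reduction is canonically trivializable; your route unfolds this as ``transversal locus in $(G/B)\times(G/B^-)\simeq G/T$'' and then separately matches the resulting $T$-reduction against both $T$-quotients. One minor remark: the parenthetical ``since $\kappa^{\lambda,-}$ is a vector-bundle surjection'' is not the point where that surjectivity is used — a zero of $\kappa^\lambda$ forces a zero of the composition regardless — but nonvanishing of the composition on $X-\on{supp}(D)$ does correctly give you both the injectivity of the $\kappa^\lambda$ and the transversality, which is what you need.
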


\begin{proof}
This is just the fact that the stack $N\backslash \oG/B^-$ is isomorphic
to $\on{pt}$, where $\oG$ denotes the open Bruhat cell in $G$.
\end{proof}

\begin{cor}
The (ind)-stack $Z^\cmu_n$ is in fact an (ind)-scheme.
\end{cor}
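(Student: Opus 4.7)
The strategy is to use \propref{nature of Zastava} to kill all automorphism groups of points of $\CZ^\cmu_n$, thereby promoting the ind-stack to an ind-space, and then to identify each finite-type piece concretely as a scheme.

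First, I would fix an $S$-point $(\ox, \fF_G, \fF_T, \kappa^\lambda, \kappa^{\lambda,-})$ of $\CZ^\cmu_n$ with divisor image $D$ and consider a self-isomorphism, i.e., a pair $(\phi_G, \phi_T)$ of bundle automorphisms compatible with all the $\kappa$'s. Setting $U := X - \on{supp}(D)$, the proposition canonically identifies $\fF_G|_U$ with $\omegacrho \overset{T}\times G$ and identifies both generalized reductions with the tautological ones coming from the open Bruhat cell $\oG \subset G$. Under this identification $\phi_G|_U$ preserves both the tautological $N$-reduction and the tautological $B^-$-reduction; since the automorphism group of a point of $N\backslash \oG/B^-= \on{pt}$ is trivial, we conclude $\phi_G|_U = \on{id}$, and hence $\phi_T|_U = \on{id}$ via any one surjective $\kappa^{\lambda,-}$.

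Second, I would extend this identity from $U$ to all of $X$. An automorphism of a principal bundle on $X \times S$ is a section of an affine group scheme over $X \times S$; agreement on the fiberwise dense open $U \times S$ forces agreement everywhere by separatedness. Thus the automorphism sheaf of any $S$-point of $\CZ^\cmu_n$ is trivial, and $\CZ^\cmu_n$ is a sheaf of sets on the fpqc site.

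Third, I would invoke the explicit finite-type filtration. Bounding the orders of the poles of the $\kappa^\lambda$ (in parallel with the filtration of $X^\cmu_n$ by the subschemes $X^\cmu_{n, \leq \ol\cla}$) presents $\CZ^\cmu_n$ as a filtered colimit of open substacks of finite-type algebraic stacks. Each finite-type piece, having trivial automorphisms, is an algebraic space; and by the concrete Pl\"ucker-type description of Zastava spaces already in the literature (see e.g.\ \cite{BFGM}) it is actually a quasi-projective scheme. The colimit is then an ind-scheme, as claimed. The only non-formal step is the rigidity argument in the first paragraph, and that is essentially the content of the cited proposition; I expect no substantive obstacle.
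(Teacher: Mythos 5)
Your proposal is correct and takes essentially the same route as the paper, which treats the corollary as an immediate consequence of \propref{nature of Zastava} and defers the details to \cite{BFG}, Sect.~2.16 and \cite{BFGM}, Sect.~2. You have simply unwound what is implicit there: rigidity over $U=X-\on{supp}(D)$ from $N\backslash\oG/B^-\simeq\on{pt}$, extension of the identity automorphism from the fiberwise-dense open to all of $X\times S$ by separatedness of the automorphism group scheme, and then the passage from ind-algebraic-space to ind-scheme via the finite-type Pl\"ucker/Zastava description in the references.
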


\begin{prop}  \label{factorization of Zastava}
For $\cmu=\cmu'+\cmu''$ there exists a canonical
isomorphism of stacks
\begin{equation} \label{Zast factor}
\left(X^{\cmu'}_\emptyset \times X^{\cmu''}_n\right)_{disj}
\underset{X^\cmu_n}\times \CZ^\cmu_n\simeq
\left(X^{\cmu'}_\emptyset \times X^{\cmu''}_n\right)_{disj}
\underset{(X^{\cmu'}_\emptyset \times X^{\cmu''}_n)}\times
\left(\CZ^{\cmu'}_\emptyset\times \CZ^{\cmu''}_n\right).
\end{equation}
\end{prop}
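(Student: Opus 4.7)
The plan is to construct the isomorphism \eqref{Zast factor} by a restriction-and-gluing procedure, with \propref{nature of Zastava} serving as the crucial input: it canonically trivializes any Zastava datum on the complement of its divisor.

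Unpacking the LHS, a point consists of data $(\ox, D', D'', \fF_G, \fF_T, \kappa^\bullet, \kappa^{\bullet,-})$ where $D'\in X^{\cmu'}_\emptyset$ and $D''\in X^{\cmu''}_n$ have $\on{supp}(D')$ disjoint from $\on{supp}(D'')\cup\ox$, and the remaining data form a point of $\CZ^\cmu_n$ with $\pi^\cmu$-image $D = D'+D''$. I would introduce the open cover $U_1 := X-\on{supp}(D'')$, $U_2 := X-\on{supp}(D')$ of $X$, with overlap $U_1\cap U_2 = X-\on{supp}(D)$, on which \propref{nature of Zastava} canonically identifies the Zastava datum with the tautological $\omegacrho$-twisted datum.

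To define the map LHS $\to$ RHS, I would construct a point of $\CZ^{\cmu'}_\emptyset$ over $D'$ by gluing the restriction of the original datum to $U_1$ (whose divisor there is exactly $D'$) with the tautological $\omegacrho$-twisted datum on $U_2$, the gluing on $U_1\cap U_2$ being the canonical identification from \propref{nature of Zastava}. Symmetrically, I would produce a point of $\CZ^{\cmu''}_n$ over $(\ox, D'')$ by gluing the original datum on $U_2$ (divisor $D''$, poles of $\kappa^\bullet$ at $\ox$) with the tautological datum on $U_1$. For the inverse direction, given points of $\CZ^{\cmu'}_\emptyset$ and $\CZ^{\cmu''}_n$ with disjoint divisor supports, \propref{nature of Zastava} again trivializes each on the complement of its own divisor, and I would glue them across $U_1\cap U_2$ to reconstruct a single Zastava point on $X$ with divisor $D'+D''$ and poles at $\ox$. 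These two constructions are mutually inverse because of the uniqueness and naturality of the canonical trivializations used at each gluing step.

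I do not anticipate a substantial obstacle: every operation is local on $X$ and functorial in families, so the construction extends to the level of ind-stacks; and all the relevant structures ($\fF_T$, the Pl\"ucker relations for $\kappa^\bullet$ and $\kappa^{\bullet,-}$, and the openness condition that the compositions \eqref{composition of kappas} be nonzero) are preserved under restriction and gluing. The argument runs in the spirit of the standard factorization results for Zastava spaces proved in \cite{BFG} and \cite{BFGM}; the only bookkeeping worth care is tracking the decomposition $D = D'+D''$ so that the divisors carried by the two factored Zastava points add up correctly.
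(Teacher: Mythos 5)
Your proof is correct and follows essentially the same route as the paper: the key ingredient in both is \propref{nature of Zastava}, which canonically identifies the restriction of a Zastava datum to the complement of its divisor with the tautological $\omega^{\check\rho}$-twisted one, and both arguments use this to show that gluing across $X-\on{supp}(D'+D'')$ carries no extra information. The paper states this tersely as ``the gluing datum is superfluous''; you spell out the same gluing explicitly with the open cover $U_1=X-\on{supp}(D'')$, $U_2=X-\on{supp}(D')$, which is a faithful unpacking of the paper's one-line argument rather than a different proof.
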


\begin{proof}

Let $D'$ and $D''$ be points of $X^{\cmu'}_\emptyset$ and
$X^{\cmu''}_n$, respectively, with disjoint supports. 

Objects classified by both the LHS and the RHS in \eqref{Zast factor}
are local in $X$. Thus, we can think of the LHS as defining
a certain data on $X-\on{supp}(D')$ and $X-\on{supp}(D'')$ separately,
with a gluing datum over $X-\left(\on{supp}(D')\cup \on{supp}(D'')\right)$.
We have to show that the gluing datum in question is in fact
superfluous, but this follows immediately from \propref{nature of Zastava}.

\end{proof}

\ssec{}

Let us make the following observation:
\begin{lem}  \label{line bundle on Zastava}
The line bundle $'\fp^-{}^*(\CP_{\fW_n})\simeq 
{}'\fp^*(\CP^{\otimes -1}_{\BunBm^\cmu})$
identifies canonically with $\pi^\cmu{}^*(\CP_{X^\cmu_n})$.
\end{lem}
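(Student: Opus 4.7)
\noindent\emph{Proof proposal.}
The plan is to reduce the identification of line bundles on $\CZ^\cmu_n$ to the comparison of two maps $\CZ^\cmu_n \to \Bun_T$, and then invoke \propref{nature of Zastava}.

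First, I would chase the definitions. By construction $\CP_{\fW_n}$ is the inverse of the pull-back of $\CP_{\Bun_G}$ under $\fp:\fW_n \to \Bun_G$. Since $\CZ^\cmu_n$ sits inside the fiber product $\fW_n \times_{\Bun_G} \BunBm^\cmu$, we have $\fp\circ {}'\fp^- = \fp^-\circ {}'\fp$; hence
\[
{}'\fp^-{}^*(\CP_{\fW_n}) \;=\; ({}'\fp^-{}^*\fp^*\CP_{\Bun_G})^{\otimes -1}
\;=\; ({}'\fp^*\fp^-{}^*\CP_{\Bun_G})^{\otimes -1}
\;=\; {}'\fp^*(\CP_{\BunBm^\cmu}^{\otimes -1}).
\]
This already gives the first isomorphism in the statement. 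Applying the previous lemma (which identifies $\CP_{\BunBm^\cmu}$ with $(\fq^-)^*\CP_{\Bun_T}$) yields
\[
{}'\fp^-{}^*(\CP_{\fW_n}) \;\simeq\; (\fq^-\circ {}'\fp)^*(\CP_{\Bun_T}^{\otimes -1}).
\]
Similarly, unwinding the definition of $\CP_{X^\cmu_n} = AJ^*(\CP_{\Bun_T}^{\otimes -1})$ gives $\pi^\cmu{}^*(\CP_{X^\cmu_n}) = (AJ\circ \pi^\cmu)^*(\CP_{\Bun_T}^{\otimes -1})$.

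Thus everything is reduced to producing a canonical $2$-isomorphism between the two maps
\[
\fq^-\circ {}'\fp,\; AJ\circ \pi^\cmu\;:\; \CZ^\cmu_n \;\to\; \Bun_T.
\]
The first sends a point of $\CZ^\cmu_n$ to the $T$-bundle $\fF_T$ obtained from its $B^-$-reduction, while the second sends it to $\omega^{\crho}(-D)$, where $D = \pi^\cmu(\text{point})$ records the zeroes/poles of the compositions \eqref{composition of kappas}. Here \propref{nature of Zastava} does the work: away from $\on{supp}(D)$ the $G$-bundle $\fF_G$ is canonically identified with $\omega^\crho\overset{T}\times G$, which in particular induces a canonical identification of $T$-bundles $\fF_T \simeq \omega^\crho$ over $X - \on{supp}(D)$. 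The relative modification along $\on{supp}(D)$ that turns $\omega^\crho$ into $\fF_T$ is measured, by construction of $\pi^\cmu$, precisely by the divisor $D$; that is, the composition \eqref{composition of kappas} presents $\lambda(\fF_T)$ as $\lambda(\omega^\crho)(-\langle\lambda,D\rangle)$ for every dominant $\lambda$, compatibly with the Plücker relations. This gives the required canonical isomorphism $\fF_T \simeq \omega^\crho(-D) = AJ(\pi^\cmu(\text{point}))$.

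No real obstacle is anticipated: once the diagram chase above is set up, the content lies entirely in the already-proved \propref{nature of Zastava}, and the only point demanding care is bookkeeping of signs and degrees (one should verify that the divisor $D$ produced by $\pi^\cmu$ has total degree $\cmu$, matching the convention $\deg(\fF_T) = (2g-2)\crho - \cmu$, so that the modification $\omega^\crho \rightsquigarrow \omega^\crho(-D)$ indeed lands in $\BunBm^\cmu$). The compatibilities of the various pullbacks with \lemref{line bundle on Zastava} and \lemref{fiber of line bundle} then give the factorization properties of ${}'\fp^-{}^*\CP_{\fW_n}$ for free from those of $\CP_{X^\cmu_n}$, which will be used in later sections.
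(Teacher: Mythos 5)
Your proof is correct and takes the same route as the paper: the paper's entire argument is the commutativity of the square $\CZ^\cmu_n \to \BunBm^\cmu \to \Bun_T$ versus $\CZ^\cmu_n \to X^\cmu_n \xrightarrow{AJ} \Bun_T$, which is exactly the reduction you set up, and your explanation of \emph{why} that square commutes (via \propref{nature of Zastava} and the degree bookkeeping $\fF_T \simeq \omega^\crho(-D)$, $\deg = (2g-2)\crho - \cmu$) simply spells out what the paper leaves implicit. No gap.
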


\begin{proof}
This follows from the fact that the diagram
$$
\CD
\CZ^\cmu_n @>{'\fp}>> \BunBm^\cmu \\
@V{\pi^\cmu}VV   @V{\fq^-}VV  \\
X_n^\cmu @>{AJ}>>  \Bun_T
\endCD
$$
commutes, where the lower horizontal arrow is the Abel-Jacobi map
of \secref{intr line bundle}, i.e, it sends
a divisor $D$ to the $T$-bundle $\omegacrho(-D)$.
\end{proof}

This allows us to define the functors
$$D(\fD^c\mod(\fW_n))\to D(\fD\mod^c(X^\cmu_n)).$$
Let us denote by ${}'\fp^{-,\cmu,\cdot}$ the functor
$D(\fD\mod^c(\fW_n))\to D(\fD\mod^c(\CZ^\cmu_n))$ given by
$$\CF\mapsto ({}'\fp^-)^!(\CF)[-\on{dim.rel.}(\BunBm^\cmu,\Bun_G)].$$

\noindent(We note that this functor {\it essentially} commutes 
with Verdier duality, since the morphism $\fp^-:\BunBm^\cmu\to \Bun_G$ is smooth
for $\cmu$ such $\langle \alpha,\cmu\rangle <-(2g-2)$,
which is what we will be able to assume in practice.)

\medskip

Thus, we can consider the functor
$$\pi^\cmu_*\circ {}'\fp^{-,\cmu,\cdot}:
D(\fD\mod^c(\fW_n))\to D(\fD\mod^c(X^\cmu_n)).$$


\medskip

We will prove:
\begin{prop} \label{perversity}
For all $c$ and $\CF\in \Whit^c_n$ the object
$\pi^\cmu_*({}'\fp^{-,\cmu,\cdot})(\CF))$ is concentrated in the cohomological 
degree $0$.
\end{prop}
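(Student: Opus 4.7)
The plan is to reduce the statement, via factorization of Zastava spaces, to a local cohomological vanishing on the schemes $\CZ^\cmu_{loc}$.

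First, over the open locus of $X^\cmu_n$ consisting of divisors $D=\underset{k}\Sigma\,\cmu_k\cdot y_k$ with all $y_k$ pairwise distinct and disjoint from $\ox$, \propref{factorization of Zastava} identifies the fiber of $\pi^\cmu$ with the product $\underset{k}\Pi\,\CZ^{\cmu_k}_{loc,y_k}$. Using the $(\CN^\mer_\oy,\chi_\oy)$-equivariance, one shows that the pull-back ${}'\fp^{-,\cmu,\cdot}(\CF)$ of a Whittaker sheaf $\CF\in \Whit^c_n$ itself factorizes, restricting on each local factor $\CZ^{\cmu_k}_{loc,y_k}$ (for $y_k\notin \ox$) to the shifted pull-back of the basic object $\CF_\emptyset$. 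This is the content of (the factorization statement) \propref{factor of Whit}; granting it, the problem of perversity for $\pi^\cmu_*({}'\fp^{-,\cmu,\cdot}(\CF))$ over this generic open reduces to showing that $H^\bullet\bigl(\CZ^\cmu_{loc,y},({}'\fp^{-,\cmu,\cdot}\CF_\emptyset)|_{\CZ^\cmu_{loc,y}}\bigr)$ is concentrated in degree $0$ for every $\cmu$ and $y$.

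Second, this local computation is accomplished by a dimension count combined with a cohomological vanishing. The dimension of $\CZ^\cmu_{loc}$ is $\langle 2\crho,-\cmu\rangle$ (see \cite{BFGM}, Sect.~2), which produces the correct upper bound matching the shift $[-\on{dim.rel.}(\BunBm^\cmu,\Bun_G)]$ that was built into the definition of ${}'\fp^{-,\cmu,\cdot}$; stratifying $\CZ^\cmu_{loc}$ by its intersection with the $G[[t]]$-orbits $\Gr_G^\cla$ and restricting the basic Whittaker sheaf to each stratum, one gets cohomology supported in a single degree per stratum. The vanishing of all cohomologies below the expected one --- which is what prevents the direct image from living in negative degrees --- follows by coupling \cite{FGV}, Prop.~7.1.7 (which identifies the Whittaker restriction on a stratum with the pull-back of the exponential $D$-module) with \cite{BFGM}, Prop.~6.4 (which governs the dimensions of the $N^-\ppart$-orbits inside $\CZ^\cmu_{loc}$). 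Together these recover a twisted version of the Satake fiber functor and give the desired concentration.

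Finally, one extends the conclusion from the generic stratum of $X^\cmu_n$ to all of $X^\cmu_n$ by stratifying the base by diagonal incidence and by the type of tangency with the marked points $\ox$, and repeating the same local analysis. Using \lemref{descr of irr in Whit}, a general $\CF\in \Whit^c_n$ is a finite extension of stratum-supported objects $\CF_{\ox,\ol\cla,!*}$, and concentration in degree $0$ is preserved by short exact sequences, so it suffices to treat these irreducibles. The main obstacle is the local vanishing statement in the middle paragraph: the factorization reduction itself is essentially formal given \propref{factor of Whit}, but proving concentration on $\CZ^\cmu_{loc}$ genuinely requires the two cited calculations, and is where the specific structure of the Whittaker equivariance (as opposed to mere holonomicity) is used in an essential way.
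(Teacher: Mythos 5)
Your plan is recognizably the paper's plan --- factorization of Zastava spaces to reduce to the local fiber $\CZ^\cmu_{loc,x}$, a dimension count, and an appeal to \cite{FGV}~Prop.~7.1.7 coupled with \cite{BFGM}~Prop.~6.4 --- but it misstates or omits several of the steps that make the argument close.

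First, you never actually establish that $\pi^\cmu_*({}'\fp^{-,\cmu,\cdot}(\CF))$ sits in \emph{non-positive} degrees, which is half the statement; the remark about the shift $[-\on{dim.rel.}(\BunBm^\cmu,\Bun_G)]$ does not do the job. In the paper this is a separate step: after replacing $\cmu$ by a more negative coweight (possible precisely because of the factorization you cite together with \thmref{F0}(1)), the map $\fp^-$ becomes smooth, so ${}'\fp^{-,\cmu,\cdot}(\CF)$ is a single $\fD$-module, and then one invokes \emph{affineness of} $\pi^\cmu$ to bound the direct image above. Second, the lower bound is a \emph{two-part} estimate, and you collapse it. For an arbitrary $\CF\in\Whit^c_n$ the vanishing of $H^i(\CZ^\cmu_{loc,x},\CF^\cmu_{loc,x})$ for $i<0$ is a pure dimension count: one reduces to $\CF=\CF_{x,\cla,*}$ (note: the $*$-extension, not the intermediate extension $\CF_{\ox,\ol\cla,!*}$ you propose --- this is what lets one view $\CF^\cmu_{loc,x}$ as a $*$-extension of a lisse complex on $\CZ^{\cla,\cmu}_{loc,x}$ and run a flatness argument over $X^\cmu_{x,\leq\cla}$), and matches $\dim(\CZ^{\cla,\cmu}_{loc,x})=\langle\rho,\cla-\cmu\rangle$ against the degree shift. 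The citations to \cite{FGV}~7.1.7 and \cite{BFGM}~6.4 are used \emph{only} for the sharper, strict vanishing $H^{\leq 0}=0$ in the $\CF_\emptyset$ factors, which is what compensates for the positive codimension of the strata where generic points of the divisor collide or escape $\ox$; you instead attribute the entire lower bound to that pair of references, which does not yield the estimate. Third, the strata of $\CZ^\cmu_{loc,x}$ entering the argument are the intersections $S^\cla\cap S^{-,\cmu}$ with the semi-infinite $N\ppart$-orbits, not $G[[t]]$-orbits $\Gr_G^\cla$; the whole point of the FGV/BFGM input is that the residue character $\chi^0_x$ is non-constant on every irreducible component of $S^0\cap S^{-,\cmu}$. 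Finally, the local cohomology $H^\bullet(\CZ^\cmu_{loc,x},(\CF_\emptyset)^\cmu_{loc,x})$ is \emph{not} concentrated in degree $0$ --- it vanishes in degrees $\leq 0$ --- so the reduction you formulate in your first paragraph is not the correct target statement.
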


In addition, we will prove the following assertion:

\begin{thm}  \label{cleanness} 
For $c$ irrational and any $\CF\in \Whit^c_n$, the object
$$\pi^\cmu_!({}'\fp^{-,\cmu,\cdot})(\CF))\in D(\fD\mod^c(X^\cmu_n))$$
is well-defined, \footnote{The issue here is that the direct image
with compact supports in not a priori defined on non-holonomic
D-modules.} and
the natural morphism
$$\pi^\cmu_!({}'\fp^{-,\cmu,\cdot})(\CF))\to \pi^\cmu_*({}'\fp^{-,\cmu,\cdot}(\CF))$$
is an isomorphism.
\end{thm}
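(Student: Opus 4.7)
The plan is to reduce \thmref{cleanness} to a cleanness statement on a proper compactification, and then to peel off the Whittaker side to land on a purely geometric cleanness statement that can be handled by direct stratification analysis.

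First, I would enlarge $\BunBm^\cmu$ to a Drinfeld-style relative compactification $\BunBmb^\cmu$ over $\Bun_G$, where the Pl\"ucker maps $\kappa^{\lambda,-}:\CV^\lambda_{\fF_G}\to \lambda(\fF_T)$ are only required to be injective as bundle maps generically (i.e.\ allowed to degenerate by Drinfeld structures on the $B^-$-side), and set $\ol\CZ^\cmu_n$ to be the open substack of $\fW_n\underset{\Bun_G}\times \BunBmb^\cmu$ defined by the same non-vanishing of the composition \eqref{composition of kappas} at the generic point. Denote by $'\jmath^-:\CZ^\cmu_n\hookrightarrow \ol\CZ^\cmu_n$ the resulting open embedding and by $\ol\pi^\cmu:\ol\CZ^\cmu_n\to X^\cmu_n$ the extension of $\pi^\cmu$. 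The geometric input I would want first is that $\ol\pi^\cmu$ is \emph{proper}: roughly, fibers parametrize a Drinfeld structure transverse to a fixed $N$-generic structure, and the defect is rigidly bounded by the divisor $D\in X^\cmu_n$, so on each fiber boundedness follows from factorization over $X^\cmu_n$ (using \propref{factorization of Zastava} for the open part and a direct dimension count on the boundary).

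Granting properness, $\ol\pi^\cmu_!={\ol\pi^\cmu}_*$, so the desired equality $\pi^\cmu_!=\pi^\cmu_*$ on ${}'\fp^{-,\cmu,\cdot}(\CF)$ reduces to showing the cleanness statement
\[
{}'\jmath^-_!\bigl({}'\fp^{-,\cmu,\cdot}(\CF)\bigr)\;\seq{}\;{}'\jmath^-_*\bigl({}'\fp^{-,\cmu,\cdot}(\CF)\bigr),
\]
i.e.\ that the pullback from $\fW_n$ extends cleanly across the boundary $\ol\CZ^\cmu_n\setminus \CZ^\cmu_n$. The next reduction is to move the cleanness from $\fW_n$ over to the $B^-$-side. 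Since $'\fp^-$ is the pullback of the smooth morphism $\fp^-:\BunBm^\cmu\to \Bun_G$ along $\fW_n\to\Bun_G$, and since the line-bundle identification of \lemref{line bundle on Zastava} interchanges the $\CP_{\fW_n}$-twist with the $\CP_{\BunBm^\cmu}$-twist, I would factor the statement through the corresponding cleanness on the base: it is enough to show that the \emph{constant} $\CP_{\BunBm^\cmu}^{\otimes c}$-twisted D-module on $\Bun_{B^-}^\cmu$ is clean with respect to the open embedding $\jmath^-:\Bun_{B^-}^\cmu\hookrightarrow \BunBmb^\cmu$. The Whittaker condition on $\CF$ will not enter this last reduction; it is used only to guarantee that the pullback operations in the intermediate step are well-behaved (in particular \propref{perversity} and \propref{factor of Whit} show that ${}'\fp^{-,\cmu,\cdot}(\CF)$ has no higher or lower cohomology and factorizes over the strata of $X^\cmu_n$, so the base-change argument goes through).

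The main obstacle, and the technical heart of the argument, is the cleanness on the base $\BunBmb^\cmu$. I would attack it by stratifying $\BunBmb^\cmu$ according to the defect of the $B^-$-Drinfeld structure: the strata are indexed by $\cmu'\in -\cLambda^{pos}$ and each looks \'etale-locally like $\Bun_{B^-}^{\cmu-\cmu'}\times X^{\cmu'}$ (with a twist). On such a stratum, iterated adjunction computes the stalks/costalks of $\jmath^-_*$ and $\jmath^-_!$ in terms of $(\CP)^{\otimes c}$-twisted cohomology of the local Zastava-like models $\CZ^{\cmu'_k}_{loc,y_k}$ assembled by factorization. The irrationality of $c$ is used precisely here: on each such model a $\BG_m$-action makes the relevant twisted cohomology monodromic against the Kummer D-module $\Psi(c')$ with $c'\notin\BZ$ (as in the Case 2 argument at the end of \secref{proofs FS}), which forces both the stalk and costalk to vanish in all degrees outside the open stratum, giving cleanness. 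This part should go by a word-for-word adaptation of the IC calculation of \cite{BFGM} on $\BunBmb^\cmu$, with the rational coefficients replaced by the $c$-twisted ones and with all "equal" cohomological bounds replaced by \emph{strict} vanishings thanks to the non-integrality of $c$.
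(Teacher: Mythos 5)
Your overall architecture matches the paper's: introduce the relative compactification $\BunBmb^\cmu$ and the compactified Zastava space $\ol\CZ^\cmu_n\subset \fW_n\underset{\Bun_G}\times\BunBmb^\cmu$; observe that $\ol\pi^\cmu$ is proper (the paper does this more cleanly by noting $\ol\CZ^\cmu_n$ is closed inside the Beilinson--Drinfeld Grassmannian over $X^\cmu_n$); reduce \thmref{cleanness} to the cleanness of ${}'\fp^{-,\cmu,\cdot}(\CF)$ with respect to $'\jmath^-$; reduce further to the cleanness of the constant $c$-twisted D-module on $\Bun_{B^-}^\cmu$ inside $\BunBmb^\cmu$; and prove the latter by a BFGM-style stratification argument exploiting the irrationality of $c$ via $T$-equivariance against a non-integral Kummer character. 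These are exactly \thmref{thm cleanness} and \thmref{other cleanness} in the paper.

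However, there is a genuine gap in the reduction from \thmref{thm cleanness} to \thmref{other cleanness}. You assert that because $\fp^-:\BunBm^\cmu\to\Bun_G$ is smooth and the twisting line bundles match, cleanness of $\Const^c_{\BunBm^\cmu}$ on $\BunBmb^\cmu$ suffices, with the Whittaker condition on $\CF$ only ensuring that ``the base-change argument goes through.'' This does not work as stated: the relevant base change is of $\jmath^-$ along $\fW_n\to\Bun_G$ (equivalently, you are forming $\CF\underset{\Bun_G}\boxtimes \Const^c$ and extending across the boundary of the $\BunBmb^\cmu$-factor), and $\fW_n\to\Bun_G$ is \emph{not} smooth, nor is $\CF$ a pullback from $\Bun_G$. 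In general, cleanness of $\CF''$ on an open substack of $\CY'\to\CY$ does \emph{not} imply cleanness of $\CF'\underset{\CY}\boxtimes\CF''$ after base change along an arbitrary $Z\to\CY$ — one loses the !/*-comparison when restricting along non-smooth maps. The paper supplies the missing ingredient: \propref{ULA}, which (after replacing $\cmu$ by a sufficiently small $\cmu'$ and appealing to factorization to get back to the original $\cmu$) ensures that $\jmath^-_*(\Const^c_{\BunBm^{\cmu'}})$ is ULA with respect to $\fpbm$ over the relevant finite-type open $U\subset\Bun_G$; together with the general lemma at the end of \secref{proof of cleanness} (an external product over a smooth base of an arbitrary object with a \emph{ULA} clean object is again clean), this closes the reduction. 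Without the ULA input, your step from $\ol\CZ^\cmu_n$ down to $\BunBmb^\cmu$ is unjustified; \propref{perversity} and \propref{factor of Whit} control exactness and factorization but do not by themselves propagate cleanness across the fiber product.
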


We emphasize that assertion of \thmref{cleanness} is false without 
the assumption that $c$ be irrational.

\medskip

\noindent{\it Remark.} The morphism $\pi^\cmu$ is affine, so for a (twisted) 
D-module $\CF'$ on $\CZ^\cmu_n$, the object of the derived category given
by $\pi^\cmu_*(\CF')$
lives in non-positive cohomological degrees, and the object $\pi^\cmu_!(\CF')$
lives in non-negative cohomological degrees. Hence, \thmref{cleanness} formally
implies \propref{perversity}. Nonetheless, we will give an independent 
proof of this proposition, because it holds without the assumption that $c$
be irrational.

\ssec{}

Our present goal is to establish a key factorization property of the D-modules on $\CZ^\cmu_n$ that are obtained 
from objects of $\Whit^c_n$ by means of ${}'\fp^{-,\cmu,\cdot}$:

\begin{prop}   \label{factor of Whit}
For $\CF\in \Whit^c_n$ and $\cmu=\cmu_1+\cmu_2$, under the isomorphism
of \eqref{Zast factor}, the D-module
$$\on{add}_{\cmu_1,\cmu_2,disj}^*\left({}'\fp^{-,\cmu,\cdot}(\CF)\right)\in 
\fD\mod^c\left(\left(X^{\cmu_1}_\emptyset \times X^{\cmu_2}_n\right)_{disj}
\underset{X^\cmu_n}\times \CZ^\cmu_n\right)$$
goes over to
$$'\fp^{-,\cmu_1,\cdot}(\CF_{\emptyset})\boxtimes {}'\fp^{-,\cmu_2,\cdot}(\CF)\in
\fD\mod^c\left(\left(X^{\cmu_1}_\emptyset \times X^{\cmu_2}_n\right)_{disj}
\underset{(X^{\cmu_1}_\emptyset \times X^{\cmu_2}_n)}\times
\left(\CZ^{\cmu_1}_\emptyset\times \CZ^{\cmu_2}_n\right)\right).$$
These isomorphisms are compatible with refinements of factorizations.
\end{prop}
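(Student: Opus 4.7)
The plan is to combine the geometric factorization of Zastava spaces provided by \propref{factorization of Zastava} with the Whittaker equivariance defining $\Whit^c_n$ in \secref{def cat}. The statement is local in $X$, and after base change to $\left(X^{\cmu_1}_\emptyset \times X^{\cmu_2}_n\right)_{disj}$ one may work independently on formal neighborhoods of the two divisor supports.

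I would first unpack the geometric map. Under the identification \eqref{Zast factor}, the composition
$$\CZ^{\cmu_1}_\emptyset \times \CZ^{\cmu_2}_n \big|_{disj} \to \CZ^\cmu_n \overset{{}'\fp^-}\to \fW_n$$
admits a concrete description via \propref{nature of Zastava}: away from the union of the two supports the $G$-bundle is canonically trivialized (as $\omegacrho \overset{T}\times G$), so the $G$-bundle of the image is obtained by gluing the two bundles encoded by $z_1$ and $z_2$ along this common trivialization, with the combined Pl\"ucker data being the union of those of the two factors. Crucially, for fixed $z_2$, the dependence on $z_1$ is realized by an $\CN^{\mer}$-type modification at the points of $\on{supp}(D_1)$ of the reference $B$-reduction supplied by $\omegacrho$.

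The key step is to invoke the $(\CN^{\mer}_{\oy},\chi_{\oy})$-equivariance of $\CF$, taking $\oy$ to be an auxiliary collection of points containing $\on{supp}(D_1)$ and disjoint from $\ox$ (allowed to vary over the base). The equivariance on ${}_{\oy}\fW_n$, together with its lift to $\CP_{{}_{\oy}\fW_n}$ provided by the lifting lemma at the end of \secref{good at y}, identifies the pull-back of $\CF$ along the gluing map with an external product: on the $\CZ^{\cmu_2}_n$-factor one obtains ${}'\fp^{-,\cmu_2,\cdot}(\CF)$ directly, while on the $\CZ^{\cmu_1}_\emptyset$-factor the $\CN^{\mer}$-modification class is paired with the character $\chi$, producing the pull-back of the exponential D-module $\exp$ along the residue map. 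This is precisely the recipe defining $\CF_\emptyset$ given after \lemref{descr of irr in Whit}, so the $\CZ^{\cmu_1}_\emptyset$-contribution equals ${}'\fp^{-,\cmu_1,\cdot}(\CF_\emptyset)$, as required.

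The principal obstacle lies in bookkeeping of the $c$-twisting. The isomorphism \eqref{line bundle factorization}, combined with \lemref{line bundle on Zastava}, identifies the ambient twisting bundles on both sides of the claimed isomorphism; but to upgrade the identification of pull-backs to one of twisted D-modules one must verify that the $\CN^{\mer}_{\oy}$-action on $\CP_{{}_{\oy}\fW_n}$ matches the factorization constants appearing in the fiber formula of \lemref{fiber of line bundle}. This follows by direct inspection of the normalizations in \secref{det line} and \secref{intr line bundle}. Once this is settled, compatibility with refinements of the partition $\cmu=\cmu_1+\cmu_2$ is functorial in the partition and reduces by iteration to the same calculation.
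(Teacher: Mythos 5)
Your proposal is correct and follows essentially the same route as the paper: identifying the $\cmu_1$-component of the Zastava space with a piece of (a relative version of) $\CN^{\mer}/\CN^{\reg}$ via \propref{nature of Zastava}, invoking the $(\CN^{\mer},\chi)$-equivariance defining $\Whit^c_n$ to split off an exponential D-module on the first factor, and matching twistings using \lemref{line bundle on Zastava} and \eqref{line bundle factorization}. The paper makes two points you gloss over which are worth keeping in mind: it replaces the fixed collection $\oy$ by the family $\CN^{\mer}_{\cmu_1}$, $\CN^{\reg}_{\cmu_1}$ parameterized over the moving base $X^{\cmu_1}_\emptyset$ (your parenthetical ``allowed to vary over the base'' is doing heavy lifting there), and it records that the pullback of $\CF$ is the extension by both $*$ and $!$ from the ``good'' open substack, which is what licenses checking the isomorphism only on that locus before concluding it holds globally.
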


\begin{proof}

Let us consider the following relative version of the stack $(\fW_n)_{\text{good at }\oy}$,
introduced in \secref{good at y}. Namely, let $(\fW_n)_{\text{good at }\cmu_1}$ be the open substack
of $X^{\cmu_1}_{\emptyset}\times \fW_n$, where a divisor $D\in X^{\cmu_1}_\emptyset$
is forbidden to hit the pole points $(x_1,...,x_n)\in X^n$, and the
$\kappa^\lambda$'s are bundle maps on a neighbourhood of $\on{supp}(D)$. 

Over $X^{\cmu_1}_{\emptyset}$ we have a group-scheme, denoted $\CN^\reg_{\cmu_1}$,
and a group ind-scheme $\CN^{\mer}_{\cmu_1}$; the latter is endowed with a 
character $\chi_{\cmu_1}:\CN^{\mer}_{\cmu_1}\to \BG_a$. 
Over $(\fW_n)_{\text{good at }\cmu_1}$
there is a $\CN^\reg_{\cmu_1}$-torsor, denoted $_{\cmu_1}\fW_n$. 
The total space of this torsor is acted on by
$\CN^{\mer}_{\cmu_1}$. 

Consider the action map
$$\on{act}_{\cmu_1}: \CN^{\mer}_{\cmu_1}\overset{\CN^\reg_{\cmu_1}}\times {}_{\cmu_1}\fW_n\to
(\fW_n)_{\text{good at }\cmu_1}.$$
From the definition of the Whittaker category it follows that for any $\CF\in \Whit_n$, we have:
$$\on{act}_{\cmu_1}^*(\CF)\simeq \chi_{\cmu_1}^*(exp)\boxtimes \CF.$$

\medskip

The pre-image of $(\fW_n)_{\text{good at }\cmu_1}$ under the map 
$$\left(X^{\cmu_1}_\emptyset \times X^{\cmu_2}_n\right)_{disj}
\underset{X^\cmu_n}\times \CZ^\cmu_n\overset{'\fp^-}\to X^{\cmu_1}_{\emptyset}\times \fW_n$$
goes over under the isomorphism \eqref{Zast factor} to the substack
\begin{equation} \label{good part}
\left(X^{\cmu_1}_\emptyset \times X^{\cmu_2}_n\right)_{disj}
\underset{(X^{\cmu_1}_\emptyset \times X^{\cmu_2}_n)}\times
\left(\oZ^{\cmu_1}_\emptyset\times \CZ^{\cmu_1}_n\right),
\end{equation}
where 
$\oZ^{\cmu_1}_\emptyset=\CZ^{\cmu_1}_\emptyset\underset{\fW_\emptyset}\times \fW_{\emptyset,0}$.

Note that by construction, we have a locally closed embedding of schemes over $X^{\cmu_1}_\emptyset$
$$\oZ^{\cmu_1}_\emptyset\to \CN^{\mer}_{\cmu_1}/\CN^\reg_{\cmu_1},$$
such that the pull-back of $\chi_{\cmu_1}^*(exp)$ identifies with the restriction of
$'\fp^{-,\cmu_1,\cdot}(\CF_{\emptyset})$ to this sub-scheme.

\medskip

For $\CF\in \Whit^c_n$, its pull-back onto the product 
$$(X^{\cmu_1}_{\emptyset}\times \fW_n)\underset{X^{\cmu_1}_\emptyset\times X^n}
\times (X^{\cmu_1}_\emptyset\times X^n)_{disj}$$
is the extension by * (and also by $!$) from $(\fW_n)_{\text{good at }\cmu_1}$. Hence,
it it sufficient to establish an isomorphism of twisted D-modules over the open sub-stack
appearing in \eqref{good part}. 

The assertion of the proposition follows now from the fact that 
the composition
$$\left(X^{\cmu_1}_\emptyset \times X^{\cmu_2}_n\right)_{disj}
\underset{(X^{\cmu_1}_\emptyset \times X^{\cmu_2}_n)}\times
\left(\oZ^{\cmu_1}_\emptyset\times \CZ^{\cmu_2}_n\right)\to
\left(X^{\cmu_1}_\emptyset \times X^{\cmu_2}_n\right)_{disj}
\underset{X^\cmu_n}\times \CZ^\cmu_n \to X^{\cmu_1}_\emptyset \times \fW_n$$
factors as
\begin{multline*}
\left(X^{\cmu_1}_\emptyset \times X^{\cmu_2}_n\right)_{disj}
\underset{(X^{\cmu_1}_\emptyset \times X^{\cmu_2}_n)}\times
\left(\oZ^{\cmu_1}_\emptyset\times \CZ^{\cmu_2}_n\right)\to \\
\to \left(X^{\cmu_1}_\emptyset \times X^{\cmu_2}_n\right)_{disj}
\underset{(X^{\cmu_1}_\emptyset \times X^{\cmu_2}_n)}\times
\left(\CN^{\mer}_{\cmu_1}/\CN^\reg_{\cmu_1}\times \CZ^{\cmu_2}_n\right) \simeq \\
\simeq 
\left(X^{\cmu_1}_\emptyset \times X^{\cmu_2}_n\right)_{disj}
\underset{(X^{\cmu_1}_\emptyset \times X^{\cmu_2}_n)}\times
\left(\CN^{\mer}_{\cmu_1}\overset{\CN^\reg_{\cmu_1}}\times 
\left({}_{\cmu_1}\fW_n\underset{\fW_n}\times \CZ^{\cmu_2}_n)\right)\right)
\to \\
\to \CN^{\mer}_{\cmu_1}\overset{\CN^\reg_{\cmu_1}}\times {}_{\cmu_1}\fW_n
\overset{\on{act}_{\cmu_1}}\to {}_{\cmu_1}\fW_n\to 
(\fW_n)_{\text{good at }\cmu_1}\hookrightarrow X^{\cmu_1}_\emptyset \times \fW_n,$$
\end{multline*}
where the second arrow is the isomorphism, following
from the trivialization of the $\CN^{\reg}_{\cmu_1}$-torsor
$$({}_{\cmu_1}\fW_n\underset{\fW_n}\times \CZ^{\cmu_2}_n)
\underset{X^{\cmu_1}_\emptyset \times X^{\cmu_2}_n}\times
\left(X^{\cmu_1}_\emptyset \times X^{\cmu_2}_n\right)_{disj},$$
see \propref{nature of Zastava}.

\end{proof}

\ssec{}

As a corollary of the above proposition, we obtain:

\begin{cor} \label{factor of image of Whit}
For $\CF\in \Whit^c_n$ and $\cmu=\cmu_1+\cmu_2$, we obtain:
$$\on{add}_{\cmu_1,\cmu_2,disj}^*\left(\pi^\cmu_*({}'\fp^{-,\cmu,\cdot}(\CF))\right)
\simeq
\pi^{\cmu_1}_*({}'\fp^{-,\cmu_1,\cdot}(\CF_{\emptyset}))\boxtimes
\pi^{\cmu_2}_*({}'\fp^{-,\cmu_2,\cdot}(\CF))$$
as objects of
$\fD\mod^c\left(X^{\cmu_1}_\emptyset \times X^{\cmu_2}_n\right)_{disj}$.
These isomorphisms are compatible with refinements of partitions.
\end{cor}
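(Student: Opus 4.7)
The plan is to derive the corollary from \propref{factor of Whit} by base change along the étale map $\on{add}_{\cmu_1,\cmu_2,disj}$, combined with the factorization of Zastava spaces from \propref{factorization of Zastava}.

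First, I would form the Cartesian square obtained by pulling $\pi^\cmu$ back along $\on{add}_{\cmu_1,\cmu_2,disj}$. Since $\on{add}_{\cmu_1,\cmu_2,disj}$ is étale (as noted after \eqref{basic line bundle factorization}), étale base change gives that pullback along this map commutes with $\pi^\cmu_*$. Concretely, if $\on{pr}$ denotes the projection from the fiber product $\left(X^{\cmu_1}_\emptyset \times X^{\cmu_2}_n\right)_{disj}\underset{X^\cmu_n}\times \CZ^\cmu_n$ down to $\CZ^\cmu_n$ and $\pi'$ denotes the induced map to $\left(X^{\cmu_1}_\emptyset \times X^{\cmu_2}_n\right)_{disj}$, then
$$\on{add}_{\cmu_1,\cmu_2,disj}^*\bigl(\pi^\cmu_*({}'\fp^{-,\cmu,\cdot}(\CF))\bigr)
\simeq \pi'_*\bigl(\on{pr}^*({}'\fp^{-,\cmu,\cdot}(\CF))\bigr).$$

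Next, I would apply \propref{factorization of Zastava} to identify this fiber product with
$$\left(X^{\cmu_1}_\emptyset \times X^{\cmu_2}_n\right)_{disj}
\underset{(X^{\cmu_1}_\emptyset \times X^{\cmu_2}_n)}\times
\left(\CZ^{\cmu_1}_\emptyset\times \CZ^{\cmu_2}_n\right),$$
under which $\pi'$ becomes the restriction of the product map $\pi^{\cmu_1}\times \pi^{\cmu_2}$ to the disjoint locus. By \propref{factor of Whit}, under this identification the D-module $\on{pr}^*({}'\fp^{-,\cmu,\cdot}(\CF))$ corresponds to the restriction of the external product $'\fp^{-,\cmu_1,\cdot}(\CF_{\emptyset})\boxtimes {}'\fp^{-,\cmu_2,\cdot}(\CF)$ to the disjoint locus. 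The pushforward $\pi'_*$ of such an external product restricted to a disjoint-support open subset is then computed by a Künneth-type argument: since the map $\pi^{\cmu_1}\times \pi^{\cmu_2}$ is a product and restriction to the (open, étale) disjoint-support locus is compatible with $*$-pushforward, we obtain the external product $\pi^{\cmu_1}_*({}'\fp^{-,\cmu_1,\cdot}(\CF_{\emptyset}))\boxtimes \pi^{\cmu_2}_*({}'\fp^{-,\cmu_2,\cdot}(\CF))$ restricted to the disjoint locus, which is the right-hand side of the claim.

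The compatibility with refinements of partitions follows formally from the analogous compatibility asserted in \propref{factorization of Zastava} and \propref{factor of Whit}, by chasing the induced isomorphisms through the composition of base change squares. The only mild point requiring attention is ensuring that the line bundle identifications supplied by \lemref{line bundle on Zastava} and \eqref{basic line bundle factorization} are compatible with the factorization isomorphism \eqref{Zast factor}, so that we are dealing with the same twisting throughout; this is a direct bookkeeping check and I do not anticipate it as a real obstacle, since both twistings descend from the same line bundle $\CP_{X^\cmu_n}$ whose local nature is exactly what \lemref{fiber of line bundle} provides.
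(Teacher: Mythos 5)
Your argument is correct and is exactly what the paper has in mind (the paper states the corollary after Proposition~\ref{factor of Whit} with only the phrase ``as a corollary of the above proposition,'' leaving precisely this base-change-plus-K\"unneth chase implicit). You correctly identify the three ingredients: étale base change along $\on{add}_{\cmu_1,\cmu_2,disj}$ to commute pullback past $\pi^\cmu_*$, the identification of the base-changed Zastava space via Proposition~\ref{factorization of Zastava} together with the D-module factorization of Proposition~\ref{factor of Whit}, and the K\"unneth formula for the external product under $(\pi^{\cmu_1}\times\pi^{\cmu_2})_*$, restricted to the disjoint locus. Your final remark about checking that the line-bundle identifications of Lemma~\ref{line bundle on Zastava} and \eqref{basic line bundle factorization} agree under \eqref{Zast factor} is the one genuine bookkeeping point, and you are right that it is routine given the local description in Lemma~\ref{fiber of line bundle}.
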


The next step is to analyze the object 
\begin{equation} \label{L prime}
'\CL^\cmu_\emptyset:=\pi^\cmu_*({}'\fp^{-,\cmu,\cdot}(\CF_{\emptyset}))
\in \fD\mod^c(X^{\cmu}_\emptyset).
\end{equation}
We shall prove:

\begin{thm} \label{F0} \hfill

\smallskip

\noindent(1) If $(\check\alpha_\imath,\check\alpha_\imath)_c\notin \BZ$
for any $\imath\in \CI$, then we a canonical isomorphism
$'\CL^\cmu_\emptyset\simeq \CL^\cmu_\emptyset$
{\it over} $\oX^\cmu_\emptyset$.

\smallskip

\noindent(2) If $c$ is irrational, the above isomorphism holds over
$X^\cmu_\emptyset$.

\medskip

Both isomorphisms are compatible
with the factorization isomorphisms.

\end{thm}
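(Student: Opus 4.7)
Both $'\CL^\cmu_\emptyset$ and $\CL^\cmu_\emptyset$ are equipped with factorization structures along the étale maps $\on{add}_{\cmu_1,\cmu_2,disj}$: for $'\CL^\cmu_\emptyset$ this is supplied by \corref{factor of image of Whit} specialized to $n=0$, while for $\CL^\cmu_\emptyset$ it is \eqref{basic sheaf factorization}. Now $\oX^\cmu_\emptyset$ is covered, étale-locally, by iterated disjoint unions of one-point configurations $-\check\alpha_\imath\cdot y$. Hence a factorization-compatible isomorphism over $\oX^\cmu_\emptyset$ is determined by, and can be constructed from, isomorphisms at each such one-point configuration; the symmetric-group equivariance is automatic from the symmetry of both sides.

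\textbf{Local computation for part (1).} Fix a simple coroot $\check\alpha_\imath$ and $y\in X$. Using \propref{nature of Zastava} one identifies the fiber $\CZ^{-\check\alpha_\imath}_{loc,y}$ of $\pi^{-\check\alpha_\imath}$ over $-\check\alpha_\imath\cdot y$ with $\BG_m$, sitting inside the $SL_2$-Levi attached to $\imath$. By \lemref{line bundle on Zastava} the twisting trivializes on this fiber, and from the defining description of $\CF_\emptyset$ via the sum-of-residues map one reads off that $({}'\fp^{-,-\check\alpha_\imath,\cdot}(\CF_\emptyset))|_{\CZ^{-\check\alpha_\imath}_{loc,y}}$ is, up to the chosen trivialization, the product of the exponential D-module $\on{exp}$ pulled back via the residue map $\BG_m\to \BG_a$, and the Kummer D-module $\Psi(c')$ with $c'=(\check\alpha_\imath,\check\alpha_\imath)_c$. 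The direct image at the point $-\check\alpha_\imath\cdot y$ is then $H^*(\BG_m,\on{exp}\otimes\Psi(c'))$, which for $c'\notin\BZ$ is one-dimensional and concentrated in a single degree (the classical $\Gamma$-type integral). This matches the fiber of $\oL^{-\check\alpha_\imath}_\emptyset$ at a one-coroot configuration and establishes (1).

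\textbf{Part (2): extension across the diagonal.} Over $\oX^\cmu_\emptyset$ the isomorphism from (1) identifies $'\CL^\cmu_\emptyset$ with $\oL^\cmu_\emptyset$. To upgrade this to an iso with $\CL^\cmu_\emptyset=j^{Diag}_{!*}\oL^\cmu_\emptyset$ over all of $X^\cmu_\emptyset$, it is enough to show that $'\CL^\cmu_\emptyset$ is itself the intermediate extension of its restriction to $\oX^\cmu_\emptyset$. I would argue in two steps. First, \thmref{cleanness} supplies $\pi^\cmu_!\simeq \pi^\cmu_*$ on the pull-back $({}'\fp^{-,\cmu,\cdot})(\CF_\emptyset)$; combined with the fact that $\CF_\emptyset$ is itself clean (being the $!$- equal $*$-extension from the open stratum $\fW_{\emptyset,0}$), this makes $'\CL^\cmu_\emptyset$ essentially Verdier self-dual. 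Second, by factorization and this self-duality, it suffices to verify that for every $\cmu$ which is not a single simple negative coroot, the stalk cohomology $H^*(\CZ^\cmu_{loc,y},(\CF_\emptyset)^\cmu_{loc})$ of \eqref{c est} at the most-diagonal point $\cmu\cdot y$ vanishes not only in the minimal degree (the content of \propref{perversity}) but also in the sub-minimal degree; this produces exactly the stalk and costalk vanishing that characterize the intermediate extension off $\oX^\cmu_\emptyset$.

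\textbf{Main obstacle.} The principal difficulty is this sub-minimal vanishing. \propref{perversity} only delivers the minimal-degree vanishing, reduced to a cohomology computation extracted from \cite{FGV} Prop.~7.1.7 and \cite{BFGM} Prop.~6.4; pushing the vanishing one degree further does not follow formally. I expect to establish it by a direct stratification analysis of $\CZ^\cmu_{loc}$, controlling how $\on{exp}\otimes\Psi(c')$-type local systems on each stratum contribute in each cohomological degree. Irrationality of $c$ enters essentially here: if $c$ were rational, Kummer monodromies on some stratum could become trivial and produce an extra bottom-degree class precisely in the forbidden degree. No sub-minimal vanishing is asked for $\cmu=-\check\alpha_\imath$, because in that case $X^\cmu_\emptyset=\oX^\cmu_\emptyset=X$ and there is no diagonal stratum to analyze.
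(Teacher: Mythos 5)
Your overall strategy is the one the paper itself uses. Part~(1): reduce by factorization to the case $\cmu=-\check\alpha_\imath$, trivialize the twisting on the local Zastava fiber, identify the discrepancy with a Kummer D-module $\Psi(c')$, and invoke the Gauss-sum isomorphism $H^*(\BG_m,\exp\otimes\Psi(c'))\simeq\BC$. (One small imprecision: $\CZ^{-\check\alpha_\imath}_{loc,y}$ is $\BG_a$, not $\BG_m$; it is the open stratum $\CZ^{0,-\check\alpha_\imath}_{loc,y}$ that is $\BG_m$. The computation is the same because the relevant D-module is extended by zero, but the statement as written is not quite accurate.) Part~(2): show $'\CL^\cmu_\emptyset$ is the intermediate extension of $\oL^\cmu_\emptyset$, using \thmref{cleanness} (plus cleanness of $\CF_\emptyset$ on $\fW_\emptyset$) for Verdier self-duality, and a sub-minimal vanishing statement on $!$-stalks along the diagonal strata. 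This is precisely the paper's argument, including the observation that $\cmu=-\check\alpha_\imath$ needs no sub-minimal analysis.

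The genuine gap is exactly where you flag it: you identify the sub-minimal vanishing
$$H^{-|\cmu|+1}\Bigl(S^0\cap S^{-,\cmu},\ \gamma^{0,\cmu}{}^*(\Psi(c))\otimes\chi_x^0{}^*(\exp)\Bigr)=0 \quad (|\cmu|>1)$$
as the crux, but leave it at ``I expect to establish it by a direct stratification analysis.'' That is not yet a proof, and the needed argument is not formal. What the paper does: by the projection formula, rewrite this cohomology as a Fourier-Deligne transform of the complex $\sM^Y=(\chi^0_{x,univ}|_Y)_!\bigl(\gamma^{0,\cmu}{}^*(\Psi(c))\bigr)$ on $\BA^r$, observe (\lemref{torus action}) that $\sM^Y$ is $T$-equivariant against $\Psi(c\cdot\mu)$ with $\mu=(\cmu,\cdot)_{Kil}$, then split into two cases according to the rank $r'$ of the minimal sub-diagram supporting $\cmu$. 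When $r'=1$ (so $\cmu=-m\check\alpha_\imath$, $m\geq 2$), $S^0\cap S^{-,\cmu}\simeq\BA^{m-1}\times\BG_m$ with $\gamma^{0,\cmu}$ factoring through the $\BG_m$-factor and $\chi^0_{x,univ}$ through the $\BA^{m-1}$-factor, so the cohomology vanishes entirely. When $r'\geq 2$, irrationality forces $\sM^Y$ to be the extension by zero from the complement of the coordinate hyperplanes, and then a fiber-dimension estimate $\leq\langle\rho,-\cmu\rangle-r'$ gives the required perverse degree bound. Your heuristic---that for rational $c$ some Kummer monodromy could trivialize and produce an unwanted bottom-degree class---is the right intuition, but turning it into a proof requires precisely this torus-equivariance and Fourier-transform mechanism, which your proposal does not supply.
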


\section{Proofs--A}   \label{proofs A}

\ssec{Proof of \thmref{F0}(1)}

Let us first assume that $\cmu$ equals the negative of a simple co-root 
$\check\alpha_\imath$. The scheme
$\CZ_\emptyset^{\check\alpha_\imath}$ identifies canonically with $X\times \BG_a$,
and $\oZ_\emptyset^{\check\alpha_\imath}$ is the complement to the zero section,
corresponding to $0\in \BG_a$.

Recall that the line bundle $\CP_{X^{\check\alpha_\imath}_\emptyset}$ is canonically
trivial. However, over $\oZ_\emptyset^{\check\alpha_\imath}$ we have two trivializations
of the the corresponding line bundle: one inherited from that on 
$X^{\check\alpha_\imath}_\emptyset$, and the other from that on $\fW_{\emptyset, 0}$.
The discrepancy is given by the map
$$\oZ_\emptyset^{\check\alpha_\imath}\simeq X\times \BG_m\twoheadrightarrow \BG_m\overset{x\mapsto x^{2\cdot d_\imath}}
\longrightarrow \BG_m,$$
where $d_\imath$ is as in \secref{old FS}.

\medskip

Thus, $\pi^\cmu_*({}'\fp^{-,\cmu,\cdot}(\CF_{\emptyset}))$ 
is equal to the constant D-module
on $X$ times the vector space
$$H\left(\BG_m,\Psi(2\cdot d_\imath\cdot c)\otimes \exp\right),$$
where $\Psi(\cdot)$ is the Kummer D-module as in \eqref{Kummer}.

\medskip

Now, the "Gauss sum" formula, i.e., the canonical isomorphism
$$H(\BG_m,\Psi(2\cdot d_\imath\cdot c)\otimes \exp)\simeq \BC$$
is well-known from the theory of Fourier-Deligne transform.

\medskip

Let us now assume that $\cmu$ is arbitrary. \corref{factor of image of Whit}
together with the above computation, imply that the required isomorphism
holds after the pull-back to $\left(\underset{\imath}\Pi\, X^{m_\imath}\right)$,
away from the diagonal divisor, where $\cmu=-\underset{\imath}\Sigma\, 
m_\imath\cdot \alpha_\imath$.

We only have to show that the action of the symmetric group 
$\underset{\imath}\Pi\, \Sigma_{m_\imath}$ on the above constant sheaf
is given by {\it sign} character. But this follows from the K\"unneth formula,
as the cohomology $H(\BG_m,\Psi(2\cdot d_\imath\cdot c)\otimes \exp)$ is 
concentrated in degree $0$ and $\dim(\BG_m)=1$.

\qed

\ssec{Proof of \propref{perversity}}

Let us first show that for $\CF\in \Whit_n$, the object 
$$\pi^\cmu_*\circ {}'\fp^{-,\cmu,\cdot}(\CF)$$ is concentrated
in non-positive degrees. 

For that we can assume that 
$\cmu$ satisfies $\langle \alpha,\cmu\rangle <-(2g-2)$ for
all $\alpha\in \Delta^+$. Indeed, if
not, we can replace $\cmu$ by $\cmu_1=\cmu-k\cdot \crho$
with $k$ large enough, and then apply \corref{factor of image of Whit}
combined with \thmref{F0}(1) to factor the extra points away.

\medskip

For $\cmu$ as above the map $\fp^-:\BunBm^{\cmu}\to \Bun_G$
is smooth, and hence, so is the map $'\fp^-:\CZ^\cmu_n\to \fW_n$.
Hence, ${}'\fp^{-,\cmu,\cdot}(\CF)$ lives in cohomological degree $0$.
Hence, our assertion follows from the fact that the morphism
$\pi^\cmu$ is affine (see \cite{BFGM}, Sect. 5.1 for the proof of the latter fact).

\medskip

To prove that $\pi^\cmu_*\circ {}'\fp^{-,\cmu,\cdot}(\CF)$ is concentrated
in non-negative degrees we need to analyze the fibers of the map
$\pi^\cmu$.

\ssec{Analysis of the fibers}  \label{fibers}

Let us denote by $\CZ^\cmu_{loc,x}$ the fiber of $\CZ^\cmu_{1}$ over the point
$\cmu\cdot x\in X^{\cmu}_1$. For $\cla\in \cLambda$, let us denote by
$\CZ^{\cla,\cmu}_{loc,x}$ the pre-image in $\CZ^\cmu_{loc,x}$ of the sub-stack
$\fW_{x,\cla}$.  Let $\CP_{\CZ^\cmu_{loc,x}}$ and $\CP_{\CZ^{\cla,\cmu}_{loc,x}}$
denote the corresponding line bundles, obtained by restriction from $\CP_{\CZ^\cmu_1}$.


As is shown in \cite{BFGM}, Sect. 2.6, $\CZ^\cmu_{loc,x}$ identifies with a closed
sub-indscheme of the affine Grassmannian $\Gr_{G,x}=G(\CK_x)/G(\CO_x)$.

Let $S^\cla$ denote the $N(\CK_x)$-orbit of the point 
$t^\cla$, \footnote{We denote by $t$
a local parameter on the formal disc $\D_x$ around $x$;
$t^\cla\in \Gr_G$ is the projection of the point
in $G(\CK_x)$ corresponding to the map $\D^\times_x\overset{t}\to \BG_m(\CK_x)
\overset{\cla}\to T(\CK_x)\hookrightarrow G(\CK_x)$.}
and let
$S^{-,\cmu}$ denote the $N^-(\CK_x)$-orbit of the point $t^{\cmu}$. Then
$$\CZ^\cmu_{loc,x}\simeq S^{-,\cmu}\overset{T(\CO_x)}\times \omegacrho|_{\D_x} \text{ and }
\CZ^{\cla,\cmu}_{loc,x}\simeq \left(S^\cla\cap S^{-,\cmu}\right)
\overset{T(\CO_x)}\times \omegacrho|_{\D_x},$$
where by a slight abuse of notation we denote by $\omegacrho|_{\D_x}$ the corresponding
$T(\CO_x)$-torsor.

The line bundles $\CP_{\CZ^\cmu_{loc,x}}$ and $\CP_{\CZ^{\cla,\cmu}_{loc,x}}$ 
are induced from the canonical line bundle on $\Gr_{G,x}$ via  the above 
embeddings.

\medskip

By \propref{factorization of Zastava}, for a point $D\in X^\cmu_n$ given by 
$\Sigma\, \cmu'_k\cdot y_k+\Sigma\, \cmu''_j\cdot x_j$ 
with all the $y_k$'s and $x_j$'s pairwise distinct, its preimage in $\CZ^\cmu_n$
is isomorphic to the product
$$\underset{k}\Pi\, \CZ^{\cmu'_k}_{loc,y_k}\times 
\underset{j}\Pi\, \CZ^{\cmu''_j}_{loc,x_j}.$$

\medskip

By \propref{factor of Whit}, an object $\CF\in \Whit^c_n$ defines a (complex of)
twisted D-modules $\CF^\cmu_{loc,x}$ for every $\cmu$ and $x\in X$, so that
the !-restriction of ${}'\fp^{-,\cmu,\cdot}(\CF)$ to the fiber over the point
$D\in X^\cmu_n$ is isomorphic to the product
$$\left(\underset{k}\boxtimes\, (\CF_{\emptyset})^{\cmu'_k}_{loc,y_k}\right)\boxtimes
\left(\underset{j}\boxtimes\, \CF^{\cmu''_j}_{loc,x_j}\right).$$

\ssec{Analysis of the line bundle}    \label{an lin bdle}

By \lemref{line bundle on Zastava}, the line bundle 
$\CP_{\CZ^\cmu_{loc,x}}$ over $\CZ^\cmu_{loc,x}$ is canonically
{\it constant} with fiber 
\begin{equation} \label{mu line}
\omega_x^{-(\cmu,\cmu+2\crho)_{\frac{Kil}{2}}}.
\end{equation}

Note that the line bundle $\CP_{\fW_{x,\cla}}$ over $\fW_{x,\cla}$ is 
also constant with fiber 
\begin{equation} \label{la line}
\omega_x^{-(\cla,\cla+2\crho)_{\frac{Kil}{2}}}.
\end{equation}
Hence, the line bundle $\CP_{\CZ^{\cla,\cmu}_{loc,x}}$ is also
isomorphic to the constant line bundle with the above fiber.

\medskip

We obtain that $\CP_{\CZ^{\cla,\cmu}_{loc,x}}$ admits two
trivializations, defined up to a scalar. The discrepancy between them
is a function $\CZ^{\cla,\cmu}_{loc,x}\to \BG_m$, 
defined up to a multiplication by a scalar, that we shall denote by $\gamma^{\cla,\cmu}$. The following assertion will be used in
the sequel:

\begin{lem} \label{torus action}
The function $\gamma^{\cla,\cmu}$ intertwines the 
natural $T(\CO_x)$ action on $S^\cla\cap S^{-,\cmu}\subset 
\Gr_{G,x}$ and the action on $\BG_m$ given by the character
$$T(\CO_x)\to T \overset{(\cla-\cmu,\cdot)_{Kil}}\longrightarrow \BG_m.$$
\end{lem}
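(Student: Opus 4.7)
The plan is to compute, for each of the two canonical trivializations of $\CP_{\CZ^{\cla,\cmu}_{loc,x}}$, the $T(\CO_x)$-character under which it transforms, and to take their ratio. First one checks that the $T(\CO_x)$-action on $S^\cla\cap S^{-,\cmu}\subset\Gr_{G,x}$ (by left multiplication via $T(\CO_x)\subset G(\CK_x)$) makes sense and preserves the intersection: $T(\CK_x)$ normalizes both $N(\CK_x)$ and $N^-(\CK_x)$ and fixes the points $t^\cla$ and $t^\cmu$. This action is compatible with the twist by the $T(\CO_x)$-torsor $\omegacrho|_{\D_x}$, and factors through the evaluation $T(\CO_x)\to T$.

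The key input is the following standard fact about the Kac-Moody central extension: for the level-$\kappa$ equivariant line bundle on $\Gr_G$, the $T$-character on the fiber at the $T$-fixed point $t^\nu$ is $(\nu,\cdot)_\kappa$. Since $\CP_{\Bun_G}$ is defined in \secref{det line} as the adjoint determinant $\det R\Gamma(X,\fg_{\fF_G})$ (up to lines independent of $\fF_G$), the corresponding pairing is the Killing form $\kappa_{Kil}$. Consequently, the trivialization of $\CP_{\CZ^{\cla,\cmu}_{loc,x}}$ pulled back from $\CZ^\cmu_{loc,x}$, whose reference point sits over $t^\cmu\in\Gr_{G,x}$, transforms by the character $\chi_\cmu=(\cmu,\cdot)_{Kil}$; the trivialization pulled back from $\fW_{x,\cla}$, whose reference point corresponds via \propref{nature of Zastava} to the $B$-reduction associated with $t^\cla$, transforms by $\chi_\cla=(\cla,\cdot)_{Kil}$. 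The ratio function $\gamma^{\cla,\cmu}$ therefore intertwines with $\chi_\cla-\chi_\cmu=(\cla-\cmu,\cdot)_{Kil}$, as required.

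The main technical point is verifying the character formulas in our precise setup. One must carefully track how the $\omegacrho|_{\D_x}$-twist interacts with the Kac-Moody equivariant structure on $\CP$: the $2\crho$-shift visible in the $\omega_x$-fiber formulas of \secref{an lin bdle} (the discrepancy between $(\cmu,\cmu+2\crho)_{Kil/2}$ and the "bare" quadratic term $\frac{1}{2}(\cmu,\cmu)_{Kil}$) arises from this twist and governs the $\omega_x$-grading of the fiber, but must be shown to contribute identically to the $T(\CO_x)$-equivariant structure of both trivializations, so that only the linear piece $(\cla-\cmu,\cdot)_{Kil}$ survives in the ratio. Together with the standard identification of the $T$-character of the level-$\kappa_{Kil}$ line bundle at $t^\nu$, this yields the claim.
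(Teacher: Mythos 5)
The paper states this lemma without a proof, so there is no textual comparison to make; what follows is an assessment of your argument on its own terms.

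Your argument is essentially correct, and it is the natural one. The key facts you invoke are: (i) for a level-$\kappa$ $G(\CO_x)$-equivariant line bundle on $\Gr_{G,x}$, the action of $T(\CO_x)$ on the fiber at the $T$-fixed point $t^\nu$ factors through $T(\CO_x)\to T$ and is given by $\kappa(\nu,\cdot)$; (ii) the determinant line bundle built from $\det R\Gamma(X,\fg_{\fF_G})$ has level equal to the Killing form. Then the $\cmu$-side trivialization is $N^-(\CK_x)$-equivariant with base point $t^\cmu$, hence transforms under $T(\CO_x)$ by $(\cmu,\cdot)_{Kil}$, while the $\cla$-side trivialization is $N(\CK_x)$-equivariant with base point $t^\cla$, hence transforms by $(\cla,\cdot)_{Kil}$; the ratio $\gamma^{\cla,\cmu}$ therefore transforms by $(\cla-\cmu,\cdot)_{Kil}$. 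This is precisely the assertion.

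One small correction to your third paragraph: the $2\crho$-shift in the $\omega_x$-fiber formulas of \secref{an lin bdle} is a red herring for the present computation, and there is nothing for it to ``cancel.'' That shift records the $\omega_x$-grading of the fiber (equivalently, how the fiber varies as $x$ moves, coming from the $\omegacrho$-twist in the identification $\CZ^{\cla,\cmu}_{loc,x}\simeq (S^\cla\cap S^{-,\cmu})\overset{T(\CO_x)}\times \omegacrho|_{\D_x}$), which is a grading orthogonal to the $T(\CO_x)$-equivariant structure. The $T(\CO_x)$-character on the fiber at $t^\nu$ is computed from the commutator $[t,t^\nu]$ in the Kac--Moody central extension and is linear in $\nu$ from the outset, with no $\crho$-term. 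So your conclusion is right, but the suggested ``identical contribution that cancels in the ratio'' never enters; you can delete that caveat.
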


\ssec{}   \label{est from above}

Let us now return to the proof of \propref{perversity}. Consider the stratification
of $X^\cmu_n$ by means of the strata formed by divisors $\Sigma\, \cmu'_k\cdot y_k+
\Sigma\, \cmu''_j\cdot x_j$ with all the $y_k$'s and $x_j$'s pairwise distinct.

By \secref{fibers}, to prove the desired cohomological estimate, we have to show that for 
$\CF\in \Whit^c_{x}$ the following holds:
\begin{equation} \label{coh est}
\begin{cases}
&H^i\left(\CZ^\cmu_{loc,x}, \CF^{\cmu}_{loc,x}\right)=0 \text{ for } i<0 \text{ and any } \CF \\
&H^i\left(\CZ^\cmu_{loc,x}, (\CF_{\emptyset})^{\cmu}_{loc,x}\right)=0 \text{ for } 
i\leq 0
\text{ and } \CF=\CF_{\emptyset}
\end{cases}
\end{equation}
where by a slight abuse of notation, we view $\CF^{\cmu}_{loc,x}$ and 
$(\CF_{\emptyset})^{\cmu}_{loc,x}$ as non-twisted D-modules using
any trivialization of the line \eqref{mu line}.

\bigskip

To prove the first assertion in \eqref{coh est}, we can assume that $\CF$ is of the form
$\CF_{x,\cla,*}$ for some $\cla\in \cLambda^+$. In this case, the
$\CF^{\cmu}_{loc,x}\in D(\fD\mod^c(\CZ^\cmu_{loc,x}))$ is the *-extension of a 
complex of twisted D-modules on $\CZ^{\cla,\cmu}_{loc,x}$. 

Let $\oZ^\cmu_{x,\cla}$ denote the the pre-image of the locally closed substack
$\fW_{x,\cla}\subset \fW_x$. This is a smooth scheme, and the map
$$\pi^\cmu:\oZ^\cmu_{x,\cla}\to X^\cmu_{x,\leq \cla}$$
is flat. The complex $\CF^{\cmu}_{loc,x}$ is obtained from a lisse twisted
D-module on $\oZ^\cmu_{x,\cla}$ by !-restriction to the fiber over the point
$\cla\cdot x\in X^\cmu_{x,\leq \cla}$. Hence, it is concentrated in the cohomological
degrees $\geq \dim(X^\cmu_{x,\leq \cla})=\langle \rho,\cla-\cmu\rangle$.
However, since $\dim(\CZ^{\cla,\cmu}_{loc,x})=\dim(S^\cla\cap S^{-,\cmu})=
\langle \rho,\cla-\cmu\rangle$, our assertion follows.

\bigskip

To prove the assertion concerning $\CF_{\emptyset}$, we have to show, that the restriction
of $(\CF_{\emptyset})^{\cmu}_{loc,x}$ to the smooth part of $\CZ^{0,\cmu}_{loc,x}$,
which is a lisse D-module placed in the cohomological degree 
$\langle \rho,-\cmu\rangle$, is non-constant on each connected component,
where we are using the trivialization of the twisting obtained from 
trivializing the line \eqref{mu line}.

Let us describe this lisse D-module explicitly. It is the tensor product of
$(\gamma^{0,\cmu})^*(\Psi(c))$ and $\chi_x^0{}^*(exp)$,
where $\chi_x^0$ is the map
$$\CZ^{0,\cmu}_{loc,x}\simeq S^0\cap S^{-,\cmu}\to N(\CK_x)/N(\CO_x)\overset{\chi_x}\to
\BG_a.$$

\medskip

However, by \cite{FGV}, Prop. 7.1.7, coupled with \cite{BFGM}, Prop. 6.4,
it is known that the map $\chi^0_x$ is {\it non-constant}
on every irreducible component of $S^0\cap S^{-,\cmu}$. This implies that the
above tensor product is non-constant on every component, since the first factor
is tame, and the second is not.

\section{Proofs--B}              \label{sect F0}

\ssec{}

From now till the end of the paper we will assume that $c$ is irrational.
The goal of this section is to prove \thmref{F0}(2), as well as the following statement:

\begin{thm}  \label{other sheaves} 
For $c$ irrational there exists an isomorphism
$$\pi^\cmu_*\circ {}'\fp^{-,\cmu,\cdot}(\CF_{\ox,\ol\cla,!*})\simeq 
\CL^\cmu_{\ox,\ol\cla,!*}.$$
\end{thm}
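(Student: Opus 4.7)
Write $\sG^\cmu := \pi^\cmu_*\circ {}'\fp^{-,\cmu,\cdot}$ and set $\CM := \sG^\cmu(\CF_{\ox,\ol\cla,!*})$. The plan is to verify that $\CM$ satisfies the five defining properties of $\CL^\cmu_{\ox,\ol\cla,!*}$: (a) $\CM$ is perverse; (b) $\CM$ is Verdier self-dual; (c) $\CM$ is supported on $X^\cmu_{\ox,\leq\ol\cla}$; (d) its restriction to the open stratum $\oX^\cmu_{\ox,\leq\ol\cla}$ is the sign local system $\oL^\cmu_{\ox,\ol\cla}$; and (e) its $!$-stalks at every point of the strict boundary $X^\cmu_{\ox,\leq\ol\cla}\setminus\oX^\cmu_{\ox,\leq\ol\cla}$ lie in strictly negative cohomological degrees. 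Combined with (b), condition (e) forces $\CM$ to coincide with the Goresky--MacPherson extension of $\oL^\cmu_{\ox,\ol\cla}$, which is $\CL^\cmu_{\ox,\ol\cla,!*}$ by definition.

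Properties (a), (b), (c) are essentially formal. Perversity (a) is \propref{perversity}. Self-duality (b) follows from \thmref{cleanness}, which gives $\pi^\cmu_!\simeq\pi^\cmu_*$ on the image of ${}'\fp^{-,\cmu,\cdot}$ from $\Whit^c_\ox$, combined with the Verdier self-duality of $\CF_{\ox,\ol\cla,!*}$ inside $\Whit^c_\ox$. For support (c): the closure $\overline{\fW_{\ox,\ol\cla}}$ parametrizes Drinfeld data $(\fF_G,\kappa^\lambda)$ whose $\kappa^\lambda$ has pole at $x_i$ of order at most $\langle\lambda,\cla_i\rangle$, so for any $B^-$-reduction the resulting divisor $D$ satisfies $D-\sum_i\cla_i\cdot x_i\in-\cLambda^{pos}$, i.e.\ lies in $X^\cmu_{\ox,\leq\ol\cla}$.

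Property (d) is a local factorization calculation. A point of $\oX^\cmu_{\ox,\leq\ol\cla}$ has the form $\sum_k\cmu_k\cdot y_k+\sum_i\cla_i\cdot x_i$ with all points distinct, $y_k\neq x_i$, and each $\cmu_k=-\check\alpha_{\imath_k}$. Combining \propref{factor of Whit} and \propref{factorization of Zastava}, \'etale-locally near such a configuration ${}'\fp^{-,\cmu,\cdot}(\CF_{\ox,\ol\cla,!*})$ is an external product: each $y_k$-factor is ${}'\fp^{-,\cmu_k,\cdot}(\CF_\emptyset)$, identified with $\oL^{\cmu_k}_\emptyset$ by \thmref{F0}(2); each $x_i$-factor is the pullback of $\CF_{x_i,\cla_i}$ to $\CZ^{\cla_i}_{loc,x_i}$, whose only contributing stratum is $\CZ^{\cla_i,\cla_i}_{loc,x_i}\simeq S^{\cla_i}\cap S^{-,\cla_i}$. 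By the Mirkovic--Vilonen dimension count recalled in \secref{fibers}, this stratum is zero-dimensional, set-theoretically the single point $\{t^{\cla_i}\}$. Comparing the two natural trivializations of the relevant line bundle via \secref{an lin bdle} and \lemref{torus action} shifts the one-dimensional stalk of $\CF_{x_i,\cla_i}$ there by the power of $\omega^{1/2}_{x_i}$ prescribed by \lemref{fiber of line bundle}, reproducing the fiber of $\oL^\cmu_{\ox,\ol\cla}$ at the chosen point.

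The main obstacle is (e). For a strict-boundary point $D_0=\sum_k\cmu'_k\cdot y_k+\sum_i\cla'_i\cdot x_i$ (some $\cla'_i<\cla_i$, or some $\cmu'_k$ not a simple coroot), the fiber analysis of \secref{fibers} identifies the $!$-stalk of $\CM$ at $D_0$ with the cohomology of $\prod_k\CZ^{\cmu'_k}_{loc,y_k}\times\prod_i\CZ^{\cla'_i}_{loc,x_i}$ in the pullback of $\CF_{\ox,\ol\cla,!*}$. By \propref{perversity} this cohomology is concentrated in non-positive degrees, and by K\"unneth it suffices to show a single tensor factor vanishes strictly in degree $0$. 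For a $y_k$-factor with $\cmu'_k$ not a simple coroot, this is exactly the vanishing established for $\CF_\emptyset$ at the end of \secref{est from above}, using the non-triviality of $\chi_x^0{}^*(exp)\otimes(\gamma^{0,\cmu})^*\Psi(c)$ on every irreducible component of $S^0\cap S^{-,\cmu}$ (from \cite{FGV}, Prop.~7.1.7 and \cite{BFGM}, Prop.~6.4). For $x_i$-factors with $\cla'_i<\cla_i$, a parallel direct analysis — applying the same inputs to the basic Whittaker object $\CF_{x_i,\cla_i}$ in place of $\CF_\emptyset$, and using irrationality of $c$ to rule out the degree-$0$ contribution exactly as in \thmref{F0}(2)(ii) — yields the required sub-minimal degree vanishing. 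This last step is the technical crux of the proof and accounts for the assumption that $c$ be irrational.
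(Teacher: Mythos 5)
Your proposal is correct and follows essentially the same strategy as the paper: characterize $\CL^\cmu_{\ox,\ol\cla,!*}$ by the Goresky--MacPherson conditions, identify the restriction to the open stratum via \propref{factor of Whit} and \thmref{F0}, derive essential self-duality from \thmref{cleanness}, and close the argument by the boundary costalk estimate, which rests on the same two-case dichotomy (whether $\chi_x^{\cla}$ is constant on a component, with irrationality of $c$ handling the constant case). The only organizational difference is that the paper factors away the $y_k$-contributions by an induction on $|\cla-\cmu|$, reducing everything to the single point $\cmu\cdot x$, whereas you check all boundary strata at once via K\"unneth; the two are equivalent, though for the non-simple $y_k$-factors the cleaner citation is to \thmref{F0}(2) itself (which provides the IC costalk bound) rather than only the perversity estimate at the end of \secref{est from above}.
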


The proofs of the two theorems are largely parallel.
We begin with the former.

\ssec{} 

By \propref{perversity}, the LHS 
(i.e., $'\CL^\cmu_\emptyset:=\pi^\cmu_*\circ {}'\fp^{-,\cmu,\cdot}(\CF_{\emptyset})$)
is a D-module, which coincides with the RHS (i.e., $\CL^\cmu_\emptyset$) over the open
sub-scheme $\oX^\cmu_\emptyset$. 

As a first step, we are going to show
that the $!$-restriction of the LHS to any stratum in
$X^\cmu_\emptyset-\oX^\cmu_\emptyset$ is concentrated in cohomological degrees
$\geq 1$. Applying factorization, this is equivalent to the fact that
\begin{equation} \label{coh on fiber}
H^i\Bigl((S^0\cap S^{-,\cmu}),\gamma^{0,\cmu}{}^*(\Psi(c))\otimes
\chi_x^0{}^*(exp)\Bigr),
\end{equation}
vanishes for $i=-|\cmu|+1$, whenever $|\cmu|>1$, where $|\cmu|$ denotes the
length of $\cmu$, i.e., $|\langle \rho,\cmu\rangle|$. 

\medskip

Since the dimension of every irreducible component of $S^0\cap S^{-,\cmu}$
is $|\cmu|$, it is enough to show that for every such component 
(or its dense open subset) $Y$,
\begin{equation}  \label{h y}
H^i\Bigl(Y,\gamma^{0,\cmu}{}^*(\Psi(c))\otimes
\chi_x^0{}^*(exp)\Bigr)
\end{equation}
vanishes for $i=-|\cmu|+1$. We shall now rewrite
the expression for the above cohomology. 

\ssec{}  \label{chi univ}

Consider the vector space $\fn/[\fn,\fn]\simeq \BA^r$, and let
$\chi_{x,univ}$ be the canonical map $N(\CK_x)\to \BA^r$, where $r$
is the semi-simple rank of $G$. Our character
$\chi_{x}$ can be taken to be the composition of $\chi_{x,univ}$ and {\it any} functional
$\ell:\BA^r\to \BA$, which is non-zero on all simple roots.

By the projection formula, 
$$
H^i\Bigl(Y,\gamma^{0,\cmu}{}^*(\Psi(c))\otimes
\chi_x^0{}^*(exp)\Bigr)\simeq
H^i\biggl(\BA^r,(\chi^0_{x,univ}|_Y)_!\left(\gamma^{0,\cmu}{}^*(\Psi(c))
\right)\otimes \ell^*(exp)\biggr).
$$

The scheme $S^0\cap S^{-,\cmu}$, and hence $Y$, is acted on by $T(\CO_x)$, 
and the map
$\chi^0_{x,univ}$ is equivariant, where $T(\CO_x)$ acts on $\BA^r$ via
the projection $T(\CO_x)\to T$ and the natural action of the latter on $\fn/[\fn,\fn]$.

By \lemref{torus action}, the map $\gamma^{0,\cmu}:S^0\cap S^{-,\cmu}\to \BG_m$ is 
$T(\CO_x)$-equivariant against the character $\mu:=(\cmu,?)_{Kil}$.
Hence, the complex
\begin{equation} \label{M mu}
\sM^Y:=(\chi^0_{x,univ}|_Y)_!\left(\gamma^{0,\cmu}{}^*(\Psi(c))\right)
\end{equation}
on $\BA^r$ is $T$-equivariant against the character sheaf
$\Psi(c\cdot \mu)$. \footnote{The latter is the pull-back of $\Psi(c)$
by means of the map $\mu:T\to \BG_m$.} In particular, $\sM^Y$ is lisse away
from the diagonal hyperplanes.

\medskip

Thus, the cohomology \eqref{h y}, {\it shifted by} $[r]$,
is the fiber at $\ell \in (\BA^r)^*$ of the Fourier-Deligne transform $\on{Four}(\sM^Y)$.
By the above equivariance property of $\sM^Y$, the complex $\on{Four}(\sM^Y)$
is also twisted $T$-equivariant, and hence is lisse away from the coordinate hyperplanes,
and in particular on a neighbourhood of $\ell$.



\ssec{}
We are ready now to return to the proof that the cohomology \eqref{h y}
vanishes in degree $1-|\cmu|$. 

\medskip

Let $\CI'\subset \CI$ be the minimal Dynkin sub-diagram, such that
$\cmu\in \on{Span}(\check\alpha_{\imath'},\, \imath'\in \CI')$,
and let $r'$ be its rank. We will distinguish two cases: (1) $r'=1$ 
and (2) $r' \geq1$.

\medskip

In case (1) $\cmu=(-m)\cdot \check\alpha_\imath$, where $\alpha_\imath$ is
the corresponding simple root. We can describe the intersection 
$S^0\cap S^{-,\cmu}$ explicitly. It is irreducible and isomorphic to
$\BA^{m-1}\times \BG_m$, with the map $\chi^0_{x,univ}$ being
$$\BA^{m-1}\times \BG_m\twoheadrightarrow \BA^{m-1}\to \BA^1\overset{\alpha_\imath}
\hookrightarrow \BA^r.$$
(Recall that by assumption $|\cmu|\geq 2$, hence, $m\geq 2$).
Since there are no non-constant maps $\BA^{m-1}\to \BG_m$,
the map  $\gamma^{0,\cmu}$ factors through the $\BG_m$-factor.
This implies that the cohomology \eqref{coh on fiber} vanishes
in all degrees.

\medskip

Let us now consider case (2). By \secref{chi univ}, it is enough to
show that $\sM^Y$ itself lives in perverse cohomological degrees
$\geq -\langle \rho,\cmu\rangle+2$.

The equivariance property of $\sM^Y$ against the character sheaf 
$\Psi(c\cdot\mu)$ on $T$, and since $c$ is irrational, implies that 
the complex $\sM^Y$ is the extension by zero from the complement 
to the union of coordinate hyperplanes in $\BA^{r'}$. However, over this
open subset, the fibers of the map $\chi^0_{x,univ}:Y\to \BA^{r'}$
have dimension $\leq \langle \rho,\cmu\rangle-r'$, and we are
done since $r'\geq 2$.









\ssec{}

Let us proceed with the proof of \thmref{F0}(2). We obtain, that the map
$$\pi^\cmu_*\circ {}'\fp^{-,\cmu,\cdot}(\CF_{\emptyset})\to
j_*^{Diag}(\oL^\cmu_\emptyset),$$
resulting from the isomorphism of \thmref{F0}(1), is injective.

\medskip

Since we are dealing with holonomic D-modules, the direct
images with compact supports are well-defined, and by 
Verdier duality, we obtain that the map
$$j_!^{Diag}(\oL^\cmu_\emptyset)\to 
\pi^\cmu_!\circ {}'\fp^{-,\cmu,\cdot}(\CF_{\emptyset})$$
is surjective.

\medskip

Consider the composition
$$j_!^{Diag}(\oL^\cmu_\emptyset)\twoheadrightarrow
\pi^\cmu_!\circ {}'\fp^{-,\cmu,\cdot}(\CF_{\emptyset})\to
\pi^\cmu_*\circ {}'\fp^{-,\cmu,\cdot}(\CF_{\emptyset})\hookrightarrow
j_*^{Diag}(\oL^\cmu_\emptyset),$$
where the middle arrow is the canonical map as in \thmref{cleanness}.
This map restricts to the tautological isomorphism over $\oX^\cmu_\emptyset$;
hence, it is the canonical map over the entire $X^\cmu_\emptyset$.

\medskip

Applying \thmref{cleanness} (which will be proven in the next section),
we deduce that
$$\pi^\cmu_!\circ {}'\fp^{-,\cmu,\cdot}(\CF_{\emptyset})\simeq
j_{!*}^{Diag}(\oL^\cmu_\emptyset)\simeq 
\pi^\cmu_*\circ {}'\fp^{-,\cmu,\cdot}(\CF_{\emptyset}),$$
as required.

\ssec{Proof of \thmref{other sheaves}}   \label{proof of other sheaves}

To simplify the notation, we will assume that $n=1$, i.e., $\ox$ consists of
one point $x$ (and $\ol\cla$ is just one co-weight $\cla$). The proof in the general
case is the same.

Both D-modules:
$$\pi^\cmu_*\circ {}'\fp^{-,\cmu,\cdot}(\CF_{x,\cla,!*})
\text{ and } \CL^\cmu_{x,\cla,!*}$$
are supported on the sub-scheme 
$X^\cmu_{x,\leq \cla}\subset X^\cmu_x$. By \propref{factor of Whit},
the desired isomorphism holds over the open sub-scheme
$\oX^\cmu_{x,\leq \cla}$. 

Moreover, by \thmref{F0}, the isomorphism in question
holds over a larger open sub-scheme: one consisting of divisors
$D=\underset{k}\Sigma\, \cmu_k\cdot y_k+ \cla\cdot x$ with
$y_k\neq x$. Thus, we have to show that the isomorphisms
holds also over this divisor. 

We will argue by induction on $\cla-\cmu$. The case $\cla=\cmu$
is evident. We assume that the assertion is true for all $\cmu'$ with
$|\cla-\cmu'|< |\cla-\cmu|$. Then, by factorization, the two
D-modules appearing in the theorem are isomorphic 
away from the point 
$\cmu\cdot x\overset{i_x^\cmu}\hookrightarrow X^\cmu_{x, \leq \cla}$.
Let us denote the corresponding open embedding 
$X^\cmu_{x,\leq \cla}-\{\cmu\cdot x\}\hookrightarrow X^\cmu_{x,\leq\cla}$
by $j^{pole}$.

\ssec{}   \label{other sheaves fibers}

First, we claim that the !-fiber of 
$\pi^\cmu_*\circ {}'\fp^{-,\cmu,\cdot}(\CF_{\ox,\ol\cla,!*})$
at the above point is concentrated in cohomological degrees $\geq 1$.

Indeed, the above fiber is given (up to a cohomological shift by $|\cla-\cmu|$) by 
\begin{equation} \label{prel fiber}
H^\bullet\left(\CZ^\cmu_{loc,x}, (\CF_{x,\cla,!*})^{\cmu}_{loc,x}\right). 
\end{equation}

\medskip

We need to show that the above cohomology vanishes in degree 
$-|\cla-\cmu|$. This is equivalent to the fact that the D-module
$(\CF_{x,\cla,!*})^{\cmu}_{loc,x}$ 
is non-constant on an open
part of each irreducible component of $\CZ^\cmu_{loc,x}$.

Replacing $\CZ^\cmu_{loc,x}$ by a dense open subset in the
support of the D-modules in question, we are reduced to the study of
$S^\cla\cap S^{-,\cmu}$, and the D-module 
$(\gamma^{\cla,\cmu})^*(\Psi(c))\otimes\chi_x^\cla{}^*(exp)$ on it,
where $\gamma^{\cla,\cmu}$ is as in \secref{an lin bdle}, 
and $\chi^\cla_x$ is the function $S^\cla\to \BG_a$, induced by the character
$\chi_x:N(\CK_x)\to \BG_a$, which is defined up to a shift. 

\medskip

Let us distinguish two cases: If the the function $\chi^\cla_x$ is non-constant
on the given irreducible component, the assertion follows as 
in the proof of \propref{perversity} (see the end of \secref{est from above}).

\medskip

If $\chi^\cla_x$ is constant, the sheaf in question is 
$(\gamma^{\cla,\cmu})^*(\Psi(c))$, and it is non-constant, since
$c$ is irrational, and hence $c\cdot (\cla-\cmu,\cdot)_{Kil}$
is not an integral character of $T$.

\ssec{}

The rest of the proof is similar to that of \thmref{F0}(2). Indeed, we
obtain that the map
$$\pi^\cmu_*\circ {}'\fp^{-,\cmu,\cdot}(\CF_{x,\cla,!*})\to
j_*^{pole}\left(j^{pole}{}^*(\CL^\cmu_{x,\cla,!*})\right)$$
is injective. Dually, we obtain a surjective map
$$j_!^{pole}\left(j^{pole}{}^*(\CL^\cmu_{x,\cla,!*})\right)\to
\pi^\cmu_!\circ {}'\fp^{-,\cmu,\cdot}(\CF_{x,\cla,!*}).$$

The composition
$$j_!^{pole}\left(j^{pole}{}^*(\CL^\cmu_{x,\cla,!*})\right)\to
\pi^\cmu_!\circ {}'\fp^{-,\cmu,\cdot}(\CF_{x,\cla,!})\to
\pi^\cmu_*\circ {}'\fp^{-,\cmu,\cdot}(\CF_{x,\cla,!})\to
j_*^{pole}\left(j^{pole}{}^*(\CL^\cmu_{x,\cla,!*})\right)$$
is the canonical map
$$j_!^{pole}\left(j^{pole}{}^*(\CL^\cmu_{x,\cla,!*})\right)\to
j_*^{pole}\left(j^{pole}{}^*(\CL^\cmu_{x,\cla,!*})\right),$$
because this is so over $X^\cmu_{x,\leq \cla}-\{\cmu\cdot x\}$.

\medskip

Now, \thmref{cleanness} implies the desired isomorphism
$$\pi^\cmu_!\circ {}'\fp^{-,\cmu,\cdot}(\CF_{x,\cla,!*})\simeq
\CL^\cmu_{x,\cla,!*}\simeq 
\pi^\cmu_*\circ {}'\fp^{-,\cmu,\cdot}(\CF_{x,\cla,!*}).$$

\section{Cleanness}   \label{proof of cleanness}

In this section we will prove \thmref{cleanness}.

\ssec{}

We introduce the stack $\BunBmb^\cmu$ (see \cite{BG}, Sect. 1.2), 
whose definition
is the same as that of $\fW_\emptyset$ (with $B$ replaced by $B^-$
and $\kappa^\lambda$ replaced by $\kappa^{\lambda,-}$) and the difference
being that we allow an arbitrary $T$-bundle of degree $(2g-2)\crho-\cmu$.

We have a tautological open embedding $\jmath^-:\BunBm^\cmu\hookrightarrow
\BunBmb^\cmu$, and the maps
$$\Bun^\cmu_T\overset{\fqbm}\longleftarrow  \BunBmb^\cmu
\overset{\fpbm}\longrightarrow \Bun_G,$$
that extend the corresponding maps for $\BunBm^\cmu$. 

Recall the line bundle $\CP_{\Bun_T}$ over $\Bun_T$ and the line bundle
$\CP_{\Bun_G}$ over $\Bun_G$. We let $\CP^T_{\BunBmb^\cmu}$ and
$\CP^G_{\BunBmb^\cmu}$, respectively, denote their pull-backs to 
$\BunBmb^\cmu$. The two are canonically isomorphic over the open
sub-stack $\BunBm^\cmu$. 

\ssec{}

We introduce the compactified Zastava space $\ol\CZ^\cmu_n$ as the open
sub-stack of $\fW_n\underset{\Bun_G}\times \BunBmb^\cmu$ corresponding
to the condition that all the compositions
$$\omega^{\langle\lambda,\crho\rangle}\overset{\kappa^\lambda}
\to \CV^\lambda_{\fF_G}\overset{\kappa^{-,\lambda}}\to \lambda(\fF_T)$$
are non-zero. We let $'\fpbm: \ol\CZ^\cmu_n\to \fW_n$
and $'\fp:\ol\CZ^\cmu_n\to \BunBmb^\cmu$ denote the corresponding base-changed
maps.

\medskip

As in the case of the usual Zastava spaces, we have a natural map
$$\ol\pi^\cmu:\ol\CZ^\cmu_n\to X^\cmu_n,$$
and an analog of \propref{factorization of Zastava} holds (with the same proof):

\begin{equation} \label{factor comp Zastava}
\left(X^{\cmu_1}_\emptyset \times X^{\cmu_2}_n\right)_{disj}
\underset{X^\cmu_n}\times \ol\CZ^\cmu_n\simeq
\left(X^{\cmu_1}_\emptyset \times X^{\cmu_2}_n\right)_{disj}
\underset{(X^{\cmu_1}_\emptyset \times X^{\cmu_2}_n)}\times
\left(\ol\CZ^{\cmu_1}_\emptyset\times \ol\CZ^{\cmu_2}_n\right).
\end{equation}

There is a tautological isomorphism of line bundles:
$$'\fp^*(\CP^T_{\BunBmb^\cmu})^{\otimes -1}\simeq 
(\ol\pi^\cmu)^*(\CP_{X^\cmu_n}).$$
We shall denote by $\fD\mod^c(\ol\CZ_n^\cmu)$ 
the corresponding category of twisted D-modules.

\medskip

Let us denote by $'\jmath^{-}$ the open embedding
$\CZ^\cmu_n\hookrightarrow \ol\CZ^\cmu_n$. For $\CF\in \Whit^c_n$
we can consider ${}'\fp^{-,\cmu,\cdot}(\CF)$ as an object of
$\fD\mod^c(\CZ_n^\cmu)$; this is
due to the identification of the line bundles $\CP^T_{\BunBmb^\cmu}$
and $\CP^G_{\BunBmb^\cmu}$ over $\Bun_{B^-}^\cmu$. We will
deduce \thmref{cleanness} from the next assertion:

\begin{thm} \label{thm cleanness}
Assume that $c$ is irrational. Then for $\CF\in \Whit^c_n$, the object
${}'\fp^{-,\cmu,\cdot}(\CF)$ is 
clean with respect to $'\jmath^{-}$, i.e., the map
$$'\jmath^{-}_!({}'\fp^{-,\cmu,\cdot}(\CF))\to {}'\jmath^{-}_*({}'\fp^{-,\cmu,\cdot}(\CF))$$
is an isomorphism in $\fD\mod^c(\ol\CZ_n^\cmu)$ (in particular, the LHS
is well-defined).
\end{thm}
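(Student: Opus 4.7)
The plan is to prove \thmref{thm cleanness} by reducing it to the auxiliary statement \thmref{other cleanness}, which asserts cleanness of the constant $\CP^T_{\BunBmb^\cmu}$-twisted D-module on $\BunBm^\cmu$ with respect to the open embedding $\BunBm^\cmu \hookrightarrow \BunBmb^\cmu$, and then to prove the auxiliary statement by a direct adaptation of the intersection cohomology calculation of \cite{BFGM}.

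\emph{Reduction step.} The boundary $\ol\CZ^\cmu_n \setminus \CZ^\cmu_n$ is the $'\fp$-preimage of $\BunBmb^\cmu \setminus \BunBm^\cmu$ via the Cartesian square
$$
\begin{CD}
\ol\CZ^\cmu_n @>{'\fp}>> \BunBmb^\cmu \\
@V{'\fpbm}VV @VV{\fpbm}V \\
\fW_n @>{\fp}>> \Bun_G.
\end{CD}
$$
By \propref{factor of Whit} and \thmref{F0}, I may shift $\cmu$ by a large multiple of $-\crho$ and factor the extra points off, so without loss of generality $\langle \alpha,\cmu\rangle < -(2g-2)$ for all positive roots; then $\fp^-$ and its base change $'\fp^-$ are smooth, and ${}'\fp^{-,\cmu,\cdot}(\CF)$ is just $({}'\fp^-)^*(\CF)$ up to a shift. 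By base change along $'\fp$, the $*$-restriction of $'\jmath^-_*({}'\fp^{-,\cmu,\cdot}(\CF))$ to a boundary stratum of $\ol\CZ^\cmu_n$ factors through the $*$-restriction on $\BunBmb^\cmu$ of the constant $\CP^T$-twisted sheaf, which vanishes by \thmref{other cleanness}.

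\emph{Proof of \thmref{other cleanness}.} The boundary of $\BunBmb^\cmu$ is stratified by defect: each stratum is of the form $\BunBm^{\cmu'} \underset{\Bun_T}\times X^{\cmu-\cmu'}_\emptyset$ for $\cmu - \cmu' \in \cLambda^{pos}$ nonzero, embedded into $\BunBmb^\cmu$ in the evident way. By factorization and proper base change along $\BunBm^{\cmu'}$, the $*$-restriction of $\jmath^-_*(\text{const})$ along such a stratum reduces fiberwise to the cohomology of $X^{\cmu-\cmu'}_\emptyset$ with coefficients in the $\CP^{\otimes c}_{X^{\cmu-\cmu'}_\emptyset}$-twisted constant sheaf. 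This is exactly the cohomology whose non-vanishing, for $c$ integral, produces the IC stalks of \cite{BFGM} and is identified there with the weight $\cmu-\cmu'$ part of $U(\cn)$. For $c$ irrational, however, \lemref{fiber of line bundle} tells us that $\CP^{\otimes c}$ has non-integral monodromy along every diagonal divisor, so its restriction to the open configuration part $\oX^{\cmu-\cmu'}_\emptyset$ carries a non-trivial Kummer character against the natural $\BG_m$-scaling action. This forces the relevant cohomology to be supported on the open configuration locus and then vanish by a direct computation, exactly as in the arguments in the proofs of \propref{perversity} and \thmref{F0}(2). Thus the \cite{BFGM} IC stalk is replaced by zero, which is precisely cleanness.

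The principal technical difficulty lies in the reduction step, namely the discrepancy between the two twistings $\CP^G_{\BunBmb^\cmu}$ and $\CP^T_{\BunBmb^\cmu}$: these agree only on $\BunBm^\cmu$, but the natural pull-back $({}'\fpbm)^!(\CF)$ lives with the $\CP^G$-twisting while the ambient category $\fD\mod^c(\ol\CZ^\cmu_n)$ uses the $\CP^T$-twisting. Verifying that the $*$-extension is insensitive to this discrepancy on each defect stratum is the main nontrivial point; by \propref{factor of Whit} this reduces to the case $\cmu = -\check\alpha_\imath$, where the discrepancy is the explicit line bundle computed in the proof of \thmref{F0}(1), and the matter is settled by direct inspection.
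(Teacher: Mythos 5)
Your reduction step contains a genuine gap. You write that "by base change along $'\fp$, the $*$-restriction of $'\jmath^-_*({}'\fp^{-,\cmu,\cdot}(\CF))$ to a boundary stratum of $\ol\CZ^\cmu_n$ factors through the $*$-restriction on $\BunBmb^\cmu$ of the constant $\CP^T$-twisted sheaf." But this is precisely where an argument is missing: the operation $'\jmath^-_*$ is performed on $\ol\CZ^\cmu_n$, and there is no reason in general for the $*$-extension of a box-product $\CF'\underset{\Bun_G}\boxtimes \CF''$ (which is what ${}'\fp^{-,\cmu,\cdot}(\CF)$ amounts to on $\CZ^\cmu_n$) to equal the box-product with the $*$-extension taken on $\BunBmb^\cmu$. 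The natural base-change map $\CF'\underset{\Bun_G}\boxtimes \jmath^-_*(\Const^c)\to {}'\jmath^-_*\bigl(\CF'\underset{\Bun_G}\boxtimes \Const^c\bigr)$ is in general not an isomorphism. The paper's proof supplies exactly the missing ingredient: first it establishes a ULA property (\propref{ULA}, that $\jmath^-_*(\Const^c_{\BunBm^{\cmu'}})$ is ULA over $\Bun_G$ after suitably shrinking the support of $\CF$ and replacing $\cmu$ by a sufficiently dominant-negative $\cmu'$), and then it invokes a general lemma stating that if a D-module clean with respect to an open embedding is moreover ULA over the base, then any box-product with it (over that base) remains clean with respect to the base-changed open embedding. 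Your proposal neither states nor proves this commutation, and the "base change" appeal alone does not suffice.

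Relatedly, your assessment that the principal difficulty is the discrepancy between $\CP^G_{\BunBmb^\cmu}$ and $\CP^T_{\BunBmb^\cmu}$ somewhat misses where the real work is. The paper disposes of the twisting issue rather quickly by introducing the ratio line bundle $\CP^{ratio}_{\BunBmb^\cmu}=\CP^G\otimes(\CP^T)^{\otimes -1}$, noting that the twisting on $\ol\CZ^{\cmu'}_n$ decomposes as $({}'\fpbm)^*(\CP_{\fW_n})\otimes{}'\fp^*(\CP^{ratio}_{\BunBmb^{\cmu'}})$, and then working in the box-product formalism over $\Bun_G$. What actually carries the argument is the ULA input together with the box-product cleanness lemma. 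Your sketch of the auxiliary \thmref{other cleanness} is broadly consistent with the paper's inductive argument via the Zastava space $\CZ^{-,\cnu}$, the contracting $\BG_m$-action, and the non-constancy of $\gamma^{0,\cnu}{}^*(\Psi(c))$ on irreducible components of $S^0\cap S^{-,\cnu}$ (coming from the twisted $T(\CO_x)$-equivariance and irrationality of $c$), but even granting that part, the deduction of \thmref{thm cleanness} is where your argument breaks down.
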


\ssec{Proof of \thmref{cleanness}}

The basic observation is that the map $\ol\pi^\cmu$ is proper. Indeed,
$\ol\CZ^\cmu_n$ is a closed sub-scheme of the corresponding relative
(i.e., Beilinson-Drinfeld) version of the affine Grassmannian over $X^\cmu_n$.

Hence, it remains to notice that
$$\pi^\cmu_!({}'\fp^{-,\cmu,\cdot}(\CF))\simeq \ol\pi^\cmu_!
\left({}'\jmath^{-}_!({}'\fp^{-,\cmu,\cdot}(\CF))\right)
\text{ and }
\pi^\cmu_*({}'\fp^{-,\cmu,\cdot}(\CF))\simeq \ol\pi^\cmu_*\left(
{}'\jmath^{-}_*({}'\fp^{-,\cmu,\cdot}(\CF))\right).$$

\qed

\ssec{}

The rest of this section is devoted to the proof of \thmref{thm cleanness}. First, we 
will establish a cleanness-type result purely on $\BunBmb^\cmu$.

\medskip

Let $\CP^{ratio}_{\BunBmb^\cmu}$ denote the ratio of the two line bundles
$\BunBmb^\cmu$
$$\CP^{ratio}_{\BunBmb^\cmu}:={}^G\CP_{\BunBmb^\cmu}\otimes
\left({}^T\CP_{\BunBmb^\cmu}\right)^{\otimes -1}.$$

We note that the restriction of this line bundle to the open part
$\BunBm^\cmu$ is canonically trivial. 
(In fact, in \cite{BFG}, Theorem 11.6 it was
shown that, after passing from $\BunBmb^\cmu$ to its normalization,
the inverse of the corresponding section of $\CP^{ratio}_{\BunBmb^\cmu}$ is 
regular and its locus of zeroes is $\BunBmb^\cmu-\BunBm^\cmu$.)

We introduce $\fD\mod^{ratio,c}(\BunBmb^\cmu)$ as
the corresponding category of twisted D-modules on $\BunBmb^\cmu$.

\medskip

For a given $c$ let $\Const^c_{\BunBm^\cmu}$ denote canonical "constant"
object of the category $$\fD\mod^{ratio,c}(\BunBm^\cmu)\simeq 
\fD\mod(\BunBm^\cmu).$$ Our main technical tool is the following:

\begin{thm} \label{other cleanness}
For $c$ irrational, the object 
$\Const^c_{\BunBm^\cmu}\in \fD\mod^{ratio,c}(\Bun_{B^-}^\cmu)$ is clean with 
respect to $\jmath^{-}$, i.e., the maps
$$\jmath^{-}_!(\Const^c_{\BunBm^\cmu})\to 
\jmath^{-}_{!*}(\Const^c_{\BunBm^\cmu})\to
\jmath^{-}_*(\Const^c_{\BunBm^\cmu})$$
are isomorphisms in $\fD\mod^{ratio,c}(\BunBmb^\cmu)$.
\end{thm}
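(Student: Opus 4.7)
I follow the structure of the computation of the IC-sheaf of $\BunBmb^\cmu$ in \cite{BFGM}, Section~4, carried out now in the twisted category $\fD\mod^{ratio,c}(\BunBmb^\cmu)$. Recall the defect stratification of $\BunBmb^\cmu$: for each $\theta\in\cLambda^{pos}$ there is a locally closed stratum $_\theta\BunBmb^\cmu$ classifying points whose generalized $B^-$-structure has colored defect of total degree $\theta$, with $_0\BunBmb^\cmu=\BunBm^\cmu$. Each non-open stratum admits a smooth surjection
\begin{equation*}
_\theta\BunBmb^\cmu \;\twoheadrightarrow\; \BunBm^{\cmu-\theta}\times X^{-\theta}_\emptyset,
\end{equation*}
recording the honest $B^-$-bundle (of parameter $\cmu-\theta$) together with the colored effective divisor encoding the defect. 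Since these projections are smooth with connected fibers, the cleanness of $\Const^c_{\BunBm^\cmu}$ at each stratum can be checked after pull-back under the projection.

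The second step is to identify the restriction of the ratio twisting $\CP^{ratio}_{\BunBmb^\cmu}$ to $_\theta\BunBmb^\cmu$. A determinant-of-cohomology computation, completely analogous to \lemref{line bundle on Zastava} and governed by the Killing form of \lemref{fiber of line bundle}, shows that this restriction pulls back from $\CP_{X^{-\theta}_\emptyset}$ under the projection above: over the $\BunBm^{\cmu-\theta}$-factor the pull-backs $\fp^-{}^*\CP_{\Bun_G}$ and $\fq^-{}^*\CP_{\Bun_T}$ cancel because the $B^-$-structure is honest, and all the remaining non-triviality is collected by the configuration space of the defect divisor.

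Given this identification, smooth base change reduces the cleanness of $\Const^c_{\BunBm^\cmu}$ at $_\theta\BunBmb^\cmu$ to the cleanness of the constant object of $\fD\mod^c(\oX^{-\theta}_\emptyset)$ (with respect to the canonical trivialization of $\CP_{X^{-\theta}_\emptyset}$ over $\oX^{-\theta}_\emptyset$ of \secref{intr line bundle}) relative to the open embedding $\oX^{-\theta}_\emptyset\hookrightarrow X^{-\theta}_\emptyset$. Working in local coordinates on $X=\BA^1$, the function $\sff^{-\theta}_\emptyset$ of \secref{old FS} trivializes $\CP_{X^{-\theta}_\emptyset}$, and the constant twisted D-module becomes, after this trivialization, a Kummer-style D-module whose monodromies around every component of the diagonal divisor are rational multiples of $c$ with coefficients $(\check\alpha_\imath,\check\alpha_\imath)_c=2d_\imath c$ and $(\alpha_{\imath_1},\alpha_{\imath_2})_c$, exactly as in \secref{old FS}. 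For $c$ irrational, all these monodromies are non-integer; hence both the $!$- and the $*$-stalks of the resulting D-module vanish on every deeper stratum of $X^{-\theta}_\emptyset$, which is exactly the required cleanness. This is the same mechanism (in greater generality) as the $SL_2$-example recorded right after the definition of $\CL^\cmu_\emptyset$.

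The main obstacle will be the second step, i.e.\ the precise matching of $\CP^{ratio}_{\BunBmb^\cmu}|_{_\theta\BunBmb^\cmu}$ with the pull-back of $\CP_{X^{-\theta}_\emptyset}$. One has to track how the determinants of $R\Gamma(X,\fg_{\fF_G})$ and of $R\Gamma(X,\alpha(\fF_T))$ vary under the degeneration of the $T$-bundle in the $B^-$-structure, extracting the Killing-form normalization of \lemref{fiber of line bundle} from Riemann--Roch on the formal neighborhood of the defect divisor. This is essentially the content of the IC-sheaf calculation in \cite{BFGM}; once it is in place, the Kummer vanishing in the last paragraph is routine and uses the irrationality of $c$ in an essential way.
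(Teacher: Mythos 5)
Your proposal diverges from the paper's proof and, as written, has two genuine gaps.

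\medskip

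\noindent\textbf{First gap: the reduction to a stratum is not a valid computation of a stalk.}
The cleanness you need is a statement about the $*$-stalk of $\jmath^-_*(\Const^c_{\BunBm^\cmu})$ (equivalently the $!$-costalk of $\jmath^-_!$) along each deeper stratum $_\theta\BunBmb^\cmu$. You propose to compute this by using the smooth projection $_\theta\BunBmb^\cmu\to\BunBm^{\cmu-\theta}\times X^{-\theta}_\emptyset$ and matching the twisting on the stratum with the pull-back of $\CP_{X^{-\theta}_\emptyset}$. But this projection is a map defined \emph{on} the stratum; it contains no information about how $\BunBmb^\cmu$ looks \emph{near} the stratum, which is what one needs to compute the stalk of a pushforward along it. There is no ``smooth base change'' that lets one replace this stalk by a stalk of a pushforward over $X^{-\theta}_\emptyset$ via that projection. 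What is actually needed is a transversal model near the stratum, and this is exactly what the Zastava space $\CZ^{-,\cnu}$ supplies in the paper's proof (following \cite{BFGM}, Sect.~3): one reduces the cleanness from $\BunBmb$ to $\CZ^{-,\cnu}$, then uses the affine projection $\pi^{-,\cnu}:\CZ^{-,\cnu}\to X^\cnu$ together with its canonical section $\fs^{-,\cnu}$ and a contracting $\BG_m$-action to identify the $!$-fiber along the section with $\pi^{-,\cnu}_!$, and then applies factorization to reduce to a cohomology computation on $S^0\cap S^{-,\cnu}$. Your proposal skips this reduction entirely, and no substitute mechanism is offered.

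\medskip

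\noindent\textbf{Second gap: the claimed monodromy vanishing is false.}
Even granting some reduction to $\oX^{-\theta}_\emptyset\hookrightarrow X^{-\theta}_\emptyset$, your final step asserts that for $c$ irrational ``all these monodromies are non-integer.'' This is not true: for two simple roots $\alpha_{\imath_1},\alpha_{\imath_2}$ that are orthogonal, i.e.\ not connected by an edge of the Dynkin diagram, one has $(\alpha_{\imath_1},\alpha_{\imath_2})_{Kil}=0$, so the exponent $d_{\imath_1,\imath_2}$ and hence the monodromy around the diagonal $\{t^{\imath_1}_{k_1}=t^{\imath_2}_{k_2}\}$ is an integer (namely $0$), for \emph{any} $c$. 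Consequently the constant object (even with the sign twist) is \emph{not} clean with respect to $\oX^{-\theta}_\emptyset\hookrightarrow X^{-\theta}_\emptyset$ as soon as $G$ has orthogonal simple roots. The non-vanishing the paper actually exploits is of a different nature: after the Zastava reduction, the relevant local system on a component of $S^0\cap S^{-,\cnu}$ is equivariant against the character sheaf $\Psi\bigl(c\cdot(\cnu,\cdot)_{Kil}\bigr)$ of $T(\CO_x)$ (\lemref{torus action}), and the point is that for $\cnu\in\cLambda^{pos}$ with $\cnu\neq 0$ the functional $c\cdot(\cnu,\cdot)_{Kil}$ is a non-integral character of $T$ because $c$ is irrational. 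This uses only that the \emph{total} defect $\cnu$ is nonzero, not that every pairwise pairing is nonzero; it is precisely this ``aggregate'' formulation that makes the argument work for all $G$, and it is not recoverable from the diagonal-by-diagonal monodromy count you propose.

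\medskip

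In short, you are right that the irrationality of $c$ is what saves the day and that the argument should track the Killing-form normalization of the twisting, but the structure of the argument must follow the paper's: pass to the Zastava space as a transversal model, use the contracting $\BG_m$-action and factorization to reduce to a local statement on $S^0\cap S^{-,\cnu}$, and then invoke the $T(\CO_x)$-equivariance against $\Psi(c\cdot(\cnu,\cdot)_{Kil})$ for $\cnu\neq 0$.
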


\ssec{Proof of \thmref{other cleanness}}

We will only sketch the proof, as it follows very closely the IC sheaf
computation in \cite{FFKM} or \cite{BFGM}, Sect. 5.

We will work with all connected components $\BunBmb^\cmu$,
so $\cmu$ will be dropped from the notation. We represent 
$\BunBmb$ as a union of open sub-stacks ${}^{\leq \cnu}\BunBmb$,
where the latter classifies the data $\{\fF_G,\fF_T,(\kappa^{-,\lambda})\}$,
where the length of the quotient $\lambda(\fF_T)/\on{Im}(\kappa^{-,\lambda})$
is $\leq \langle \lambda,\cnu\rangle$. Let ${}^{=\cnu}\BunBmb$ be the
closed sub-stack of ${}^{\leq \cnu}\BunBmb$, when the length of
the above quotient is exactly $\langle \lambda,\cnu\rangle$. 

We have
$$\BunBmb-\BunBm=\underset{\cnu\in \cLambda^{pos}-0}\bigcup\,
{}^{=\cnu}\BunBmb.$$

We will argue by induction and assume that the assertion of the theorem is
valid over $\BunBmb^{\leq \cnu'}$ for all $\cnu'$ with $|\cnu'|<|\cnu|$.

\medskip

Let $\CZ^{-,\cnu}$ be the corresponding Zastava space, i.e.,
an open subset of $\Bun_N\underset{\Bun_G}\times \BunBmb^\cnu$.
Let $\CP^{ratio}_{\CZ^{-,\cnu}}$ be the pull-back of 
$\CP^{ratio}_{\BunBmb^\cnu}$ under the natural projection. We let
$\fD\mod^{ratio,c}(\CZ^{-,\cnu})$ denote the corresponding category of
twisted D-modules. 

Let $\oZ^{-,\cnu}\overset{'\jmath^-}\hookrightarrow\CZ^{-,\cnu}$ be the open sub-scheme equal to the pre-image of 
$\BunBm\overset{\jmath^-}\hookrightarrow \BunBmb$.
One shows as in \cite{BFGM}, Sect. 3, that the problem of extension of the 
twisted D-module
$\Const^c_{\BunBm}$ from $\BunBm$ to $^{\leq \cnu}\BunBmb$ 
is equivalent to that of extension of the corresponding twisted D-module $\Const^c_{\oZ^{-,\cmu}}$
from $\oZ^{-,\cnu}$ to $\CZ^{-,\cnu}$.


\medskip

Consider the projection $\pi^{-,\cnu}:\CZ^{-,\cnu}\to X^\cnu$. Since
the pull-back of $\CP_{\Bun_G}$ to $\Bun_N$ is canonically trivial,
we have an identification of line bundles
$$\CP^{ratio}_{\CZ^{-,\cnu}}\simeq (\pi^{-,\cnu})^*(\CP_{X^\cnu}),$$
where $\CP_{X^\cnu}$ is the canonical line bundle over $X^\cnu$
defined as in \secref{intr line bundle}. In particular, we have a well-defined
direct image functors
$$\pi^{-,\cnu}_!,\pi^{-,\cnu}_*:D(\fD\mod^{ratio,c}(\CZ^{-,\cnu}))\to
D(\fD\mod^{ratio,c}(X^\cnu)).$$

As in \cite{BFGM}, Sect. 2.2.,
the map $\pi^{-,\cnu}$ admits a canonical section, denoted $\fs^{-,\cnu}$,
compatible with the above identification of line bundles. The image of 
$\fs^{-,\cnu}$ is the locus $^{=\cnu}\CZ^{-,\cnu}$ equal to the pre-image of 
$^{=\cnu}\BunBmb\subset \BunBmb$.

\medskip

By the induction hypothesis, the cleanness assertion for $\Const^c_{\oZ^{-,\cnu}}$
with respect to $'\jmath^{-}$ holds  away from the image of the section $\fs^{-,\cnu}$. 
Hence, to prove the theorem, it suffices to show that 
$$(\fs^{-,\cnu})^!\circ ({}'\jmath^{-})_{!*}(\Const^c_{\oZ^{-,\cmu}})=0.$$

However, since the morphism $\pi^{-,\cnu}$ is affine and there exists
a $\BG_m$-action along its fibers that contracts $\CZ^{-,\cnu}$ onto the 
image of $\fs^{-,\cnu}$, with $\Const^c_{\oZ^{-,\cmu}}$ being equivariant,
we have:
$$(\fs^{-,\cnu})^!\circ ({}'\jmath^{-})_{!*}(\Const^c_{\oZ^{-,\cnu}})
\simeq
(\pi^{-,\cnu}_!)\circ ({}'\jmath^{-})_{!*}(\Const^c_{\oZ^{-,\cnu}}):=\sK^{-,\cnu}.$$

\medskip

Again, by the induction hypothesis and factorization, $\sK^{-,\cnu}$
vanishes away from the main diagonal $X\subset X^\cnu$.
Moreover, by the defining property of the Goresky-MacPherson
extension, $\sK^\cnu$ lives in the cohomological degrees $>0$.

Hence, it suffices to see that its *-fiber at every point $x\in X$ lives
in the cohomological degrees $<0$. By base change and the
the induction hypothesis, the fiber in question is given by
$$H_c(S^0\cap S^{-,\cnu}, \Const^c_{\oZ^{-,\cnu}_{loc,x}}).$$

As in the proof of \propref{perversity}, the complex of D-modules
$\Const^c_{\oZ^{-,\cnu}_{loc,x}}$ is isomorphic to the pull-back
of $\Psi(c)$ by means of the function $\gamma^{0,\cnu}$, cohomologically
shifted by $[2|\cnu|]$.

\medskip

Since $\dim(S^0\cap S^{-,\cnu})=|\cnu|$, non-zero cohomology can only exist 
in degrees $\leq 0$ (and from the above we already know that it vanishes for 
degrees strictly less than $0$). 
Hence, it suffices to show that the above cohomology vanishes in degree $0$. 
The latter happens if and only if the lisse D-module, obtained by restricting
$\gamma^{0,\cnu}{}^*(\Psi(c))[2|\cnu|]$ to a smooth open part  
of every irreducible component of $S^0\cap S^{-,\cnu}$, is non-constant.

\medskip

However, from \lemref{torus action}, we know that 
the D-module in question is equivariant with respect to the $T(\CO_x)$-action 
on the scheme $S^0\cap S^{-,\cnu}$ against the character sheaf 
$\Psi(c\cdot \nu)$,
where $\nu\in \Lambda$ is $(\cnu,\cdot)_{Kil}$, and $c\cdot \nu\notin \BZ$ unless
$\cnu=0$, since $c$ is irrational.

\qed

\ssec{}

We shall now show how to deduce \thmref{thm cleanness} from 
\thmref{other cleanness}. We will use the following assertion,
which can be proved by the same argument as \cite{BG}, Sect. 5.3:

\begin{prop}  \label{ULA}
For every open sub-stack of finite type $U\subset \Bun_G$ and $\cnu,\cmu\in \cLambda^{pos}$ there exists $\cmu'\in \cLambda$ with $\cmu\geq \cmu'$,
such that the twisted D-module $\jmath^{-}_*(\Const^c_{\BunBm^{\cmu'}})$, 
restricted to $^{\leq \cnu}\BunBmb^{\cmu'}$ is ULA \footnote{See \cite{BG}, 
Sect. 5.1, for a review of the ULA
property.} with respect to the map $\fpbm$ over $U$.
\end{prop}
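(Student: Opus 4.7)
The plan is to mimic, in the twisted setting, the proof of the analogous ULA statement for the IC sheaf of $\ol{\Bun}_B$ given in \cite{BG}, Sect.~5.3. The principal ingredients are: (a) the ULA property may be tested smooth-locally on both source and target; (b) for $\cmu'$ sufficiently deep, the restriction of $\fpbm: \BunBmb^{\cmu'} \to \Bun_G$ over $U$ admits an explicit local model in terms of the compactified Zastava space $\CZ^{-,\cnu}$; and (c) on this local model, $\jmath^{-}_{*}(\Const^c_{\BunBm^{\cmu'}})$ becomes an external product of a constant D-module and a constructible D-module, for which the ULA property is tautological.

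First I would perform the following reduction. Fix $U$ and $\cnu$. I choose $\cmu'$ with $\cmu \geq \cmu'$ and $-\cmu'$ dominant enough so that (i) $\langle \alpha, \cmu' \rangle < -(2g-2)$ for every $\alpha \in \Delta^+$, hence $\fpbm^-:\BunBm^{\cmu'}\to \Bun_G$ is smooth over $U$, and (ii) over a suitable étale cover $\tilde U \to U$ the bundle $\fF_G|_{\tilde U}$ admits a universal ``background'' reduction to $B^-$ of degree $(2g-2)\crho - (\cmu' - \cnu)$, the obstruction to which lies in an $H^1$ group that one arranges to vanish uniformly over $U$ by pushing $-\cmu'$ sufficiently deep. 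Adding the background reduction onto the Zastava factorization isomorphism \eqref{factor comp Zastava} yields an étale identification
$$\tilde U \times \CZ^{-,\cnu} \;\longrightarrow\; \tilde U \underset{U}{\times} \bigl({}^{\leq \cnu}\BunBmb^{\cmu'} \underset{\Bun_G}{\times} U\bigr),$$
under which the twisted D-module $\jmath^{-}_{*}(\Const^c_{\BunBm^{\cmu'}})$ pulls back to the external product $\uC_{\tilde U} \boxtimes {}'\jmath^{-}_{*}(\Const^c_{\oZ^{-,\cnu}})$. Here the compatibility of the twistings is controlled by \lemref{line bundle on Zastava}, once one separates the $\fq^-$-pull-back of $\CP_{\Bun_T}$ (which contributes the Zastava twisting $\CP_{X^\cnu}$) from the $\fpbm$-pull-back of $\CP_{\Bun_G}$ (which is constant along the $\tilde U$-direction).

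Second, with the local model in hand the ULA property is essentially automatic. The compactified Zastava $\CZ^{-,\cnu}$ is of finite type and ${}'\jmath^{-}_{*}(\Const^c_{\oZ^{-,\cnu}})$ is constructible — in fact by \thmref{other cleanness} it coincides with the clean extension. An external product of a constructible D-module on $\CZ^{-,\cnu}$ with the constant D-module $\uC_{\tilde U}$ is tautologically ULA relative to the projection onto $\tilde U$, so ULA over $U$ follows by smooth descent along $\tilde U \to U$.

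The main obstacle is the construction of the smooth-local product decomposition in the first step: one must show that, for $\cmu'$ sufficiently deep, the defect-$\leq \cnu$ degenerations of $B^-$-reductions over $U$ are genuinely ``separated'' from the $\Bun_G$-direction and absorbed into an independent $\CZ^{-,\cnu}$-factor, with full compatibility of the twistings. The uniform $H^1$-vanishing required for (ii) is the key analytic ingredient, while compatibility at the level of compactifications is essentially a relative version of the argument of \cite{BFGM}, Sect.~3, which identifies the Zastava as the correct local model for such degenerations.
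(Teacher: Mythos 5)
The paper itself gives no proof of \propref{ULA}; it only says the statement ``can be proved by the same argument as \cite{BG}, Sect.~5.3''. Your proposal is explicitly modeled on that reference, so in intent it follows the same route: reduce to a smooth-local product model in which the degenerate direction is captured by a Zastava-type space and the $\Bun_G$-direction is a smooth factor, then observe that ULA for such a product is tautological. The reductions (i), and the observation that the twisting on the Zastava direction is pulled back from the configuration space via \lemref{line bundle on Zastava}, are the right ingredients.

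There is, however, a genuine gap in the central step, namely the claimed \'etale identification
$$\tilde U \times \CZ^{-,\cnu} \longrightarrow \tilde U \underset{U}{\times} \bigl({}^{\leq \cnu}\BunBmb^{\cmu'} \underset{\Bun_G}{\times} U\bigr).$$
As written, this map is not defined. A point of $\CZ^{-,\cnu}$ carries its \emph{own} $G$-bundle $\fF'_G = \fF_N \overset{N}\times G$, together with a degenerate $B^-$-structure supported at a divisor $D\in X^\cnu$; by \propref{nature of Zastava} this $\fF'_G$ is canonically the standard bundle $\omegacrho\overset{T}\times G$ away from $D$, but it bears no relation to the bundle $\fF_G$ coming from $\tilde U$. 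To manufacture a degenerate $B^-$-structure of degree $(2g-2)\crho-\cmu'$ on the \emph{given} $\fF_G$, one would have to glue the background reduction $\fF_{B^-}$ on $X\setminus D$ with the Zastava modification on a formal neighbourhood of $D$. This gluing needs an identification of $\fF_{B^-}$ with the standard $B^-$-bundle on the punctured disc around $D$ — an extra torsor's worth of data that a naive product $\tilde U\times\CZ^{-,\cnu}$ does not supply. The correct smooth-local product decomposition (as in \cite{BFGM}, Sect.~3, or in \cite{BG}, Sect.~5.3 itself) is obtained not by taking a direct product with the global Zastava, but via the modification/gluing construction over a formal disc, and comes with a nontrivial torsor one must trivialize after passing to a further smooth cover of the defect-$\leq\cnu$ locus. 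Once that local model is set up carefully, your remaining steps — the constant--external-product form of $\jmath^-_*(\Const^c)$, the identification of twistings, and the tautological ULA of such a product relative to the projection — go through, and the assertion that the local model can be arranged over $U$ after pushing $\cmu'$ deep enough and appealing to factorization \eqref{factor comp Zastava} is also sound. So the strategy is right, but you need to replace the naive product $\tilde U\times\CZ^{-,\cnu}$ by the honest gluing local model and check that the \'etale/smooth cover used to trivialize the gluing torsor is compatible with the twistings.
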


\medskip

Given an object $\CF\in \Whit_n^c$, let $\ol\cla=(\cla_1,...,\cla_n)$ be the bound on the 
order of the poles of the maps $\kappa^\lambda$ contained in the support of $\CF$ in $\fW_n$.

Consider now the support of
$'\jmath^{-}_*({}'\fp^{-,\cmu,\cdot}(\CF))$ 
on $\ol\CZ^\cmu_n$; denote it by $Y$; this is a stack of finite type.
The image of $Y$ in $\fW_n$ is contained in 
an open sub-stack $^{\leq \check\eta}\fW_{n,\leq \cla}$ of $\fW_{n,\leq \cla}$, where 
the total amount of zeroes of the maps $\kappa^\lambda$ does not exceed 
$\check\eta$. The image of $^{\leq \check\eta}\fW_{n,\leq \cla}$
in $\Bun_G$ under $\fp$ is contained in an open sub-stack of finite type that we 
will denote by $U$. Similarly, the image of $Y$ in $\BunBmb^{\cmu}$ is contained in 
$^{\leq \cnu}\BunBmb^{\cmu}$ for some $\cnu$. 

\medskip

Let us take $\cmu'$ with $\cmu\geq \cmu'$ given by \propref{ULA} for the above 
$U$ and $\cnu$.
Consider the open sub-scheme $^U\ol\CZ^{\cmu'}_n$ of $\ol\CZ^{\cmu'}_n$ equal to 
the pre-image of $U$ under the forgetful map $\CZ^{\cmu'}_n\to \Bun_G$.
We claim that it it sufficient to show that the cleanness
statement holds over $^U\ol\CZ^{\cmu'}_n$. 

Indeed, by factorization (i.e., \eqref{factor comp Zastava}) we can complement
any given point of $\ol\CZ^\cmu_n$ by points in $\oZ^{\cmu_k}_{loc,y_k}$ with
$\cmu-\cmu'=\underset{k}\Sigma\, \cmu_k$, and $y_k$ being away from the
support of the divisor equal to the image of our point under $\ol\pi^\cmu$,
to get a point of $\ol\CZ^{\cmu'}_n$. The image of this new point in
$\fW_n$ will still be contained in $^{\leq \check\eta}\fW_{n,\leq \cla}$; 
and hence its image in $\Bun_G$ will be contained in  $U$.

\medskip

Let us note that the line bundle $'\fp^*(\CP^T_{\BunBmb^{\cmu'}})^{\otimes -1}$ over
$\ol\CZ^{\cmu'}_n$ is the tensor product
$$('\fpbm)^*(\CP_{\fW_n}) \otimes{}'\fp^*(\CP^{ratio}_{\BunBmb^{\cmu'}}).$$
Hence, for $\CF'\in \fD^c\mod(\fW_n)$ and 
$\CF''\in \fD\mod(\BunBmb^{\cmu'})^{ratio,c}$
it makes sense to consider
$$\CF'\underset{\Bun_G}\boxtimes \CF''\in D(\fD\mod^c(\ol\CZ^{\cmu'}_n)).$$

\medskip

Now, the assertion of \thmref{thm cleanness} follows from the following
general statement:

\ssec{}
Let $\CY$ be a smooth scheme (or stack), $f:\CY'\to \CY$ a map,
and $\jmath:\oCY{}'\hookrightarrow \CY'$ and open sub-stack.
Let $\CF\in D(\fD\mod(\oCY{}'))$ be an object which is clean with respect
to $\jmath$, i.e., such that 
$$\jmath_!(\CF)\to \jmath_*(\CF)$$
is an isomorphism. 

Let now $Z\to \CY$ be a map and let $\CL$ be an object of
$D(\fD\mod(Z))$. Set
$$\oZZ{}':=Z\underset{\CY}\times \oCY{}' \text{ and } Z':=Z\underset{\CY}\times \CY',$$
and let $\jmath'$ be the corresponding open embedding
$\oZZ{}'\hookrightarrow Z'$.

\begin{lem}
Assume that $\jmath_*(\CF)$ is ULA with respect to $f$. Then 
$\CL\underset{\CY}\boxtimes \CF\in D(\fD\mod(\oZZ{}'))$
is clean with respect to $\jmath'$, i.e.,
$$\jmath'_!(\CL\underset{\CY}\boxtimes \CF)\to
\jmath'_*(\CL\underset{\CY}\boxtimes \CF)$$
is an isomorphism.
\end{lem}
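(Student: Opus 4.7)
My approach is to compute both sides of the desired isomorphism separately and identify each with an expression of the form $\CL\underset{\CY}\boxtimes(?)$, with $(?)=\jmath_!(\CF)$ and $\jmath_*(\CF)$ respectively; the cleanness of $\CF$ then merges the two answers. Denote by $q:\oZZ{}'\to \oCY{}'$ and $p:Z'\to \CY'$ the vertical arrows in the Cartesian square implicit in the lemma, and by $r:Z'\to Z$ the other projection. By construction, $\CL\underset{\CY}\boxtimes \CF\simeq (r\circ \jmath')^*(\CL)\otimes q^*(\CF)$, up to a twist which I suppress.

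The $!$-extension is essentially formal. Since $\jmath'$ is an open embedding obtained by base change from $\jmath$, the natural map $p^*\jmath_!\to \jmath'_!q^*$ is an isomorphism, as $!$-extension along open embeddings commutes with arbitrary pullback. Combined with the projection formula $\jmath'_!(\jmath'{}^*(r^*(\CL))\otimes -)\simeq r^*(\CL)\otimes \jmath'_!(-)$, this gives $\jmath'_!(\CL\underset{\CY}\boxtimes \CF)\simeq \CL\underset{\CY}\boxtimes \jmath_!(\CF)$, which by cleanness of $\CF$ equals $\CL\underset{\CY}\boxtimes \jmath_*(\CF)$.

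The $*$-extension is where the ULA hypothesis enters, and this is the main difficulty. By the same projection formula, what we need is the base change isomorphism $\jmath'_*q^*(\CF)\simeq p^*\jmath_*(\CF)$: the $*$-pushforward of $\CF$ along $\jmath$, pulled back through $p$, should agree with pushing forward the pulled-back $\CF$. Such commutation is not automatic for $*$-pushforwards along open embeddings. However, the ULA condition on $\jmath_*(\CF)$ with respect to $f:\CY'\to \CY$ is precisely what makes this base change hold: a ULA sheaf is compatible with all base changes along maps to $\CY$, in the sense reviewed in \cite{BG}, Sect. 5.1. This yields $\jmath'_*(\CL\underset{\CY}\boxtimes \CF)\simeq \CL\underset{\CY}\boxtimes \jmath_*(\CF)$, and the canonical comparison morphism $\jmath'_!\to \jmath'_*$ then becomes the required isomorphism under the identifications above. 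The main obstacle, therefore, is isolating the precise reformulation of the ULA condition that supplies this base change step; once in place, the argument is a formal manipulation of open-embedding functoriality.
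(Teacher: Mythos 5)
The paper states this lemma without proof, so there is no argument of record to compare against; I will assess your proposal directly.

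Your architecture is correct: identify each side with $\CL\underset{\CY}\boxtimes\jmath_?(\CF)$ and then invoke cleanness of $\CF$. The $!$-direction is exactly as you say: in the Cartesian square with vertical arrows $q:\oZZ{}'\to\oCY{}'$, $p:Z'\to\CY'$, the base change $\jmath'_!\,q^*\simeq p^*\jmath_!$ holds formally (an object is a $!$-extension along $\jmath'$ iff it is supported on $\oZZ{}'$, and $p^*$ preserves this), and the projection formula finishes.

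The weakness is the sentence ``a ULA sheaf is compatible with all base changes along maps to $\CY$.'' That is not what ULA says, and stated this way it makes the crucial step sound like a black box. What ULA with respect to $f$ actually gives is a comparison between $!$-pullback and $*$-pullback along any map obtained by base change from a map $Z\to\CY$, namely $p^!\jmath_*\CF\simeq p^*\jmath_*\CF\otimes \omega$ for the appropriate relative twist $\omega$. The base change you want should then be \emph{derived} from this, together with the always-true base change $p^!\jmath_*\simeq\jmath'_*\,q^!$ (the Verdier dual of $p^*\jmath_!\simeq\jmath'_!\,q^*$), as follows. Since ULA is local on $\CY'$, the restriction $\CF=\jmath^*\jmath_*\CF$ is ULA with respect to $f\circ\jmath$, so $q^!\CF\simeq q^*\CF\otimes\omega$ with the same twist. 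Combining,
$$p^*\jmath_*\CF\;\simeq\;p^!\jmath_*\CF\otimes\omega^{-1}\;\simeq\;\jmath'_*\,q^!\CF\otimes\omega^{-1}\;\simeq\;\jmath'_*(q^!\CF\otimes\omega^{-1})\;\simeq\;\jmath'_*\,q^*\CF,$$
where the third isomorphism is the projection formula for the open embedding $\jmath'$. This is the base change you asserted; your proof is correct once this chain is put in place of the vague appeal to ULA. Equivalently, one can run the argument by checking that the $*$-stalk of $\jmath'_*\,q^*\CF$ on the boundary $Z'\setminus\oZZ{}'$ vanishes: the chain above reduces this to $i'^*p^*\jmath_*\CF=0$, which follows from $i^*\jmath_*\CF=0$, i.e.\ from cleanness of $\CF$ itself. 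Either version makes explicit exactly where ULA is used and why; I recommend including one of them rather than leaving it as a slogan.
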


\section{Equivalence}   \label{equivalence}

\ssec{}

In this section we shall prove \thmref{main}. Thus, we have 
to construct a functor
$$\sG_n:\Whit^c_n\to \wt{\FS}{}^c_n;$$
show that its image belongs to $\FS^c_n$, and prove that the above functor is
an equivalence.

\medskip

The first step has been essentially carried out already: for $\CF\in \Whit^c_n$,
we set
$$\sG_n(\CF)^\cmu:=\pi^\cmu_*({}'\fp^{-,\cmu,\cdot}(\CF))\in \fD\mod^c(X^\cmu_n).$$
These D-modules satisfy the required factorization property by 
\corref{factor of image of Whit} and \thmref{F0}. The functor $\sG$ is exact
by \propref{perversity}.

\ssec{}

Let us fix (distinct) points $\ox:=(x_1,...,x_n)$, and consider the corresponding functor
$$\sG_\ox:\Whit^c_\ox\to \wt{\FS}{}^c_\ox.$$

We will first prove:
\begin{thm}  \label{equiv at x}
The functor $\sG_\ox$ has its image in $\FS^c_\ox$ and induces an equivalence
with the latter sub-category.
\end{thm}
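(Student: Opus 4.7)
First I would use \thmref{other sheaves} to determine $\sG_\ox$ on irreducibles: it sends $\CF_{\ox,\ol\cla,!*}$, where by \lemref{descr of irr in Whit}(a) the tuple $\ol\cla$ is dominant, to $\CL_{\ox,\ol\cla,!*}$. By \thmref{which objects}(b,ii)--(iii), these are precisely the irreducibles of the semi-simple category $\FS^c_\ox$, so we obtain a bijection on isomorphism classes of irreducibles. To see that the full image of $\sG_\ox$ lies in $\FS^c_\ox$, I would observe that $\FS^c_\ox$ is closed under extensions inside $\wt{\FS}{}^c_\ox$: both the finite-support condition and the singular-support condition of \secref{intr factor sheaves} are stable under extensions of twisted D-modules. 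Since every object of $\Whit^c_\ox$ has finite length by \lemref{descr of irr in Whit} and $\sG_\ox$ is exact by \propref{perversity}, this stability suffices.

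The crux is to establish that $\Whit^c_\ox$ is itself semi-simple for irrational $c$, which amounts to proving that the canonical arrows $\CF_{\ox,\ol\cla,!} \to \CF_{\ox,\ol\cla,!*} \to \CF_{\ox,\ol\cla,*}$ of \lemref{descr of irr in Whit}(b) are isomorphisms. The key input is \thmref{cleanness}, which forces $\sG_\ox$ to essentially commute with Verdier duality. Consequently, $\sG_\ox(\CF_{\ox,\ol\cla,!})$ and $\sG_\ox(\CF_{\ox,\ol\cla,*})$ are mutually Verdier dual objects of $\FS^c_\ox$, each admitting (by exactness of $\sG_\ox$) a filtration with composition factors $\CL_{\ox,\ol\cla{}',!*}$ for $\ol\cla' \leq \ol\cla$ and containing $\CL_{\ox,\ol\cla,!*}$ exactly once. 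I would then adapt the fiber computation of \secref{other sheaves fibers} --- exploiting the $T(\CO_x)$-equivariance of the relevant twisted D-modules on $S^\cla \cap S^{-,\cmu}$ together with the irrationality of $c$ to rule out the putative lower composition factors --- to identify $\sG_\ox(\CF_{\ox,\ol\cla,!}) \simeq \CL_{\ox,\ol\cla,!*} \simeq \sG_\ox(\CF_{\ox,\ol\cla,*})$ outright. Hence $\sG_\ox$ annihilates the cones of \eqref{non-cleanness}; since these cones are iterated extensions of irreducibles $\CF_{\ox,\ol\cla{}',!*}$, each of which is sent by $\sG_\ox$ to a non-zero irreducible of $\FS^c_\ox$, the cones themselves must vanish.

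With semi-simplicity of both $\Whit^c_\ox$ and $\FS^c_\ox$ in hand, together with the bijection of irreducibles from \thmref{other sheaves}, the equivalence follows by a Schur-style argument: $\sG_\ox$ is faithful because it never kills a non-zero object, and full because $\Hom$-spaces between distinct irreducibles vanish on both sides while endomorphisms of each irreducible reduce to the ground field. The main obstacle in this plan is the fiber computation identifying $\sG_\ox(\CF_{\ox,\ol\cla,!})$ with the simple object $\CL_{\ox,\ol\cla,!*}$; the remainder is essentially formal, given \thmref{cleanness}, \thmref{other sheaves}, and the structural properties of $\FS^c_\ox$ recorded in \thmref{which objects}.
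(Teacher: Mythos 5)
Your proposal is essentially correct and follows the same strategy as the paper. Both arguments reduce the theorem to the semi-simplicity of $\Whit^c_\ox$ (equivalently, to the vanishing of the cones in \eqref{non-cleanness}); both rely on \thmref{other sheaves} for the bijection on irreducibles and on \thmref{which objects}(b) for the structure of $\FS^c_\ox$; and both hinge on the same fiber computation from \secref{other sheaves fibers} (namely, that the $!$-fiber of $\sG_\ox(\CF_{\ox,\ol\cla,?})^\cmu$ at $\cmu\cdot \ox$ lives in degrees $\geq 1$, which uses $T(\CO_x)$-equivariance and irrationality of $c$). Your remark that $\FS^c_\ox$ is closed under extensions, combined with finite length and exactness of $\sG_\ox$ from \propref{perversity}, is a correct way to see that the image lands in $\FS^c_\ox$ prior to knowing semi-simplicity; the paper makes the same observation implicitly.

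The one place where the two arguments diverge organizationally is in isolating the supposed extra composition factors. You appeal to the Verdier-duality compatibility of $\sG_\ox$ (a consequence of \thmref{cleanness}), so that $\sG_\ox(\CF_{\ox,\ol\cla,!})$ and $\sG_\ox(\CF_{\ox,\ol\cla,*})$ are mutually dual, and then invoke the fiber computation to kill lower factors. The paper instead proves a freestanding \propref{Whit clean}: it sets $\CF'=\ker(\CF_{\ox,\cla,!}\to\CF_{\ox,\cla,!*})$ and runs an induction on $|\cla-\cmu|$; factorization plus the inductive hypothesis show that $\sG_\ox(\CF')^\cmu$ is a skyscraper at $\cmu\cdot\ox$, and the semi-simplicity of $\FS^c_\ox$ (\thmref{which objects}(b)) splits the short exact sequence $\sG_\ox(\CF')^\cmu\hookrightarrow\sG_\ox(\CF_{\ox,\cla,!})^\cmu\twoheadrightarrow\sG_\ox(\CF_{\ox,\cla,!*})^\cmu$, so that the vanishing of $H^0(i_x^\cmu)^!$ on the middle term, established in \secref{other sheaves fibers}, kills the kernel. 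Your version is conceptually correct but a little looser: the phrase ``adapt the fiber computation \dots to rule out the putative lower composition factors'' is precisely where the paper's induction-plus-factorization step lives, and you should spell out that step to make the argument airtight (in particular to justify that the factor $\CL_{\ox,\ol\cla',!*}$ with $\ol\cla'<\ol\cla$ would have to contribute in degree $0$ to the $!$-fiber at $\ol\cla'\cdot\ox$). The trade-offs are minor: Verdier duality is a slightly cleaner conceptual umbrella, while the paper's split-SES device yields a more mechanical proof once \thmref{which objects} is in hand.
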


\begin{proof}

By \propref{descr of irr in Whit} and \thmref{other sheaves}, we obtain
that $\sG_\ox$ does indeed send $\Whit^c_\ox$ to $\FS^c_\ox$,
is faithful, and defines a bijection on the level of irreducible objects.

Thus, applying \thmref{which objects}(b), we obtain that the assertion
of \thmref{equiv at x} reduces to the statement that the category 
$\Whit^c_\ox$ (with $c$ irrational) is semi-simple. By \lemref{on stratum},
there are no non-trivial self-extensions of
$\CF_{\ox,\ol\cla,!}$. Hence, semi-simplicity of $\Whit^c_\ox$ is
equivalent to the next statement:

\begin{prop}   \label{Whit clean}
For $c$ irrational and any $\ol\cla\in \cLambda^+$, the maps
$$\CF_{\ox,\ol\cla,!}\to \CF_{\ox,\ol\cla,!*}\to \CF_{\ox,\ol\cla,*}$$
are isomorphisms.
\end{prop}

\end{proof}

\ssec{Proof of \propref{Whit clean}}

For simplicity we shall assume that $n=1$, i.e., $\ox=\{x\}$ and $\ol\cla=\{\cla\}$.
The proof in the general case is the same.

\medskip

Let $\CF'$ be the kernel of the map $\CF_{x,\cla,!}\to \CF_{x,\cla,!*}$.
It is sufficient to show that $\sG_\ox(\CF')=0$. For $\cmu\in \cLambda$,
consider the corresponding short exact sequence:
\begin{equation} \label{SOS to split}
0\to \pi^\cmu_*\circ {}'\fp^{-,\cmu,\cdot}(\CF')\to
\pi^\cmu_*\circ {}'\fp^{-,\cmu,\cdot}(\CF_{x,\cla,!})\to
\pi^\cmu_*\circ {}'\fp^{-,\cmu,\cdot}(\CF_{x,\cla,!*})\to 0.
\end{equation}

As in \secref{proof of other sheaves}, we will argue by induction
on $|\cla-\cmu|$ that $\pi^\cmu_*\circ {}'\fp^{-,\cmu,\cdot}(\CF')=0$.
The base of the induction $\cmu=\cla$ trivially holds. The induction
hypothesis and factorization imply that 
$\pi^\cmu_*\circ {}'\fp^{-,\cmu,\cdot}(\CF')$ is supported at the
point $\cmu\cdot x\in X^\cmu_{x,\leq\cla}$. 

\medskip

Let $i^\cmu_x$ denote the embedding of this point into
$X^\cmu_{x,\leq\cla}$. It is sufficient to show that the $0$-th
cohomology of $(i^\cmu_x)^!\left(\pi^\cmu_*\circ {}'\fp^{-,\cmu,\cdot}(\CF')\right)$
vanishes.

However, by \thmref{which objects}(b), the short exact sequence
\eqref{SOS to split} is split. So, it is sufficient to show that the $0$-th
cohomology of 
$(i^\cmu_x)^!\left(\pi^\cmu_*\circ {}'\fp^{-,\cmu,\cdot}(\CF_{x,\cla,!})\right)$
vanishes. But the latter calculation has been performed in 
\secref{other sheaves fibers}.

\qed

\ssec{}

We are now ready to show that the functor $\sG_n$ maps $\Whit^c_{n}$ to 
$\FS^c_{n}$.

\medskip

Recall the categories $\FS^c_{\overset{\circ}n}\subset \wt{\FS}{}^c_{\overset{\circ}n}$,
(see \secref{intr factor sheaves}).
Let $\Whit^c_{\overset{\circ}n}$ be the corresponding Whittaker category
over $\oX^n$. We have a functor
$$\sG_{\overset{\circ}n}:\Whit^c_{\overset{\circ}n}\to \wt{\FS}{}^c_{\overset{\circ}n}:$$

The following results from \thmref{which objects}(b) and
and \thmref{equiv at x}:

\begin{lem} \hfill

\smallskip

\noindent(a) Every object of $\Whit^c_{\overset{\circ}n}$ is isomorphic to a 
direct sum
$\underset{\ol\cla}\oplus\, \CF_{\overset{\circ}n,\ol\cla,!*}\otimes \CM(\ol\cla)$,
where $\CF_{\overset{\circ}n,\ol\cla,!*}$ is a relative (over $\oX^n$) version of
$\CF_{\ox,\ol\cla,!*}$, and $\CM(\ol\cla)$ is a D-module on $\oX^n$.

\smallskip

\noindent(b) Every object of $\FS^c_{\overset{\circ}n}$ is isomorphic to a 
direct sum
$\underset{\ol\cla}\oplus\, \CL_{\overset{\circ}n,\ol\cla,!*}\otimes \CM(\ol\cla)$,
where $\CL_{\overset{\circ}n,\ol\cla,!*}$ is a relative (over $\oX^n$) version of
$\CL_{\ox,\ol\cla,!*}$ and $\CM(\ol\cla)$ is a D-module on $\oX^n$.

\smallskip

\noindent(c) The functor $\sG_{\overset{\circ}n}$ induces an equivalence
\begin{equation} \label{G open}
\Whit^c_{\overset{\circ}n}\simeq \FS^c_{\overset{\circ}n}.
\end{equation}
\end{lem}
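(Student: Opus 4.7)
The plan is to deduce all three parts from the pointwise statements (\thmref{equiv at x} for the Whittaker side, \thmref{which objects}(b)(iii) for the FS side, and \thmref{other sheaves} for the functor), exploiting the fact that over $\oX^n$---the complement of the diagonal---both stratifications are locally trivial enough that the fiberwise arguments transfer to families with no essential change.

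For part (a), I would set up the family versions $\CF_{\overset{\circ}n,\ol\cla,!*}$ as the Goresky--MacPherson extensions from the relative strata of $\fW_n|_{\oX^n}$, paralleling the single-point construction of \secref{Whit at point}. The relative analog of \lemref{on stratum} identifies objects supported on a given relative stratum with the category $\fD\mod(\oX^n)$, which provides the ``$\CM(\ol\cla)$'' factors. To upgrade this stratumwise description to a global direct sum, I would extend \propref{Whit clean} to the family setting: the relative maps $\CF_{\overset{\circ}n,\ol\cla,!} \to \CF_{\overset{\circ}n,\ol\cla,*}$ are isomorphisms because the proof of \propref{Whit clean} proceeded through $\sG$ and reduced to the fiber calculation of \secref{other sheaves fibers}, an argument which is manifestly local over $X^\cmu_n$ thanks to \corref{factor of image of Whit} and hence deforms along $\oX^n$. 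Vanishing of Ext's between $\CF_{\overset{\circ}n,\ol\cla,!*}$ for distinct $\ol\cla$ reduces fiberwise to the semi-simplicity established inside \thmref{equiv at x}. Part (b) is entirely parallel: \thmref{which objects}(b)(iii) gives fiberwise semi-simplicity, while the family form of \lemref{hom computation}---proved by repeating its argument in families---delivers the claimed decomposition of $\FS^c_{\overset{\circ}n}$.

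For part (c), once (a) and (b) are established, the functor $\sG_{\overset{\circ}n}$ is pinned down by its action on simples together with its $\fD\mod(\oX^n)$-linearity. The family version of \thmref{other sheaves}, which follows by the same inductive argument via factorization reducing to the single-point statement, yields the identification $\sG_{\overset{\circ}n}(\CF_{\overset{\circ}n,\ol\cla,!*}) \simeq \CL_{\overset{\circ}n,\ol\cla,!*}$, so $\sG_{\overset{\circ}n}$ matches the decompositions of (a) and (b) term by term, giving the equivalence \eqref{G open}. The only real obstacle is the bookkeeping needed to verify that the fiberwise cleanness, Hom-computations, and identifications of simples all transfer to families; this is where restriction to $\oX^n$ is essential, since the factorization isomorphisms of \propref{factorization of Zastava} and \propref{factor of Whit} apply only to configurations of pairwise distinct points.
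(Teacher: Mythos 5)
Your proposal is essentially the same argument the paper intends: the paper itself disposes of this lemma with a single line, "The following results from \thmref{which objects}(b) and \thmref{equiv at x}," and you supply precisely the missing bookkeeping — the relative analogs of \lemref{on stratum}, \propref{Whit clean}, and \thmref{other sheaves} over $\oX^n$, together with the observation that $\fD\mod(\oX^n)$-linearity of $\sG_{\overset{\circ}n}$ reduces (c) to its effect on the relative simples.

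One small remark on efficiency. In part (a) you list "vanishing of $\Ext$'s between $\CF_{\overset{\circ}n,\ol\cla,!*}$ for distinct $\ol\cla$" as a separate step to verify, but once the family cleanness $\CF_{\overset{\circ}n,\ol\cla,!}\simeq \CF_{\overset{\circ}n,\ol\cla,*}$ is in place, this is automatic: the clean extension from an affine relative stratum is simultaneously projective (being $j_!$ of an object with no self-extensions on its stratum) and injective (being $j_*$), and this already forces the direct-sum decomposition over all $\ol\cla$. The analogous remark applies to part (b), where you do not actually need a family form of \lemref{hom computation}; the family cleanness of the $\CL_{\overset{\circ}n,\ol\cla}$'s, which follows the same way from \thmref{which objects}(b), suffices. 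This is not a gap — the extra verification is merely redundant — but the cleaner route makes it visible that the only genuinely new input beyond the pointwise \thmref{equiv at x} and \thmref{which objects}(b) is the transfer of cleanness to the relative setting, and for that your appeal to the locality of the argument in \secref{other sheaves fibers} via \corref{factor of image of Whit} and \propref{factor of Whit} is exactly the right justification (and is indeed where restriction to $\oX^n$ is essential, as you say).
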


\medskip

For any partition $\ol{n}$: $n=n_1+...+n_k$ we have the mutually adjoint functors
$$(\Delta_{\ol{n}})^!,(\Delta_{\ol{n}})_*:D^+(\Whit^c_n)\leftrightarrows
D^+(\Whit^c_{\overset{\circ}k}),$$
which \footnote{Here again $D(\cdot)$ is understood as the derived category
of the abelian category, but it is easy to see that the above functors
induce the usual functors on the level of underlying twisted D-modules.}
are intertwined by means of $\sG$ with the corresponding functors
$$(\Delta_{\ol{n}})^!,(\Delta_{\ol{n}})_*:D^+(\wt{\FS}{}^c_n)\leftrightarrows
D^+(\wt{\FS}{}^c_{\overset{\circ}k}).$$

\medskip

The commutative diagrams 
$$
\CD
\Whit^c_n @>{(\Delta_{\ol{n}})^!}>> D^+(\Whit^c_{\overset{\circ}k}) \\
@V{\sG_n}VV  @V{\sG_{\overset{\circ}k}}VV \\
\wt{\FS}{}^c_n @>{(\Delta_{\ol{n}})^!}>> D^+(\wt{\FS}{}^c_{\overset{\circ}k})
\endCD
$$
and \eqref{G open} imply that $\sG_n$ indeed sends $\Whit^c_n$ to $\FS^c_n$,
as required.

\ssec{}

Finally, let us show that $\sG_n$ is an equivalence.

Let $\Whit^c_{Diag(n)}\subset \Whit^c_n$ and $\FS^c_{Diag(n)}\subset \FS^c_n$
be the subcategories of objects, consisting of twisted D-modules that
supported over the diagonal divisor $X^{Diag(n)}\subset X^n$. By induction
on $n$ we can assume that $\sG_n$ induces an equivalence
\begin{equation} \label{G diag}
\sG_{Diag(n)}:\Whit^c_{Diag(n)}\simeq \FS^c_{Diag(n)}.
\end{equation}

\medskip

Let $i_{Diag(n)}$ denote the morphism $X^{Diag(n)}\to X^n$. It induces
the functors 
$$(i_{Diag(n)})_*:\Whit^c_{Diag(n)}\to \Whit^c_n \text{ and }
\FS^c_{Diag(n)}\to \FS^c_n,$$
which are intertwined by $\sG$. The same is true for the functors
$$(j^{poles})_*:\Whit^c_{\overset{\circ}n}\to \Whit^c_n \text{ and }
\FS^c_{\overset{\circ}n}\to \FS^c_n.$$

Hence, in order to prove the theorem, it suffices to show the following:
for every $\CF\in \Whit^c_{\overset{\circ}n}$ there exists an inverse family of objects 
$\CF_i\in \Whit^c_{\overset{\circ}n}$ with $(j^{poles})^*(\CF_i)\simeq \CF$, such that for
any $\CF'\in \Whit^c_{Diag(n)}$ the direct limits
\begin{equation} \label{van lim}
\underset{i}{\underset{\longrightarrow}{lim}}\,
\Ext^k_{\Whit^c_n}\left(\CF_i,(i_{Diag(n)})_*(\CF')\right) \text{ and }
\underset{i}{\underset{\longrightarrow}{lim}}\,\Ext^k_{\FS^c_n}
\left(\sG_n\left(\CF_i\right),
\sG_n\left((i_{Diag(n)})_*(\CF')\right)\right)
\end{equation}
vanish for $k=0,1$. Note that in both cases, the corresponding $\Hom$
and $\Ext^1$ can be computed inside the ambient category of D-modules.

\medskip

Consider the pro-object in the category of D-modules on $\fW_n$, given by
$(j^{poles})_!(\CF)$. It is easy to see that it can be represented as 
the limit of an inverse family of objects from $\Whit^c_{\overset{\circ}n}$.
We take $\CF_i$ to be this family. 

\medskip

With this choice, the vanishing of the first limit in \eqref{van lim} is automatic.
The vanishing of the second limit follows from \thmref{cleanness} and 
the $(\pi^\cmu_!,\pi^\cmu{}^!)$ adjunction.

\end{document}